\theoremstyle{plain}
        \newtheorem{theorem}{Theorem}[section]
\newtheorem{definition}[theorem]{Definition}
        \newtheorem{proposition}[theorem]{Proposition}
        \newtheorem{lemma}[theorem]{Lemma}
        \newtheorem{corollary}[theorem]{Corollary}
        \newtheorem{remark}[theorem]{Remark}
\numberwithin{equation}{section}
\numberwithin{equation}{section}
\newcommand \be     {\begin{equation}}
\newcommand \ee     {\end{equation}}
\newcommand \del        \partial
\newcommand \eps        \varepsilon
\newcommand \auth   \textsc
\begin{document}

\title{Complete classification of compact four-manifolds with positive isotropic curvature}
\author{Bing-Long Chen, Siu-Hung Tang,  Xi-Ping Zhu}\maketitle

\begin{abstract}
In this paper, we completely classify all compact 4-manifolds with
positive isotropic curvature. We show that they are diffeomorphic to
$\mathbb{S}^4,$ or $\mathbb{R}\mathbb{P}^4$ or quotients of
$\mathbb{S}^3\times \mathbb{R}$ by a cocompact fixed point free
subgroup of the isometry group of the standard metric of
$\mathbb{S}^3\times \mathbb{R}$ ,  or a connected sum of them.

\end{abstract}

\section{Introduction}
\label{IN-0}

Let $M$ be an n-dimensional Riemannian manifold. Recall that its
curvature operator at $p \in M$ is the self adjoint linear
endomorphism ${\mathcal{R}}: {\wedge^2} T_pM \to {\wedge^2} T_pM$
defined by
$$ <{\mathcal{R}}(X {\wedge} Y), U {\wedge} V> = <Rm(X,Y)V,U>,
\,\,\,\,\,\text{for} \,\,\,\, X,Y,U,V \in T_pM. $$ Here $<\, , \,>$
is the Riemannian metric and $Rm$ is the Riemann curvature tensor on
$M$. The Riemannian metric $<\, , \,>$ can be extended either to a
complex bilinear form $(\, , \,)$ or a Hermitian inner product $<<\,
, \,>>$ on $T_pM \otimes \mathbb{C}$. We extend the curvature
operator to a complex linear map on ${\wedge^2} T_pM \otimes
\mathbb{C}$, also denoted by ${\mathcal{R}}$. Then, to every two
plane $\sigma \subset T_pM \otimes \mathbb{C}$, we can define the
complex sectional curvature $K_{\mathbb{C}}(\sigma)$ by
$$K_{\mathbb{C}}(\sigma) = <<{\mathcal{R}}(Z \wedge W) , Z \wedge W>>$$
where $\{ Z, W \}$ is a unitary basis of $\sigma$ with respect to $
<<\, , \,>>$. We say that $M$ has positive isotropic curvature (PIC
for short) if $K_{\mathbb{C}}(\sigma) > 0$ whenever $\sigma \subset
T_pM \otimes \mathbb{C}$ is a totally isotropic two plane for any $p
\in M$. Here $\sigma$ is totally isotropic if $(Z,Z)= 0$ for any $Z
\in \sigma$. To clarify the meaning of positive isotropic curvature,
we have the following diagram for the relative strength of the
positivity for various notions of curvatures.
$$ \begin{array}{rcccc} {\mathcal{R}} > 0 \Rightarrow &
K_{\mathbb{C}} > 0 \Rightarrow & K > 0 \Rightarrow & Ric > 0
\Rightarrow & R > 0 \\ & \Downarrow & & & \\ \text{pointwise 1/4
pinching}\Rightarrow & \text{PIC}\Rightarrow & R
> 0 & &
\end{array} $$
Here, $K$ is the sectional curvature, i.e the restriction of
$K_{\mathbb{C}}$ on real 2 planes in $T_pM \otimes \mathbb{C}$, $Ric
$ is the Ricci curvature and $R$ is the scalar curvature on $M$. The
pointwise 1/4 pinching condition means that for any $p \in M$, we
have
$$ 1 < \frac{\max \{K(\sigma):\, \text{2 plane} \, \sigma \subset T_pM \} }
{\min \{ K(\sigma):\, \text{2 plane} \, \sigma \subset T_pM \} }
\leq 4. $$

The notion of positive isotropic curvature was introduced in the
paper of Micallef and Moore \cite{MiMo} in 1988 where they
discovered that it can be used to control the stability of minimal
surfaces just as the notion of positive sectional curvature can be
used to control the stability of geodesics. Hence by using minimal
surface theory, they proved

\vspace{0.2cm}

\textbf{Theorem (Micallef-Moore).} \textit{ Let $M$ be a compact
simply connected n-dimensional manifold with positive isotropic
curvature where $n \geq 4$. Then $M$ is homeomorphic to a sphere.}

\vspace{0.2cm}

In view of above diagram, for $n \geq 4$, if $M$ is a compact simply
connected n-dimensional manifold with positive curvature operator or
pointwise 1/4 pinching, then $M$ is homeomorphic to a sphere. The
latter generalizes the famous sphere theorem of Berger and
Klingenberg. It is spectacular that, by using the Ricci flow, it was
proved recently in B\"{o}hm-Wilking \cite{BW} and Brendle-Schoen
\cite{BS1} that a compact n-dimensional simply connected manifold
with positive curvature operator or pointwise 1/4 pinching is indeed
diffeomorphic to the round sphere $S^n$.

In 1997, in a seminal paper \cite{Ha97}, Hamilton initiated the
study of positive isotropic curvature by Ricci flow. In dimension 4,
he first proved that the condition of positive isotropic curvature
is preserved under Ricci flow. Then, under the assumption that there
is no essential incompressible space forms in the manifold, he
developed a theory of Ricci flow with surgery to exploit the
development of singularities in the Ricci flow to recover the
topology of the manifold. Here an incompressible space form $N$ in a
four manifold $M$ is a smooth submanifold diffeomorphic to a
spherical space form $S^3/\Gamma$ such that the inclusion induces an
injection from $\pi_1(N)$ to $\pi_1(M)$. It is essential unless
$\Gamma = 1$ or $\Gamma = {\mathbb{Z}}_2$ and the normal bundle is
unorientable. Hamilton's paper contained some unjustified statements
which were later supplemented by the paper of Chen and Zhu
\cite{CZ05F}. Their main result is

\vspace{0.2cm}

\textbf{Theorem (Hamilton).} \textit{ Let $M$ be a compact four
manifold with no essential incompressible space form. Then $M$
admits a metric with positive isotropic curvature if and only if
it is diffeomorphic to ${\mathbb{S}}^4, {\mathbb{R}}
{\mathbb{P}}^4, {{\mathbb{S}}^3} \times {{\mathbb{S}}^1},
{{\mathbb{S}}^3} \widetilde{\times} {{\mathbb{S}}^1}$(this is the
quotient of ${{\mathbb{S}}^3} \times {{\mathbb{S}}^1}$ by
${\mathbb{Z}}_2$ which acts by reflection and  antipodal map on
the first and second factor respectively), or a connected sum of
them.}

\vspace{0.2cm}

Clearly, each of the manifolds ${\mathbb{S}}^4, {\mathbb{R}}
{\mathbb{P}}^4, {{\mathbb{S}}^3} \times {{\mathbb{S}}^1},
{{\mathbb{S}}^3} \widetilde{\times} {{\mathbb{S}}^1}$ listed in the
above theorem admits a metric with positive isotropic curvature. A
theorem of Micallef and Wang \cite{MW} guarantees that the connected
sum of compact manifolds with positive isotropic curvature also
admits such a metric. Another useful observation is that the
condition of no essential incompressible space form is automatically
satisfied if $\pi_1(M)$ is torsion free, i.e. contains no nontrivial
element of finite order. Indeed, $\Gamma$ in the above definition of
essential incompressible space form must be trivial. So, if the
fundamental group of a compact Riemannian four manifold $M$ with
positive isotropic curvature contains a normal torsion free subgroup
of finite index, then a finite cover of $M$ is diffeomorphic to
${\mathbb{S}}^4, {{\mathbb{S}}^3} \times {{\mathbb{S}}^1}$ or a
connected sum of them. This shows the intimate connection between
the topology and the fundamental group of a compact Riemannian
manifold with positive isotropic curvature, at least in dimension 4.

For dimension greater than 4, it has been proved recently by Brendle
and Schoen \cite{BS1} that the condition of positive isotropic
curvature is preserved under Ricci flow although there is yet no
generalization of the curvature pinching estimates which is crucial
in Hamilton's analysis of \cite{Ha97}. Another interesting result
for higher dimensional Riemannian manifold with positive isotropic
curvature is the result of  Fraser and Wolfson \cite{Fr} \cite {FW}
who proved that the fundamental group of any compact surface of
genus $g \geq 1$ cannot occur as a subgroup of such manifold when
its dimension is greater than 4.

Recently, Schoen \cite{S} proposed the following

\vspace{0.2cm}

\textbf{Conjecture (Schoen).} \textit{For $n \geq 4$, let $M$ be an
n-dimensional compact Riemannian manifold with positive isotropic
curvature. Then a finite cover of $M$ is diffeomorphic to
${\mathbb{S}}^n, {{\mathbb{S}}^{n-1}} \times {{\mathbb{S}}^1}$ or a
connected sum of them. In particular, the fundamental group of $M$
is virtually free.}

\vspace{0.2cm}

The purpose of this paper is to prove the conjecture of Schoen when
$n=4$. Indeed, we obtain a more precise result. In particular, we
know exactly what are the fundamental groups of such manifolds. Our
main result is

\vspace{0.2cm}

\textbf{Main Theorem.} \textit{Let $M$ be a compact 4-dimensional
manifold. Then it admits a metric with
 positive isotropic curvature if and only if it is diffeomorphic to
$\mathbb{S}^4$, $\mathbb{R}\mathbb{P}^4$, $\mathbb{S}^3\times
\mathbb{R}/G$ or a connected sum of them. Here $G$ is a cocompact
fixed point free discrete subgroup of the isometry group of the
standard metric on ${{\mathbb{S}}^3} \times {{\mathbb{R}}}$.}

\vspace{0.2cm}
 We give two immediate corollaries of our Main
Theorem.

\vspace{0.2cm}

\textbf{Corollary 1.} \textit{The conjecture of Schoen is true for
$n=4$.}

\begin{proof}
There is nothing to prove if $M$ is diffeomorphic to
$\mathbb{S}^4$ or $\mathbb{R}\mathbb{P}^4$. So, we may assume that
$M$ is diffeomorphic to $ m \mathbb{R}\mathbb{P}^4 \#
\mathbb{S}^3\times \mathbb{R}/G_1\#\cdots\# \mathbb{S}^3\times
\mathbb{R}/G_k$ for some nonnegative integer $m$ and positive
integer $k$. The fundamental group of $M$ is given by
$$\underbrace{{\mathbb{Z}}_2
\ast \cdots \ast {\mathbb{Z}}_2}_{m \,\text{times}} \ast G_1 \ast
\cdots \ast G_k.$$ Now a cocompact fixed point free discrete
subgroup $G$ of the isometry group of ${{\mathbb{S}}^3} \times
{{\mathbb{R}}}$ is always virtually infinite cyclic. This is
because, by the cocompactness of the action of $G$ on
${{\mathbb{S}}^3} \times {{\mathbb{R}}}$, $G$ always contains an
element $g$ which acts as translation on the second factor and the
infinite cyclic subgroup generated by $g$ must have finite index
as it also acts cocompactly on ${{\mathbb{S}}^3} \times
{{\mathbb{R}}}$. Thus $\pi_1(M)$ is the free products of finite
and virtually infinite cyclic groups. It is known that such group
always contains a normal free subgroup of finite index. In
particular, $\pi_1(M)$ contains a torsion free normal subgroup of
finite index. By the remark after the statement of Hamilton's
Theorem, the conclusion in the conjecture of Schoen holds.
\end{proof}

The second corollary concerns the classification of compact
conformally flat Riemannian four manifolds with positive scalar
curvature. We start with a digression of the geometry of Riemannian
four manifold $M$. In this case, the bundle ${\wedge^2} TM$ has a
decomposition into the direct sum of its self-dual and
anti-self-dual parts
$$ {\wedge^2} TM = {\wedge^2_+} TM \oplus  {\wedge^2_-} TM. $$
The curvature operator can then be decomposed as
$${\mathcal{R}}=\left(\begin{array}{cc}
A&B\\ B^t&C\end{array}\right)$$ where $A = W_+ + \frac{R}{12}$, $B
=\stackrel{\circ}{Ric}$, $C = W_- + \frac{R}{12}$. Here $W_{\pm}$
are the self-dual and anti-self-dual Weyl curvature tensors
respectively while $\stackrel{\circ}{Ric}$ is the trace free part of
the Ricci curvature tensor. Denote the eigenvalues of the matrices
$A,$ $C$ and $\sqrt{BB^{t}}$ by $a_1\leq a_2 \leq
 a_3,$  $c_1\leq c_2 \leq
 c_3,$ $b_1\leq b_2 \leq
 b_3$
 respectively. It is known that the condition of positive isotropic curvature is
 equivalent to the conditions $a_1+a_2>0$ and $c_1+c_2>0.$ From this, it is clear
 that a compact
conformally flat Riemannian four manifold with positive scalar
curvature always has positive isotropic curvature.

Now it had been observed by Izeki \cite{Iz} that a compact
conformally flat Riemannian four manifold $M$ with positive scalar
curvature always has a finite cover which is diffeomorphic to
${\mathbb{S}}^4, {{\mathbb{S}}^{3}} \times {{\mathbb{S}}^1}$ or a
connected sum of them. The reason is this. Let $M$ be  such a
manifold, then by a result of Schoen and Yau \cite{SY}, $\pi_1(M)$
is a Kleinian group. In particular, it is a finitely generated
subgroup of a linear group, namely $SO(5,1)$. By Selberg's Lemma,
$\pi_1(M)$ contains a torsion free normal subgroup of finite index.
Since such manifold always has positive isotropic curvature, we can
again apply the above remark after the statement of Hamilton's
Theorem to conclude that $M$ has a finite cover which is
diffeomorphic to ${\mathbb{S}}^4, {{\mathbb{S}}^{3}} \times
{{\mathbb{S}}^1}$ or a connected sum of them.

Our Main Theorem gives a more precise classification of such
manifolds.

\vspace{0.2cm}

\textbf{Corollary 2.} \textit{A compact  four manifold admits a
metric of positive isotropic curvature if and only if it admits a
conformally flat metric of positive scalar curvature.}

\begin{proof} The manifolds $\mathbb{S}^4$, $\mathbb{R}\mathbb{P}^4$, $\mathbb{S}^3\times
\mathbb{R}/G$ listed in the Main Theorem clearly admit conformally
flat metrics of positive scalar curvature and we only have to invoke
the fact that connected sum of conformally flat Riemannian manifolds
with positive scalar curvature also admits such a metric.
\end{proof}

We remark that corollary 2 does not hold for dimension
  $n>4.$ The following example is taken  from \cite{MW}.  For any Riemann surface $\Sigma_g$ of genus $g\geq 2$
  and $n>4$,
  the manifold $M = \Sigma_g\times \mathbb{S}^{n-2}$ admits
  a conformally flat metric of positive scalar curvature, however,
  because of the above mentioned result of Fraser and Wolfson \cite{FW}, $M$
  cannot admit a metric with positive isotropic curvature.


The proof of our Main Theorem naturally divides into two parts. The
first part is analytical and the second part topological.

Our argument in the first part is based on the celebrated
Hamilton-Perelman theory \cite{Ha97} \cite{P2} on the the Ricci flow
with surgery. To approach the topology of a compact four-manifold
with positive isotropic curvature, we take it as initial data and
evolve it by the Ricci flow. It is easy to see that the solution
will blow up in finite time. By applying Hamilton's curvature
pinching estimates obtained in \cite{Ha97}, we can get a complete
understanding on the part around the singularities of the solution.
Then we can perform Hamilton's surgery procedure to cutoff the part
around the singularities. After the surgery, due to the possible
existence of essential incompressible space forms, we will get a
closed (maybe not connected) orbifold with positive isotropic
curvature. After studying Ricci flow on orbifold and obtaining a
detailed singularity analysis for orbifold Ricci flow, we can use
the orbifold as initial data to run the Ricci flow and to do
surgeries again. By repeating this procedure and extending the
arguments in the previous paper \cite{CZ05F} of the first and the
third authors to the orbifold case, we will be able to show that,
after a finite number of surgeries and discarding a finite number of
pieces which are diffeomorphic to spherical orbifolds
$\mathbb{S}^4/\Delta$ (here $\Delta$ denotes a finite subgroup of
the orthogonal group $O(5)$) with at most isolated orbifold
singularities, the solution becomes extinct. As a result, we prove
that the initial manifold is diffeomorphic to an orbifold connected
sum (see below or the precise definition given in section 2) of
spherical orbifolds $\mathbb{S}^4/\Delta.$

The second part concerns the recovery of the topology of the
manifold from the orbifold connected sum. First of all, by an
algebraic lemma, we know that a spherical orbifold
$\mathbb{S}^4/\Delta$ has either zero, one or two orbifold
singularities. A spherical orbifold with no orbifold singularity
is simply $\mathbb{S}^4$ or $\mathbb{R}\mathbb{P}^4$ while those
with one or two orbifold singularities is, after removing an open
neighborhood from each of its orbifold singularities,
diffeomorphic to a smooth cap or a cylinder respectively. Here a
cylinder $C(\Gamma)$ is given by $\mathbb{S}^3/\Gamma \times
[-1,1]$ for some finite fixed point free subgroup $\Gamma$ of
$SO(4)$ while a smooth cap $C_{\Gamma}^{\sigma}$ is given as the
quotient of $\mathbb{S}^3/\Gamma \times [-1,1]$ by a group of
order two generated by $\hat{\sigma}: (x,s) \mapsto
(\sigma(x),-s)$ where $\sigma$ is a fixed point free isometric
involution on $\mathbb{S}^3/\Gamma$. Now, the orbifold connected
sum of spherical orbifolds is formed in two steps. In the first
step, to undo the surgeries in the Ricci flow which create
orbifold singularities, we glue copies of the $C(\Gamma)$'s and
$C_{\Gamma}^{\sigma}$'s along their diffeomorphic boundaries with
suitable identifying maps to form a number of closed (compact)
manifolds. It is not hard to see that, up to diffeomorphisms, they
are essentially of two types: the self-gluing of the two ends of a
cylinder $C(\Gamma)$ and the gluing of two smooth caps,
$C_{\Gamma}^{\sigma}$ and $C_{\Gamma}^{\sigma'}$, with
diffeomorphic boundaries by suitable diffeomorphisms on
$\mathbb{S}^3/\Gamma$. Since we know that any diffeomorphism on a
three dimensional spherical space form is isotopic to an isometry.
The resulting closed manifolds can be equipped with metrics which
are locally isometric to $\mathbb{S}^3 \times \mathbb{R}$. Now,
the second step in the formation of the orbifold connected sum
consists of two types of operations. The first is the usual
connected sum of the above closed manifolds with $\mathbb{S}^4$'s
and $\mathbb{R}\mathbb{P}^4$'s and the second is adding handles to
them. Since the latter operation is in term equivalent to the
connect sum of them with $\mathbb{S}^3 \times \mathbb{S}^1$ or
${{\mathbb{S}}^3} \widetilde{\times} {{\mathbb{S}}^1}$, our Main
Theorem is proved.

A natural question is whether our Main Theorem and its proof can be
extended to dimension greater than 4. We believe that the analytic
part of our proof will go through once Hamilton's curvature pinching
estimates in \cite{Ha97} can be extended to higher dimensions.
Assuming that this has been done, most of the argument in the
topological part of our proof will also go through. This will allow
us to show that a compact Riemannian n-dimensional manifold $M$ with
positive isotropic curvature is homeomorphic to $\mathbb{S}^n$,
$\mathbb{R}\mathbb{P}^n$, $\mathbb{S}^{n-1}\times \mathbb{R}/G$ or a
connected sum of them. Here we only know that $G$ acts
differentiably on ${{\mathbb{S}}^{n-1}} \times {{\mathbb{R}}}$.
The differences are due to the possible existence of (exotic)
diffeomorphisms on a spherical space form
${\mathbb{S}}^{n-1}/\Gamma$ which is not isotopic to an isometry. By
the same argument as in our proof of Corollary 1, this result still
implies a weaker form of the conjecture of Schoen, namely, $M$ has a
finite cover which is homeomorphic to $\mathbb{S}^n$,
$\mathbb{S}^{n-1}\times \mathbb{S}^1$ or a connected sum of them.

Our paper is organized as follows. In section 2, we introduce some
terminologies and state one of the main results of the paper,
Theorem \ref{t2}, which says that any 4-orbifold with positive
isotropic curvature and with at most isolated singularities is
diffeomorphic to an orbifold connected sum of spherical orbifolds
$\mathbb{S}^4/\Gamma.$ In section 5, we identify these orbifold
connected sums and prove the Main Theorem. The proof of Theorem
\ref{t2}, by Ricci flow, will occupy sections 3 and 4. Section 6
gives the proof of a geometric lemma which is used frequently in the
paper.

{\bf Acknowledgements} The first and the third author are
partially supported by NSFC 10831008 and
 NKBRPC 2006CB805905. The second author is partially supported by
 NSFC 10831008.

\section{Orbifold connected sum }
\label{IN-0} \label{2.1}

We generalize the construction of connected sum of manifolds to
orbifolds with at most isolated singularities. For an orbifold $X,$
$x\in X,$ we use $\Gamma_{x}$ to denote the local uniformization
group at $x,$ namely, there is a open neighborhood $B_x \ni x$ with
smooth boundary which is a quotient $\tilde{B}/\Gamma_x,$ where
$\tilde{B}$ is diffeomorphic to  $ \mathbb{R}^n$ and $\Gamma_{x}$ is
a finite subgroup of linear transformations fixing the origin. Let
$X_1,\cdots, X_p$ be $n-$dimensional orbifolds with at most isolated
orbifold singularities.  Let $x_1,x_1',x_2,x_2',\cdots,x_q' $ be
$2q$ distinct points (not necessarily singular) on $X_1,\cdots, X_p$
such that for each pair $(x_j,x_j'),$ $\Gamma_{x_{j}}$ is conjugate
to $\Gamma_{x_{j}'}$ as linear subgroups. Assume $x_j\in X_{i_{j}}$
and $x_j'\in X_{i_{j}'}$ for $j=1,\cdots,q.$ Let $f_j$ be  a
diffeomorphism from $\partial B_{x_j}$ and $\partial B_{x_j^{'}}$
for $j=1,2,\cdots, q. $ For each $j,$ we remove $x_{j}$ and $x_{j}'$
from the orbifolds, and identify the boundary $\partial B_{x_j}$
with $\partial B_{x_j^{'}}$ by using the diffeomorphism $f_j.$  Let
$f=(f_1,\cdots,f_q).$ We denote the resulting space by
${\#}_{f}(X_1,\cdots, X_p).$ We call it an orbifold connected sum of
$X_1,\cdots, X_p.$ Here we emphasis that the diffeomorphism type of
the resulting orbifold  depends only on the isotopic class of $f.$
 Now we specify our construction to dimension 4.

 One of the main efforts of this paper is to show the
  following:

 \begin{theorem}\label{t2}Let $(M^4,g)$ be a compact 4-dimensional manifold
 or  orbifold with at most isolated singularities with
 positive isotropic curvature.  Then $M^4$ is diffeomorphic to
 an orbifold connected sum  of  a finite
 number of spherical 4-orbifolds $X_1=\mathbb{S}^4/\Gamma_1,\cdots, X_l=\mathbb{S}^4/\Gamma_l,$
 where each $\Gamma_i$ is a finite subgroup of the isometry group, $O(5)$, of the standard metric
 on $\mathbb{S}^4$ so that
 the quotient orbifold $X_i$ has at most isolated singularities.
  \end{theorem}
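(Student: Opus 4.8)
The plan is to run Hamilton's Ricci flow with surgery on the compact four-manifold (or four-orbifold) with positive isotropic curvature and to track how the topology changes at each surgery. First I would recall Hamilton's result from \cite{Ha97} that in dimension four positive isotropic curvature is preserved under the Ricci flow, together with his curvature pinching estimates; combined with the fact that the scalar curvature becomes unbounded in finite time (since $R>0$ forces finite extinction by the usual maximum principle for $\partial_t R = \Delta R + 2|Ric|^2 \geq \Delta R + \tfrac{1}{2}R^2$), this guarantees singularity formation. The heart of the first part is then a singularity analysis: near a first singular time, the pinching estimates force the high-curvature regions to be, up to scaling, either $\varepsilon$-necks modeled on $\mathbb{S}^3 \times \mathbb{R}$, $\varepsilon$-caps, or a few compact pieces that are themselves quotients of the round $\mathbb{S}^4$ or $\mathbb{S}^3 \times \mathbb{S}^1$ geometry. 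This requires extending the canonical neighborhood structure of \cite{CZ05F} (which in turn builds on \cite{Ha97}\cite{P2}) to the orbifold setting, which is why one needs a parallel development of Ricci flow on four-orbifolds with isolated singularities and of $\kappa$-noncollapsing and the canonical neighborhood theorem there.

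Next I would perform Hamilton's surgery: at each surgery time one cuts along a suitable collection of $\varepsilon$-necks near the high-curvature set and caps off the boundary spheres (or spherical space forms) with standard caps, throwing away components that are recognized as spherical orbifolds $\mathbb{S}^4/\Delta$. Because an incompressible space form $\mathbb{S}^3/\Gamma$ may sit inside a neck, cutting there and capping produces an orbifold point with local group $\Gamma$ rather than a smooth cap; hence the post-surgery object is in general a disjoint union of closed four-orbifolds with at most isolated singularities, still carrying positive isotropic curvature after a suitable adjustment of the metric on the surgery caps (this adjustment, again following Hamilton and Chen–Zhu, must be checked to preserve PIC and the pinching, now on orbifolds). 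One then iterates: restart the flow on the new orbifold, flow until the next singular time, surger again. The key structural claim is that each surgery decreases a topological complexity (e.g. the number of prime summands plus the number of orbifold points, or a volume-type monotone quantity in Perelman's style), so that after finitely many surgeries every remaining component becomes extinct; tracing the surgeries backwards expresses $M^4$ as an orbifold connected sum (in the precise sense defined in section \ref{2.1}) of the discarded spherical orbifolds $\mathbb{S}^4/\Gamma_i$, with the gluing maps along the neck boundaries $\mathbb{S}^3/\Gamma_i$ recorded up to isotopy.

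I expect the main obstacle to be the orbifold singularity analysis: establishing $\kappa$-noncollapsing with surgery, the canonical neighborhood theorem, and the curvature pinching estimates for Ricci flow on four-orbifolds with isolated singularities, uniformly over the sequence of surgeries. In the manifold case these are exactly the delicate points of \cite{Ha97}\cite{CZ05F}\cite{P2}, and on orbifolds one must additionally control what happens near an orbifold point: one needs to know that a high-curvature orbifold point still sits in a canonical neighborhood (an orbifold neck $\mathbb{S}^3/\Gamma \times \mathbb{R}$ or an orbifold cap), which requires a classification of $\kappa$-solutions on four-orbifolds together with Hamilton–Ivey-type pinching adapted to the local uniformizing covers. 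A secondary difficulty is bookkeeping: ensuring the complexity actually drops at every surgery (so the process terminates) and that the finitely many discarded pieces are genuinely of the form $\mathbb{S}^4/\Gamma$ with only isolated singularities — this last point uses the rigidity that a compact ancient $\kappa$-solution with PIC on a four-orbifold must be a quotient of the round sphere. Once all of this is in place, the reconstruction of $M^4$ as $\#_f(X_1,\dots,X_l)$ is essentially formal from the definition of orbifold connected sum, and Theorem \ref{t2} follows.
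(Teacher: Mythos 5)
Your overall strategy is the same as the paper's: evolve by Ricci flow, develop the orbifold analogues of the Hamilton--Chen--Zhu singularity and canonical neighborhood theory, perform $\delta$-cutoff surgery along $\delta$-necks $\mathbb{S}^3/\Gamma\times\mathbb{R}$ (gluing back orbifold caps $C_\Gamma$ when $\Gamma\neq 1$), and then reconstruct $M^4$ as an orbifold connected sum by reversing the surgeries. Two points, however, need sharpening.

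First, the termination mechanism you propose is not quite right in one of its two forms. Your suggested ``number of prime summands plus number of orbifold points'' cannot be the monotone quantity: each $\delta$-cutoff surgery on a neck $\mathbb{S}^3/\Gamma\times\mathbb{R}$ with $\Gamma\neq 1$ \emph{creates} two new orbifold points (the tips of the glued caps $C_\Gamma$), so that quantity generically increases. The argument the paper actually uses is your second, hedged alternative: the solution becomes extinct in finite time by the maximum principle for $R$, and on any finite time interval the number of surgeries is finite because each one removes a definite amount of volume while $\kappa$-noncollapsing plus the curvature pinching gives a uniform lower volume bound. You should commit to that version.

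Second, you identify the general concern that one must ``control what happens near an orbifold point,'' but you do not pinpoint the genuinely new difficulty relative to \cite{CZ05F}: in the orbifold setting the canonical neighborhoods are necks and caps modeled on $\mathbb{S}^3/\Gamma\times\mathbb{R}$ with \emph{no a priori bound on} $|\Gamma|$, so unlike the manifold case they can be arbitrarily collapsed. This breaks the direct application of Perelman's Jacobian comparison at small scales, and the paper has to devise a new two-scale argument in the noncollapsing lemma (Lemma \ref{kappan}): at scales not too small relative to the canonical-neighborhood parameter $\tilde r$ one argues as in \cite{CZ05F}, while at small scales one grows a long geometric tube from the given neck out to a neck of scale comparable to $\tilde r$, shows that bigger neck is noncollapsed by the first case, and thereby bounds $|\Gamma|$ and hence the noncollapsing of the original small neck. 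A closely related lifting trick underlies the elliptic estimate and the classification of ancient $\kappa$-orbifold solutions (Proposition \ref{elliptic} and Theorems \ref{t3.4}, \ref{t3.9}): one passes to a global uniformizing cover $\mathbb{S}^4$ or $\mathbb{R}^4$ (built from a strictly convex exhaustion), applies the smooth-manifold results of \cite{CZ05F} there, and descends. Without recognizing this collapsing issue and its resolution, the extension of the canonical neighborhood theory to orbifolds would remain unjustified.
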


  Now, we discuss some natural examples of compact four manifolds
  with positive isotropic curvature.  We will describe their constructions
from  orbifold connected sums by spherical orbifolds.

 In dimension 4, except for $\mathbb{S}^4$ and $\mathbb{R}\mathbb{P}^4,$
 the best known examples of positive isotropic curvature are
$\mathbb{S}^3/ \Gamma\times \mathbb{S}^1,$ where $\Gamma$ is a fixed
point free finite subgroup of $SO(4)$. Clearly $\Gamma$ can also act
isometrically on $\mathbb{S}^4$
 by fixing an axis. The
 orbifold $\mathbb{S}^4/ \Gamma$ has exactly two singulaities
 $P$ and $P'.$ Clearly, if one performs an orbifold connected sum on $\mathbb{S}^4/ \Gamma$ with itself
 by using the identity map as the identifying map, it gives $\mathbb{S}^3/ \Gamma\times
 \mathbb{S}^1.$  If we choose the identifying map $f$ ( in Diff$(\mathbb{S}^3/\Gamma)$)
 in a nontrivial isotopic class, then the connected sum may give some twisted product
of $\mathbb{S}^3/ \Gamma$ and $ \mathbb{S}^1.$  We denote the
 manifold by
$\mathbb{S}^3/ \Gamma{\times}_{f}
 \mathbb{S}^1.$ By \cite{Mc},  the mapping class group of three
 dimensional
spherical space form $\mathbb{S}^3/\Gamma$ is a finite group. So for
each $\Gamma,$ there is only a finite number of diffeomorphism
classes of $\mathbb{S}^3/ \Gamma{\times}_{f}
 \mathbb{S}^1.$ In particular, when $\Gamma=\{1\}$ and $f$ is an
 orientation reversing diffeomorphism, the resulting manifold is
 $\mathbb{S}^3\tilde{\times}\mathbb{S}^1,$ which is the only
 unoriented $\mathbb{S}^3$ bundle over $\mathbb{S}^1.$

    If
 $\mathbb{S}^3/ \Gamma$ admits  a fixed point free
 isometry $\sigma$ satisfying $\sigma^2=1,$ then we can define a reflection $\hat{\sigma}$ on the 4-manifold
 $\mathbb{S}^3/ \Gamma\times \mathbb{R}$ by $\hat{\sigma}(x,s)=(\sigma(x),-s),$ where $x\in \mathbb{S}^3/ \Gamma,
 s\in \mathbb{R}. $
 The quotient $(\mathbb{S}^3/ \Gamma\times \mathbb{R})/\{1,\hat{\sigma}\}$ is a
 smooth four manifold  with neck like end $\mathbb{S}^3/ \Gamma \times
 \mathbb{R}.$ We denote the manifold by
 $C_{\Gamma}^{\sigma}.$ If we think of the sphere  $\mathbb{S}^4$ as
 the compactification of $\mathbb{S}^3/ \Gamma\times
 \mathbb{R}$ by adding two points (north and south poles) at
 infinities of $\mathbb{S}^3/ \Gamma\times
 \mathbb{R}$, we can regard $\Gamma$ and
 $\hat{\sigma}$ as isometries of the standard $\mathbb{S}^4$ in a natural manner. So
 $C_{\Gamma}^{\sigma}$ is diffeomorphic to the smooth manifold
 obtained by removing the unique singularity from $\mathbb{S}^4/\{\Gamma,
 \hat{\sigma}\}.$ We call $C_{\Gamma}^{\sigma}$ smooth cap.

 Given two smooth caps $C_{\Gamma}^{\sigma}$ and $C_{\Gamma'}^{\sigma'},$ if $\Gamma$ is conjugate to
 $\Gamma'$(i.e. there is an  isometry $\gamma$ of $\mathbb{S}^3$ such that $\Gamma=\gamma \Gamma'
 \gamma^{-1}$), we can glue $C_{\Gamma}^{\sigma}$ and $C_{\Gamma'}^{\sigma'}$
  along their boundaries by a diffeomorphism $f:\partial C_{\Gamma}^{\sigma}\rightarrow \partial C_{\Gamma'}^{\sigma'}.$
 Then  we get a smooth manifold and we denote it by
 $C_{\Gamma}^{\sigma}\cup_{f}
  C_{\Gamma'}^{\sigma'}.$   Let  $P,P'$ be the
 singularities of the orbifolds $\mathbb{S}^4/ \{\Gamma,\hat{\sigma}\}$ and
 $\mathbb{S}^4/ \{\Gamma',\hat{\sigma'}\}$ If we resolve these two singularities by orbifold connected sum
 with some diffeomorphism $f$ between the boundaries of a neighborhood of the singular points,
 we get
 $C_{\Gamma}^{\sigma}\cup_f
  C_{\Gamma'}^{\sigma'}$.
A simple example for $\Gamma=\Gamma'=\{1\}$ is
$\mathbb{R}\mathbb{P}^4\#\mathbb{R}\mathbb{P}^4,$ which is a
quotient of ${{\mathbb{S}}^3} \times {{\mathbb{S}}^1}$ by
${\mathbb{Z}}_2$ which acts by  antipodal map and  reflection  on
the first and second factor respectively.

   The proof of theorem \ref{t2} will occupy
sections 3, 4. The method is to use Ricci flow to deform the initial
metric. By developing singularities, Ricci flow allows us to find
the necks connecting these spherical orbifolds. We disconnect these
spherical orbifolds by cutting off the necks between them. Let us
start to consider Ricci flow.

Let $(M^4,g_0)$ be a compact 4-dimensional  orbifold with at most
isolated singularities with
 positive isotropic curvature. We deform the initial metric by the Ricci flow equation:
\begin{equation}\label{3.1}
 \frac{\partial
g}{\partial t}=-2Ric, \ \ \ \ \  g\mid_{t=0}=g_0.
\end{equation}
Since the implicit function theorem or De Turck trick can also be
applied on orbifolds, we have the  short time solution $g(\cdot,t)$
of (\ref{3.1}) (see \cite{Ha}, \cite{Ha82}, \cite{De}). Recall that
as in the introduction, in dimension 4, the curvature operator has
the following decomposition
$${\mathcal{R}}=\left(\begin{array}{cc}
A&B\\ B^t&C\end{array}\right)$$ and we denote the eigenvalues of
matrices $A,$ $C$ and $\sqrt{BB^{t}}$ by $a_1\leq a_2 \leq
 a_3,$  $c_1\leq c_2 \leq
 c_3,$ $b_1\leq b_2 \leq
 b_3$
 respectively.
Since the maximum principle can also be applied on orbifolds, the
positivity of isotropic curvature and improved pinching estimates of
Hamilton are also preserved under the Ricci flow. We have

\begin{lemma} (Theorem B1.1 and Theorem B2.3 of \cite{Ha97})

There exist positive constants $\rho$, $\Lambda,P<+\infty$
depending only on the initial metric, such that the solution to
the Ricci flow (\ref{3.1}) satisfies
\begin{equation}\begin{split}\label{pinch} & a_1+\rho>0
\mbox{ and }c_1+\rho>0, \\
& \max\{a_3,b_3,c_3\}\leq \Lambda (a_1+\rho)
\max\{a_3,b_3,c_3\}\leq
\Lambda (c_1+\rho),\\
& \ \ \ \frac{b_3}{\sqrt{(a_1+\rho)(c_1+\rho)}}\leq
1+\frac{\Lambda e^{Pt}}{\max\{\log\sqrt{(a_1+\rho)(c_1+\rho)},2\}}
\end{split}
\end{equation}
\end{lemma}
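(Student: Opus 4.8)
The plan is to deduce this lemma directly from Hamilton's maximum principle for systems, exactly as in \cite{Ha97}, and then to verify that the argument is unaffected when $M^4$ is an orbifold with isolated singularities rather than a manifold.

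First I would recall that, under the Ricci flow \eqref{3.1}, the curvature operator ${\mathcal R}$, viewed as a section of the bundle of symmetric endomorphisms of $\wedge^2 TM$, satisfies a reaction--diffusion equation of the form
\[
\frac{\partial}{\partial t}{\mathcal R}=\Delta {\mathcal R}+Q({\mathcal R}),
\]
where the reaction term $Q$ is, in terms of the blocks $A,B,C$ of ${\mathcal R}$, an explicit $O(4)$-equivariant quadratic polynomial. Hamilton's tensor maximum principle then asserts: if ${\mathcal Z}$ is a closed convex $O(4)$-invariant subset of the space of $4$-dimensional curvature operators which is preserved by the associated ODE $\frac{d}{dt}{\mathcal R}=Q({\mathcal R})$, then the pointwise condition ${\mathcal R}(\cdot,0)\in{\mathcal Z}$ persists for all $t>0$. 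Hence the whole statement reduces to producing appropriate sets ${\mathcal Z}$ and checking their ODE-invariance.

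Next I would exhibit the three nested families of pinching sets matching the three lines of \eqref{pinch}. For the first line, compactness of $M^4$ supplies a $\rho>0$ with $a_1+\rho>0$ and $c_1+\rho>0$ at $t=0$, and one checks that the corresponding shifted region is ODE-invariant; this is where the preservation of positive isotropic curvature, i.e. of $a_1+a_2>0$ and $c_1+c_2>0$, is used. For the second line, one verifies that, with $\Lambda$ chosen from the initial pinching ratio, the region $\{\max(a_3,b_3,c_3)\le\Lambda(a_1+\rho)\}\cap\{\max(a_3,b_3,c_3)\le\Lambda(c_1+\rho)\}$ is ODE-invariant and contains the initial curvature operator. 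The third, logarithmically improving, estimate requires the delicate pinching set of Theorem B2.3 of \cite{Ha97}, engineered so that $b_3/\sqrt{(a_1+\rho)(c_1+\rho)}$ is driven toward $1$ as the curvature blows up, with $P$ controlling the worst-case exponential rate at which the admissible deviation can grow in time. I would simply quote these verifications --- they are exactly Theorems B1.1 and B2.3 of \cite{Ha97} --- rather than reproduce the lengthy ODE computations.

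Finally, for the orbifold case I would note that the only analytic input used above is the maximum principle, both in its scalar form and in Hamilton's form for systems, on a compact orbifold with isolated singularities, and this reduces to the manifold case via local uniformizing covers. Near an isolated singular point $x$, the metric lifts to a smooth $\Gamma_x$-invariant metric on a ball $\tilde B\cong\mathbb{R}^4$, with $x$ corresponding to the smooth origin, and the orbifold curvature operator lifts to the curvature operator of the lifted metric; thus an extremum of a curvature quantity occurring at an orbifold point is an honest interior extremum upstairs, where the standard argument applies, and its conclusion, being $\Gamma_x$-invariant, descends to $M^4$. Consequently both positive isotropic curvature and Hamilton's pinching estimates are preserved under the orbifold Ricci flow exactly as in \cite{Ha97}. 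The one genuinely hard ingredient, the construction of the pinching set behind the logarithmic estimate, is carried out in \cite{Ha97}; the role of the present lemma is only to record that Hamilton's entire pinching package is inherited by the orbifold flow.
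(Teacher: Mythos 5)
Your proposal matches the paper's approach exactly: the paper offers no independent proof, merely the one-line observation that Hamilton's maximum principle applies on orbifolds (via local uniformizing covers, as in \cite{Ha}) so that the pinching estimates of Theorems B1.1 and B2.3 of \cite{Ha97} carry over verbatim. Your elaboration of the pinching-set structure and the lifting argument near singularities is a correct (and slightly more explicit) account of the same reasoning.
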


So any blowing up limit satisfies  the following restricted
isotropic curvature pinching condition
\begin{equation}\label{ricp}a_3\leq \Lambda a_1,\ \ c_3\leq
\Lambda c_1,\ \ b_3^2\leq a_1c_1. \end{equation}

We can also define the same notion of $\kappa$ non-collapsed for a
scale $r_0$ for solutions to the Ricci flow on orbifolds, namely,
for any space time point $(x_0,t_0)$, the condition that
$|Rm|(x,t)\leq r_0^{-2},$ $\mbox{for all } t\in [t_0-r_0^2,t_0]$ and
$ x\in B_t(x_0,r_0),$ implies $Vol_{t_0}(B_{t_0}(x_0,r_0))\geq
\kappa r_0^4.$ Since integration by parts and log-Sobolev inequality
still hold on closed orbifolds, we can apply the same argument as in
\cite{P1} (Theorem 4.1 of \cite{P1} or see Lemma 2.6.1 and Theorem
3.3.3 of \cite{CaoZ} for the details) to show

\begin{lemma} \label{ka}For any $T>0,$ there is a $\kappa$ depending on $T$
and the initial orbifold metric, such that the smooth solution to
the Ricci flow which exists for $[0,T)$ is $\kappa$ non-collapsed
for scales less than $\sqrt{T}.$
\end{lemma}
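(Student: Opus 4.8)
The plan is to transport Perelman's no-local-collapsing argument \cite{P1} to the orbifold setting essentially verbatim; the only issue to settle is that each analytic ingredient of \cite{P1} remains available on a closed $4$-orbifold with at most isolated singularities. This should cause no trouble: away from its finitely many singular points such an orbifold is an honest Riemannian manifold, near a singular point $x$ one works on the local uniformizing cover $\tilde B/\Gamma_x$ with $\Gamma_x$-invariant data and patches by an orbifold partition of unity, and the singular set has codimension $4\geq 2$, hence zero $W^{1,2}$-capacity, so $C_c^\infty(M\setminus\{\mathrm{sing}\})$ is dense in $W^{1,2}(M)$ and the singular set is invisible to the relevant Sobolev inequalities and to integration by parts.

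First I would set up, on the closed orbifold $(M,g)$, Perelman's entropy $\mathcal{W}(g,f,\tau)=\int_M[\tau(R+|\nabla f|^2)+f-4]\,u\,dV$ with $u=(4\pi\tau)^{-2}e^{-f}$ subject to $\int_M u\,dV=1$, and $\mu(g,\tau)=\inf_f\mathcal{W}(g,f,\tau)$. The logarithmic Sobolev inequality on $(M,g_0)$ follows by pulling back a partition of unity subordinate to orbifold charts, applying the Euclidean log-Sobolev inequality on each uniformizing cover, and using the density just noted; so, exactly as in Lemma 2.6.1 of \cite{CaoZ}, there is $C=C(T,g_0)$ with $\mu(g_0,\tau)\geq -C$ for all $0<\tau\leq 2T$.

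Next I would establish the monotonicity. Given a test function $f$ prescribed at a time $t_0$ with parameter $\tau_0=r_0^2$, I would solve backward on $[0,t_0]$ the conjugate heat equation coupled with $\partial_t\tau=-1$ (so $\tau(t)=r_0^2+t_0-t$); parabolic theory on the local covers, patched together, gives a positive solution $u$ with the integrability needed. Perelman's pointwise computation — whose error term is $2\tau$ times the squared norm of $Ric+\nabla^2 f-\tfrac1{2\tau}g$ and whose other terms form a divergence — is purely local and valid on $M\setminus\{\mathrm{sing}\}$; since all quantities are bounded there and the singular set has measure zero, the divergence integrates to zero, so $\tfrac{d}{dt}\mathcal{W}(g(t),f(t),\tau(t))\geq 0$, and hence $\mathcal{W}(g(t_0),f,r_0^2)\geq \mathcal{W}(g_0,f(0),t_0+r_0^2)\geq \mu(g_0,t_0+r_0^2)$.

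Finally I would run the contradiction argument. If the conclusion failed, then for arbitrarily small $\epsilon>0$ there would be $r_0<\sqrt T$ and $(x_0,t_0)$ with $|Rm|\leq r_0^{-2}$ on $B_t(x_0,r_0)$ for $t\in[t_0-r_0^2,t_0]$ but $Vol_{t_0}(B_{t_0}(x_0,r_0))\leq\epsilon r_0^4$. Into $\mathcal{W}(g(t_0),\cdot,r_0^2)$ I would feed the test function given at time $t_0$ by $e^{-f}=c\phi^2$, with $\phi\equiv 1$ on $B_{t_0}(x_0,r_0/2)$, $\phi$ supported in $B_{t_0}(x_0,r_0)$, $|\nabla\phi|\leq 4/r_0$, and $c$ fixed by $\int_M u\,dV=1$ — everything here is intrinsic to the orbifold, and Perelman's estimate, using only the curvature bound (so $R\leq C r_0^{-2}$) and the gradient bound on $\phi$, gives $\mathcal{W}(g(t_0),f,r_0^2)\leq\log\epsilon+C$ for a universal $C$. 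With the monotonicity and the lower bound of the second paragraph (and $t_0+r_0^2<2T$), this forces $\log\epsilon+C\geq -C(T,g_0)$, which is absurd for $\epsilon$ small; unwinding, one extracts $\kappa=\kappa(T,g_0)>0$ as asserted. The hard part is not geometric: it is the bookkeeping of checking that the Euclidean (hence orbifold) logarithmic Sobolev inequality, the density of compactly supported functions off the singular set, existence and regularity for the conjugate heat flow, the integration by parts in the $\mathcal{W}$-monotonicity, and cutoff functions with controlled gradient all survive on closed orbifolds with isolated singularities — which they do, precisely because such singularities have codimension $\geq 2$ and are uniformized by finite covers.
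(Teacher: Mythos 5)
Your proposal is correct and takes exactly the route the paper indicates: the paper's proof consists of a one-sentence remark that integration by parts and the logarithmic Sobolev inequality persist on closed orbifolds, together with a citation to Perelman's Theorem 4.1 and the exposition in Cao--Zhu (Lemma 2.6.1 and Theorem 3.3.3). You have simply unpacked that citation into its constituent steps (the $\mathcal{W}$-functional setup, the time-zero log-Sobolev bound via uniformizing covers, the monotonicity of $\mathcal{W}$ along the conjugate heat flow, and the cutoff-function contradiction), verifying along the way precisely the two ingredients the paper flags.
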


Since the scalar curvature is strictly positive, it follow from the
standard maximum principle and the evolution equation of the scalar
curvature that the solution must blow up at finite time. As in the
smooth case \cite{CZ05F}, we will show that the geometric structure
at any point with suitably large curvature is close to an ancient
$\kappa-$solution. So it is important to investigate the structures
of any ancient $\kappa-$solutions. This is done in section
\ref{ancient}.

For the convenience of discussion, we need to fix some terminologies
and notations.

In this paper, a (topological) neck is defined to be diffeomorphic
to $\mathbb{S}^3/ \Gamma\times \mathbb{R}.$ Here $\Gamma$ is a
finite fixed point free subgroup of isometries of $\mathbb{S}^3.$
For caps, we define smooth caps consisting of
$C_{\Gamma}^{\sigma}$ and $\mathbb{B}^4$and we define two types of
orbifold caps. The orbifold cap of Type I is obtained by crunching
the boundary $\mathbb{S}^3/ \Gamma\times
 \{0\}$ of $\mathbb{S}^3/ \Gamma\times
 [0,1)$ to a point. We denote it by $C_{\Gamma}.$ By extending the
 action $\Gamma$ to isometric actions of $\mathbb{S}^4,$ it is
 clear  $C_{\Gamma}$ is obtained by removing one singularity from the spherical orbifold  $\mathbb{S}^4/\Gamma.$
To define the orbifold cap of type II, we
first construct certain spherical orbifold in the following manner.
We write the equation of $\mathbb{S}^4$ as ${x_1}^2+\cdots
+{x_5}^2=1,$ then the isometry $(x_1,x_2\cdots, x_5)\rightarrow
(x_1,-x_2\cdots,-x_5)$ has exactly two fixed points $(1,0,0,0,0)$
and $(-1,0,0,0,0)$ with local uniformization group $\mathbb{Z}_2.$
We denote this spherical orbifold by $\mathbb{S}^4/(x,\pm x').$ The
orbifold cap of Type II, denoted  by $\mathbb{S}^4/(x,\pm
x')\backslash
 \bar{\mathbb{B}}^4,$ is obtained by removing a smooth
point from
 spherical orbifold $\mathbb{S}^4/(x,\pm x').$

Roughly speaking, we will show in section \ref{ancient} that either
the ancient $\kappa-$solution is diffeomorphic to a global quotient
$\mathbb{S}^4/\Gamma$ or else it has local structures of necks,
smooth
caps, or orbifold caps of type I or II described in the above. 

  \section{Ancient $\kappa-$solutions on orbifolds}
\label{ancient}

\begin{definition}\label{def1}We say a solution to the Ricci flow  is an
 \textbf{ancient $\kappa-$orbifold solution}  if it is a smooth complete
 nonflat
solution to the Ricci flow on a  four-orbifold with at most
isolated singularities satisfying the following three conditions:

(i) the solution exists on the ancient time interval
$t\in(-\infty,0],$ and

(ii) it has positive isotropic curvature and bounded curvature,
and satisfies the restricted isotropic curvature pinching
condition , \begin{equation}\label{restrictive}a_3\leq \Lambda
a_1,\ \ c_3\leq \Lambda c_1,\ \ b_3^2\leq a_1c_1,\end{equation}

(iii) $\kappa$-noncollapsed on all scales for some $\kappa>0.$
\end{definition} The purpose of this section is to describe the canonical neighborhood structure of
ancient $\kappa-$orbifold solutions.

\subsection{Curvature has null eigenvector}\label{sec3.2.1}


\begin{theorem}\label{t3.4}
Let $(X,g_t)$ be an ancient $\kappa-$orbifold solution defined in
Definition \ref{def1} such that the curvature operator has
nontrivial null eigenvector somewhere. Then we have

(i) if $X$ is smooth manifold, then either $X=
(\mathbb{S}^3/\Gamma)\times \mathbb{R},$ or $X=
C_{\Gamma'}^{\sigma}$ for some fixed point free isometric subgroup
$\Gamma$ or $\Gamma'$ of $\mathbb{S}^3$ and $\sigma$ is an fixed
point free isometry on $\mathbb{S}^3/\Gamma'$ with $\sigma^2 = 1$;

(ii) if $X$ has singularities, then $X$ is diffeomorphic to
$\mathbb{S}^4/(x,\pm x')\setminus \bar{\mathbb{B}}.$ In
particular, $X$ has exactly two singularities.
\end{theorem}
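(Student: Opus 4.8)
The plan is to combine Hamilton's strong maximum principle for the curvature operator with a de~Rham--Berger type holonomy splitting and the restricted isotropic curvature pinching (\ref{restrictive}), in order to identify the developing map of $X$ with the standard shrinking round cylinder $\mathbb{S}^3\times\mathbb{R}$, and then to read off $X$ by classifying the deck group, using $\kappa$-noncollapsing on all scales to rule out the unwanted quotients. The first step is to note that (\ref{restrictive}) together with positive isotropic curvature forces the full curvature operator $\mathcal{R}$ (with blocks $A,B,C$ as in the introduction) to be nonnegative: PIC gives $\mathrm{tr}\,A=\mathrm{tr}\,C=R/4>0$ while $a_3\le\Lambda a_1$ and $c_3\le\Lambda c_1$ force $a_1,c_1\ge0$, and then $b_3^2\le a_1c_1$ makes the Schur complement of $\mathcal{R}$ nonnegative. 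Hence the hypothesis that the curvature operator has a nontrivial null eigenvector somewhere says exactly that $\ker\mathcal{R}\ne0$ at some point. Since the solution is ancient and smooth away from the isolated singular set, Hamilton's strong maximum principle for the reaction--diffusion system satisfied by $\mathcal{R}$ applies on the orbifold and shows that $\ker\mathcal{R}$ has constant positive rank and is a parallel subbundle of $\wedge^2TX$. A parallel subbundle of $\wedge^2$ with $\mathcal{R}\ge0$ yields a holonomy reduction; since a K\"ahler metric violates $a_1+a_2>0$ and a metric with a local flat factor carries an isotropic two-plane of zero complex sectional curvature, the only reduction compatible with PIC on a nonflat four-manifold is $SO(3)\subset SO(4)$. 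Therefore, locally and on the orbifold universal cover, $X$ splits isometrically as a Riemannian product $N^3\times\mathbb{R}$.

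Next I would feed the product structure back into (\ref{restrictive}). In a self-dual/anti-self-dual frame adapted to the splitting one computes directly that, if $\mu_1\le\mu_2\le\mu_3$ are the eigenvalues of the curvature operator $\mathcal{R}_N$ of $N^3$, then $a_1=c_1=\tfrac{\mu_1}{2}$ and $b_3=\tfrac12\max_i|\mu_i|$, so $b_3^2\le a_1c_1$ reads $\max_i\mu_i^2\le\mu_1^2$. Combined with $\mu_1\le\mu_2\le\mu_3$, with $a_1+a_3\ge a_1+a_2>0$ (hence $\mu_1+\mu_3>0$), and with nonflatness, this forces $\mu_1=\mu_2=\mu_3>0$; that is, $N^3$ has constant positive curvature. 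Consequently the developing map identifies the orbifold universal cover of $X$ with the round product $\mathbb{S}^3(\rho_0)\times\mathbb{R}$, the flow is a time-translate of the standard shrinking cylinder, and --- the model being complete and simply connected --- $X=(\mathbb{S}^3(\rho_0)\times\mathbb{R})/G$ for a discrete subgroup $G$ of $\mathrm{Isom}(\mathbb{S}^3\times\mathbb{R})=O(4)\times\mathrm{Isom}(\mathbb{R})$ acting properly discontinuously with at most isolated fixed points; a short computation identifies the local group at any fixed point with $\{\pm\mathrm{Id}\}\cong\mathbb{Z}_2$.

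It then remains to enumerate $G$. Writing elements as $(\tau,\phi)$ with $\tau\in O(4)$ and $\phi\in\mathrm{Isom}(\mathbb{R})$, the $\kappa$-noncollapsing on all scales forbids any nontrivial translation in the image of $G$ in $\mathrm{Isom}(\mathbb{R})$, such an element creating an $\mathbb{S}^1$-factor and thereby collapsing large balls; so that image lies in a two-element reflection group fixing a common point, $X$ is noncompact, and $G=\Gamma\sqcup\tau_0\Gamma$ with $\Gamma\subset O(4)$ finite and at most one reflection coset. The isolated-singularity hypothesis forces every nontrivial element of $\Gamma$ to act freely on $\mathbb{S}^3$, for otherwise its fixed set is a positive-dimensional $\mathbb{S}^k\times\mathbb{R}$; so $\Gamma\subset SO(4)$ is a fixed-point-free finite group, and likewise each reflection element $(\tau,r\mapsto-r)$ either acts freely on $\mathbb{S}^3$ or has a one-dimensional $(+1)$-eigenspace. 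If $G$ acts freely we obtain $X=\mathbb{S}^3/\Gamma\times\mathbb{R}$ when there is no reflection coset, and $X=C_{\Gamma}^{\sigma}$ --- with $\sigma$ the fixed-point-free involution of $\mathbb{S}^3/\Gamma$ induced by the reflection's $O(4)$-part --- otherwise; this is case (i). If $G$ does not act freely, a reflection element with one-dimensional $(+1)$-eigenspace has exactly two fixed points $(\pm e,0)$, each with $\mathbb{Z}_2$ local group, and unwinding the orbit and stabilizer structure identifies $X$ with $\mathbb{S}^4/(x,\pm x')\setminus\bar{\mathbb{B}}$, which has precisely these two singularities; this is case (ii).

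I expect the two delicate points to be, first, carrying the strong maximum principle and the holonomy splitting cleanly across the isolated orbifold singularities --- in particular showing that the parallel null distribution extends through a singular point in such a way that the local group there is exactly $\{\pm\mathrm{Id}\}$ and that $X$ is a good $(\mathrm{Isom}(\mathbb{S}^3\times\mathbb{R}),\,\mathbb{S}^3\times\mathbb{R})$-orbifold, so that the developing-map description is legitimate --- and, second, the lengthy but elementary enumeration of the admissible groups $G$, in particular pinning down that in the non-free case $X$ is exactly the orbifold cap of type II. The genuinely geometric input --- null eigenvector $\Rightarrow$ product $\Rightarrow$ constant curvature of the three-dimensional factor via (\ref{restrictive}) --- is short; everything after it is essentially bookkeeping.
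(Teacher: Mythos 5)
Your overall strategy mirrors the paper's: apply Hamilton's strong maximum principle to extend the null eigenvector to a parallel null distribution, deduce that the universal cover is the round cylinder $\mathbb{S}^3\times\mathbb{R}$, and then analyze the deck group. The splitting step you work out directly (via holonomy reduction, then feeding the product structure into the pinching to force constant curvature of the $\mathbb{S}^3$ factor and to rule out translations), whereas the paper simply cites Lemma~3.2 of \cite{CZ05F} for the same conclusion; your self-contained derivation is a reasonable substitute and the eigenvalue computation $a_1=c_1=\mu_1/2$, $b_3=\mu_3/2$ is correct once $\mathcal{R}\ge0$ is in hand. One small caveat in the holonomy step: when you argue that the only admissible reduction is $SO(3)\subset SO(4)$, you should explicitly dispose of $SO(2)\times SO(2)$ (local product of two surfaces); it is absorbed into your ``Kähler'' case since such a metric is locally Kähler, but it is worth saying so, as otherwise the exhaustion of cases is incomplete.

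The real gap is in your treatment of case (ii), which you dismiss as ``bookkeeping.'' After you restrict $G$ to $\Gamma\sqcup\tau_0\Gamma$ with $\Gamma\subset O(4)$ acting freely on $\mathbb{S}^3$ and $\tau_0=(\sigma_z,r\mapsto -r)$ a non-free reflection, the desired conclusion $X\cong\mathbb{S}^4/(x,\pm x')\setminus\bar{\mathbb{B}}$ requires proving that $\Gamma$ is \emph{trivial} (equivalently, $|G|=2$). This is genuinely not automatic: a priori $\Gamma$ could be any fixed-point-free finite subgroup of $SO(4)$. The paper supplies the missing argument in two steps. First it shows all fixed points lie on a single slice $\mathbb{S}^3\times\{0\}$ and are exactly the two antipodal points $\pm o$ fixed by the unique non-free reflection (two distinct non-free reflections $\sigma_z\neq\sigma_w$ would fix pointwise the great circle where their invariant equators meet, producing a non-isolated singularity). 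Second, letting $\Gamma_0=\{1,\sigma_z\}$, it shows the quotient group $G/\Gamma_0$ acts freely on $(\mathbb{S}^3\times\mathbb{R})/\Gamma_0$, and any nontrivial element $g$ there would swap the two singular points, satisfy $g^2=1$, and hence restrict to a free involution of the $\mathbb{R}\mathbb{P}^2$ formed by the quotient of the invariant equator — which is impossible. Without this step your enumeration does not actually pin down $X$, so it should be filled in rather than left as an expected-to-be-routine computation.
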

\begin{proof} Suppose the curvature operator
has nontrivial null eigenvector somewhere. Then the null
eigenvectors exist everywhere in space time by Hamilton's strong
maximum principle \cite{Ha86}.

Case 1:  $X$ is a smooth manifold.

In this case, it is known from Lemma 3.2 in \cite{CZ05F} that the
universal cover of $X$ is $\mathbb{S}^3\times \mathbb{R}.$ Let
$\Gamma$ be the group of deck transformations.
 We claim that the second components (acting on $\mathbb{R}$ isometrically) of $\Gamma$ must
contain no translations. Otherwise $X$ is compact. Note that the
flat $\mathbb{R}$ factor does not move during the Ricci flow, and
the spherical factor becomes very large when time goes to $-\infty$.
This contradicts with the $\kappa-$noncollapsing assumption. Let
$\Gamma=\Gamma^{0}\cup \Gamma^{1} $ where the second components of
$\Gamma^0$ and $\Gamma^1$ act on $\mathbb{R}$ as an identity or
reflection respectively. If $\Gamma^1$ is empty, $X=
(\mathbb{S}^3/\Gamma)\times \mathbb{R},$   where $\Gamma$ acts on
 $\mathbb{S}^3$ isometrically and has no fixed point. If $\Gamma^1$ is not empty, by
 picking
$\sigma\in \Gamma_{x}^1,$ then it satisfies $\sigma^2\in \Gamma^{0}$
and $\sigma \Gamma^{0}=\Gamma^1.$ It is clear that $X$ is obtained
by taking quotient of $ (\mathbb{S}^3/\Gamma^{0})\times \mathbb{R}$
by $\sigma.$ Hence $X=C_{\Gamma^{0}}^{\sigma}$ by using our notation
in section \ref{2.1}.

We remark that the $(\mathbb{S}^3/\Gamma)\times \mathbb{R}$ has
two ends, but $C_{\Gamma^{0}}^{\sigma}$ has only one end.

Case 2: $X$ is an orbifold with nonempty isolated singularities.

Since $X$ has local geometry of model $\mathbb{S}^3\times
\mathbb{R},$  $X$ must be a global quotient of $\mathbb{S}^3\times
\mathbb{R}$ by \cite{Th}, namely, $X=\mathbb{S}^3\times
\mathbb{R}/\Gamma,$ where $\Gamma$ is a subgroup of standard
isometries of $\mathbb{S}^3\times \mathbb{R}.$

Note that the fixed points of $\Gamma$ are isolated. For fixed
point $z\in  \mathbb{S}^3\times \mathbb{R},$  denote
$\Gamma_z=\{\gamma(z)=z,\gamma\in \Gamma\}.$ Let $\Gamma_0$ be the
minimal subgroup of $\Gamma$ containing all $\Gamma_{z}.$ Then
$\Gamma_0$ is a normal subgroup of $\Gamma.$ We claim the action
of  $G=\Gamma/\Gamma_0$ on $\mathbb{S}^3\times
\mathbb{R}/\Gamma_0$ has no fixed point. Indeed, if there are some
$g\in \Gamma$ and $x\in  \mathbb{S}^3\times \mathbb{R}$ such that
$g\Gamma_0(x)=\Gamma_0(x),$ this will imply $gx=\gamma x$ for some
$\gamma\in \Gamma_0.$ Hence $\gamma^{-1}g\in \Gamma_x\subset
\Gamma_0$ and $g\in \Gamma_0.$

 Pick  $\Gamma_{z}\neq \{1\}.$ Let
$\Gamma_{z}=\Gamma_{z}^0\cup \Gamma_{z}^1$ where the $\mathbb{R}$
components of $\Gamma_{z}^0$ and $\Gamma_{z}^1$ act on
$\mathbb{R}$ as an identity or reflection separately. We assume
 $z=(0,o)$ where $0\in \mathbb{R}$
and $o\in \mathbb{S}^3.$ Since $(0,o)$ is the unique fixed point
of each $\gamma\in \Gamma_{z},$ this implies
$\Gamma_{z}^0=\{1\}$(otherwise a nontrivial element of
$\Gamma_{z}^0$ will fix the whole $\{o\}\times \mathbb{R} $), and
$\Gamma_{z}^{1}=\{\sigma_{z}\},$  where the $\mathbb{S}^3$
component of $\sigma_{z}$ acts antipodally on the geodesic spheres
(isometric to scalings $\mathbb{S}^2$) of $\mathbb{S}^3$ at $o,$
since $\sigma_{z}$ has no fixed point on the geodesic spheres.

Note the $\mathbb{R}$ components of $\Gamma$ must contain no
translations. If we would have  two elements in $\Gamma$ whose
$\mathbb{R}$ components reflecting around points with different
$\mathbb{R}$ coordinates, then this will produce an element in
$\Gamma$ with nontrivial translation on  $\mathbb{R}$ factor. This
particularly implies that the all fixed points of $\Gamma$ have
same $\mathbb{R}$ coordinates. We may assume these fixed points
lie in $ \mathbb{S}^3\times \{0\}.$ We denote their reflections by
$\sigma_{z},\sigma_{w},\cdots.$ We can associate an equator (which
is the unique invariant equator)  to a $\sigma_z$ in an obvious
way.  Note the action $\sigma_{z}\sigma_{w}$ on $\mathbb{R}$ is
trivial. If $\sigma_{x}\neq\sigma_{y},$ then
$\sigma_{z}\sigma_{w}=id$ on the great circle $C$ defined by the
intersection of their equators, this implies
$\sigma_{z}\sigma_{w}$ fixes every point of $\mathbb{R}\times C,$
the contradiction shows that there are exactly two fixed points
$A,B$ of $\Gamma$ lying antipodally on $\mathbb{S}^3.$

Let $G=\Gamma/\Gamma_0$. We claim $G=1.$ Indeed, if $G\neq 1,$ we
pick $1\neq g\in G,$ then $g$ is a fixed point free isometry of
$(\mathbb{R}\times \mathbb{S}^3)/\Gamma_0.$ We must have $g(A)=B$
and $g(B)=A$ and then $g^{2}=1.$  Since $g$ sends geodesics
connecting $A$ to $B$ to geodesics connecting $B$ to $A,$ this
implies $g$ sends the $\mathbb{R}\mathbb{P}^2$ (as quotient of
equator by $\sigma_{z}$ in the above) to itself without fix
points. This is impossible. So we have showed $G$ is trivial. That
means $( \mathbb{S}^3\times \mathbb{R})/\Gamma_0= X.$  By using
our notation in section \ref{2.1},  $X$ is diffeomorphic to
$\mathbb{S}^4/(x,\pm x')\backslash \bar{\mathbb{B}}^4.$ We note
that in this case $X$ has only one end, which is diffeomorphic to
$\mathbb{S}^3\times \mathbb{R}.$
\end{proof}


\subsection{Positive curvature operator case} In this section, we
investigate the canonical neighborhood structure for all cases of
the ancient $\kappa-$orbifold solution.
 If the orbifold admits no singularity,
this has been done in Theorem 3.8 in \cite{CZ05F}. We recall

\begin{theorem}(Theorem 3.8 in \cite{CZ05F})
 For every $\epsilon>0$ one can find positive constants
$C_1=C_1(\epsilon)$, $C_2=C_2(\epsilon)$ such that for each point
$(x,t)$ in every four-dimensional ancient
$\kappa$-\textbf{manifold} solution  (for some $\kappa>0$) with
restricted isotropic curvature pinching and with \textbf{positive}
curvature operator, there is a radius $r$,
$0<r<C_1(R(x,t))^{-\frac{1}{2}}$, so that some open neighborhood
$B_t(x,r)\subset B\subset B_t(x,2r)$ falls into one of the
following three categories:

(a) $B$ is an \textbf{evolving $\epsilon$-neck} (in the sense that
it is the time slice at time $t$ of the parabolic region
$\{(x',t')|x'\in B, t'\in [t-\epsilon^{-2} R(x,t)^{-1},t]\}$ which
 is, after
scaling with factor $R(x,t)$ and shifting the time $t$ to $0$,
$\epsilon$-close (in $C^{[\epsilon^{-1}]}$ topology) to the
 subset $(\mathbb{I}\times \mathbb{S}^{3})\times [-\epsilon^{-2},0]$
 of the evolving round cylinder
$\mathbb{R}\times \mathbb{S}^3$, having scalar curvature one and
length $2\epsilon^{-1}$ to $\mathbb{I}$ at time zero, or

(b) $B$ is an $\textbf{evolving $\epsilon$-cap}$ (in the sense
that it is the time slice at the time $t$ of an evolving metric on
open $\mathbb{B}^4$ or $\mathbb{RP}^4\setminus
\overline{\mathbb{B}^4}$ such that the region outside some
suitable compact subset of $\mathbb{B}^4$ or
$\mathbb{RP}^4\setminus \overline{\mathbb{B}^4}$ is an evolving
$\epsilon$-neck), or

(c) $B$ is a compact manifold (without boundary) with positive
curvature operator
(thus it is diffeomorphic to $\mathbb{S}^4$ or $\mathbb{RP}^4$); \\
furthermore, the scalar curvature of the ancient $\kappa$-solution
in $B$ at time $t$ is between $C^{-1}_2R(x,t)$ and $C_2R(x,t).$

\end{theorem}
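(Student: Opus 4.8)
The plan is to follow Perelman's analysis of three-dimensional ancient $\kappa$-solutions \cite{P1} and adapt every step to dimension four using the restricted isotropic curvature pinching \eqref{restrictive}, which is exactly how it is carried out in \cite{CZ05F}. The first step is to set up the compactness machinery for the class of four-dimensional ancient $\kappa$-solutions with positive curvature operator and pinching \eqref{restrictive}. Because Hamilton's pinching controls the full curvature operator in terms of the scalar curvature, Perelman's ``curvature bounded at bounded distance'' argument applies: any scalar-curvature-normalized pointed sequence of such solutions has uniformly bounded curvature on metric balls of bounded radius. The $\kappa$-noncollapsing hypothesis then gives a uniform lower bound on the injectivity radius, Shi's local derivative estimates supply all higher derivative bounds, and Hamilton's compactness theorem produces a smooth limiting ancient $\kappa$-solution still satisfying \eqref{restrictive}. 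A standard argument by contradiction with this compactness yields the two a priori ``elliptic-type'' estimates $|\nabla R|\le\eta R^{3/2}$ and $|\partial_t R|\le\eta R^2$ for a universal $\eta>0$, together with a universal $\kappa_0$, so that the constants below do not depend on $\kappa$. The last assertion of the theorem, that the scalar curvature on $B$ lies between $C_2^{-1}R(x,t)$ and $C_2R(x,t)$, is then immediate: integrating the gradient bound over a ball of radius comparable to $R(x,t)^{-1/2}$ gives it.

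The second step is to identify the asymptotic shrinking soliton. Using the monotonicity of Perelman's reduced volume based at a fixed space-time point, one chooses times $t_k\to-\infty$, rescales so that the reduced distance to the base stays bounded at the new basepoints, and extracts a limit that is a non-flat, $\kappa$-noncollapsed gradient shrinking Ricci soliton still satisfying \eqref{restrictive}, hence of nonnegative curvature operator. This soliton is then classified. If its curvature operator has a nontrivial kernel somewhere, Hamilton's strong maximum principle \cite{Ha86} forces the holonomy to reduce and the universal cover to split off a Euclidean factor isometrically; \eqref{restrictive} keeps that factor one-dimensional and the complementary factor a three-dimensional shrinking soliton with positive curvature operator, i.e.\ the round $\mathbb{S}^3$, so the soliton is $\mathbb{S}^3\times\mathbb{R}$ or its $\mathbb{Z}_2$-quotient. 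If instead the curvature operator is everywhere positive, then in the compact case Hamilton's theorem on four-manifolds with positive curvature operator \cite{Ha86} (or B\"ohm--Wilking \cite{BW}) together with the soliton equation identifies it with the round $\mathbb{S}^4$ or $\mathbb{RP}^4$, while a non-compact soliton with bounded positive curvature operator is excluded by the asymptotic-scalar-curvature-ratio / dimension-reduction dichotomy, which under \eqref{restrictive} always produces a line splitting and contradicts $Ric>0$. In particular, if the ancient solution is non-compact its asymptotic soliton is necessarily the round cylinder $\mathbb{R}\times\mathbb{S}^3$ (up to $\mathbb{Z}_2$).

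The third step passes from the asymptotic soliton back to the solution. If the solution is compact, then since it has positive curvature operator it is diffeomorphic to $\mathbb{S}^4$ or $\mathbb{RP}^4$ by Hamilton's theorem \cite{Ha86}, so we are in case (c). If it is non-compact, the asymptotic soliton is the cylinder, and one runs the neck--cap dissection. A point at which the solution is, after normalization, sufficiently close to the shrinking cylinder is by definition the center of an evolving $\epsilon$-neck, so it remains to show that the set $\Omega$ of points which are \emph{not} such centers either is empty and the time slice is a closed manifold covered by overlapping necks (hence $\mathbb{S}^4$ or $\mathbb{RP}^4$, again case (c)), or is contained in finitely many compact pieces, each an evolving $\epsilon$-cap diffeomorphic to $\mathbb{B}^4$ or $\mathbb{RP}^4\setminus\overline{\mathbb{B}^4}$. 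This is done as in \cite{P1} and \cite{CZ05F}: one shows $\Omega$ has bounded diameter in the rescaled metric, analyzes how a maximal chain of $\epsilon$-necks can terminate, and uses that the only fixed-point-free quotients yielding a smooth cap here are the trivial group and $\mathbb{Z}_2$, producing $\mathbb{B}^4$ and $\mathbb{RP}^4\setminus\overline{\mathbb{B}^4}$ respectively; the radius $r<C_1(\epsilon)R(x,t)^{-1/2}$ is then read off.

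The main obstacle is the conjunction of the ``curvature bounded at bounded distance'' estimate in the first step and the soliton classification in the second: these are exactly the points where dimension four genuinely differs from dimension three, and both rely essentially on the restricted isotropic curvature pinching \eqref{restrictive} to rule out stray four-dimensional shrinking solitons and degenerate blow-up limits. Once these are in hand, the neck--cap dissection is essentially formal, following Perelman's three-dimensional scheme.
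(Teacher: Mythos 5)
Your overall plan — compactness for the class of ancient $\kappa$-solutions together with the elliptic gradient bounds and universal noncollapsing, passing to the asymptotic shrinking soliton and classifying it using strong maximum principle plus the restricted pinching, and then running a neck--cap dissection — is indeed the route taken in the cited work (which adapts Perelman's three-dimensional scheme, with the pinching \eqref{restrictive} playing the role that nonnegative sectional curvature plays in three dimensions), and the key subclaims in your first two steps are essentially correct.

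The third step, however, contains a concrete misstep. You assert that if the solution is compact, then, since it has positive curvature operator, it is diffeomorphic to $\mathbb{S}^4$ or $\mathbb{RP}^4$ by Hamilton's theorem ``so we are in case (c).'' This does not follow. Case (c) is not merely the assertion that the ambient manifold is $\mathbb{S}^4$ or $\mathbb{RP}^4$: it requires that the whole compact manifold $M=B$ fit inside $B_t(x,2r)$ with $r<C_1R(x,t)^{-1/2}$ and that $R$ on all of $M$ lie between $C_2^{-1}R(x,t)$ and $C_2R(x,t)$ for universal constants. Compact ancient $\kappa$-solutions can have arbitrarily elongated time-slices (there are ``sausage''-type ancient solutions in dimension four analogous to Perelman's in dimension three), and for such a solution a point near the middle has a neck neighborhood (case (a)) rather than a small-diameter whole manifold (case (c)). Likewise, the paragraph that begins ``If it is non-compact...'' then goes on to entertain ``the time slice is a closed manifold covered by overlapping necks,'' which is internally inconsistent with the stated non-compactness hypothesis: the neck--cap dissection must be run in both the compact and the non-compact case, and the dichotomy it produces is (i) normalized diameter bounded, giving (c), versus (ii) the point has a neck or cap neighborhood, giving (a) or (b). As written your step 3 skips the genuine work of establishing this dichotomy, and the explicit claim that compactness forces (c) is false. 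Apart from this, the sketch is consistent with the argument in the cited reference.
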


The key difficulty in analyzing  the local structure of ancient
$\kappa-$solution is the collapsing of the solution in the presence
of orbifold singularities with big local uniformization groups.
First of all,  we need to generalize the concept of
$\varepsilon-$neck or $\varepsilon-$cap to orbifold solutions with
at most isolated singularities, the point is that we allow a
suitable isometric group to act on the usual necks and caps. Recall
in this paper,
 we  define (topologically) that neck is (diffeomorphic to)  $\mathbb{S}^3/ \Gamma\times \mathbb{
R};$ and smooth cap is  $C_{\Gamma}^{\sigma}$ and  orbifold cap
contains two types:  type I:  $C_{\Gamma},$ and type II:
$\mathbb{S}^4/(x,\pm x')\setminus \bar{B}.$ The motivation to
define the orbifold caps to contain only the above two types is
from the consideration of canonical neighborhoods in this paper.

\begin{definition}

 Fix  $\varepsilon>0$ and a space time point $(x,t).$ Let
   $ B\subset X$ be a space open subset containing $x,$

 (i) we call $B$ an \textbf{evolving
 $\varepsilon$-neck} around $(x,t)$
 if it is the time slice at time $t$ of the parabolic region
$\{(x',t')|x'\in B, t'\in [t-\varepsilon^{-2} R(x,t)^{-1},t]\}$
which satisfies that there is a diffeomorphism $\varphi:
\mathbb{I}\times (\mathbb{S}^{3}/\Gamma)\rightarrow B$ such that
 , after pulling back the solution $(\varphi)^{*}g(\cdot,\cdot)$
 to $\mathbb{I}\times \mathbb{S}^{3},$
scaling with factor $R(x,t)$ and shifting the time $t$ to $0$, the
solution is  $\varepsilon$-close(in $C^{[\varepsilon^{-1}]}$
topology) to the
 subset $(\mathbb{I}\times \mathbb{S}^{3})\times [-\varepsilon^{-2},0]$
 of the evolving round cylinder
$\mathbb{R}\times \mathbb{S}^3$, having scalar curvature one and
length $2\varepsilon^{-1}$ to $\mathbb{I}$ at time zero,

(ii) we call $B$ an $\textbf{evolving $\varepsilon $-cap}$ if it
is the time slice at the time $t$ of an evolving metric on open
smooth caps $C_{\Gamma}^{\sigma}$ and orbifold caps of the above
two types $C_{\Gamma}$ and $\mathbb{S}^4/(x,\pm x')\setminus
\bar{B}$ such that the region outside some suitable compact subset
is an evolving $\varepsilon$-neck around some point in the sense
of (i).
\end{definition}

 Let us start with the following elliptic
type curvature estimate for our orbifold solution. The idea of proof
is to find out a global uniformaization space which is not collapsed
and investigate the isometric group action on it.
\begin{proposition}\label{elliptic}
There is a universal positive function $\omega:[0,
\infty)\rightarrow [0,\infty)$ such that for any an ancient
$\kappa-$orbifold solution on $4$-orbifold $X$, we have
$$
R(x,t)\leq R(y,t)\omega(R(y,t)d_t(x,y)^2)
$$
for any $x,y\in X, t\in (-\infty,0].$
 \end{proposition}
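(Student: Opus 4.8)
The plan is to reduce the estimate to a compactness statement for ancient $\kappa$-orbifold solutions and argue by contradiction. Translating time so that the common time is $0$, it suffices to show that $\omega(A):=\sup R(x,0)/R(y,0)$, the supremum over all ancient $\kappa$-orbifold solutions and all $x,y$ with $R(y,0)d_0(x,y)^2\le A$, is finite for each $A\in[0,\infty)$ — it is then automatically nondecreasing. (Here one uses that such solutions are $\kappa_0$-noncollapsed for a universal $\kappa_0>0$, as in the manifold case.) Suppose this fails and let $A_0<\infty$ be the infimum of the $A$ with $\omega(A)=\infty$, so $\omega(A)<\infty$ for $A<A_0$. Choose ancient $\kappa$-orbifold solutions $(X_k,g_k(\cdot))$ and points $x_k,y_k$ with $R(y_k,0)d_0(x_k,y_k)^2\le A_0+1/k$ and $R(x_k,0)/R(y_k,0)\to\infty$; rescaling so that $R(y_k,0)=1$ we have $d_k:=d_0(x_k,y_k)$ with $d_k^2\le A_0+1/k$ and $R(x_k,0)\to\infty$. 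Since $\omega(A)<\infty$ for $A<A_0$, necessarily $d_k\to A_0^{1/2}$, and along a minimizing geodesic $\gamma_k$ from $y_k$ to $x_k$ one has $R(\gamma_k(s),0)\le\omega(A_0-\epsilon)$ for all $s\le(A_0-\epsilon)^{1/2}$ (any fixed $\epsilon>0$); hence the blow-up of $R(x_k,0)$ is confined to the terminal subarc of $\gamma_k$ at distance between $(A_0-\epsilon)^{1/2}$ and $d_k$ from $y_k$, whose length tends to $A_0^{1/2}-(A_0-\epsilon)^{1/2}$ and so may be made arbitrarily small.

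On this terminal subarc I would apply Perelman's point-selection to produce a point $\bar x_k$ with $Q_k:=R(\bar x_k,0)\to\infty$ such that, after rescaling the metric by $Q_k$, the scalar curvature stays bounded on the ball of radius $L_k\to\infty$ about $\bar x_k$. Hamilton's trace Harnack inequality is available: the restricted isotropic curvature pinching $a_3\le\Lambda a_1$, $c_3\le\Lambda c_1$, $b_3^2\le a_1c_1$, together with positive isotropic curvature, forces $a_1>0$, $c_1>0$, and then $A>0$, $C>0$ and $B^{t}A^{-1}B\le(b_3^2/a_1)I\le c_1I\le C$, so the curvature operator $\mathcal R\ge 0$; hence $\partial_t R\ge 0$ for an ancient solution, which together with the bound on the time-$0$ ball gives a uniform curvature bound on the whole backward parabolic region of $\bar x_k$ in the rescaled solutions.

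Extracting a pointed limit is the step I expect to be the main obstacle, because near an orbifold singularity with a large local uniformization group the rescaled solutions may be collapsed. Following the method announced at the start of this section, one passes to the local uniformization cover $\widetilde B\cong\mathbb R^4$ of a neighborhood of each relevant singular point (with the lifted flow and the isometric action of the uniformization group $\Gamma$), and must establish there curvature and injectivity-radius bounds uniform in $|\Gamma|$, using the $\kappa$-noncollapsing of the orbifold together with the bounded curvature of the ancient solution; one then takes the limit of these pointed uniformization spaces together with the $\Gamma_k$-actions by isometries. This yields a complete nonflat ancient solution $(X_\infty,g_\infty(\cdot),\bar x_\infty)$ of bounded curvature which still satisfies the restricted pinching — so $\mathcal R_{g_\infty}\ge 0$ — with $R_{g_\infty}(\bar x_\infty,0)=1$, and which is again an ancient $\kappa$-orbifold solution in the sense of Definition \ref{def1} (a smooth manifold, or an orbifold with at most isolated singularities).

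Finally one derives a contradiction. Taking $\epsilon=\epsilon_k\to 0$ and choosing the intermediate curvature level and the point-selection parameters by a diagonal argument, the confinement of the blow-up to an arbitrarily short terminal subarc forces some point $w_k$ of the preceding (good) part of $\gamma_k$, where $R(w_k,0)\le\omega(A_0-\epsilon_k)$ and hence $Q_k^{-1}R(w_k,0)\to 0$, to stay at bounded distance from $\bar x_k$ in the $Q_k$-rescaled metric; passing to the limit, $R_{g_\infty}$ vanishes at the corresponding point of $X_\infty$. But a complete nonflat ancient solution with bounded curvature operator $\mathcal R\ge 0$ has $R>0$ everywhere: by Hamilton's strong maximum principle applied to $\partial_t R=\Delta R+2|\mathrm{Ric}|^2\ge\Delta R$, vanishing of $R$ at one point forces $R\equiv 0$ at all earlier times, hence $\mathrm{Ric}\equiv 0$, and then $\mathcal R\equiv 0$ (its trace is a positive multiple of $R$), so the metric is flat — a contradiction. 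Hence $\omega(A)<\infty$ for all $A$. Apart from the collapsing issue, the other delicate point is to make the point-selection and the diagonal choices of $\epsilon_k$ and of the curvature levels compatible, so that the low-curvature reference point genuinely survives in the rescaled limit rather than escaping to infinity.
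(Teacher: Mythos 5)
Your proposal takes a genuinely different route from the paper, but it stalls at exactly the step you flag as "the main obstacle," and the way you propose to fix it does not work.

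You run a Perelman-style bounded-curvature-at-bounded-distance argument: set $\omega(A)$ as a supremum, take the threshold $A_0$, select almost-extremal points, apply point-picking and the trace Harnack inequality, and try to extract a pointed limit that vanishes at a finite point. This is precisely the argument for the smooth manifold case (Proposition 3.3 and Theorem 3.5 of \cite{CZ05F}), and in that setting it works because the limit-extraction step has the needed injectivity-radius control. In the orbifold setting, however, the sequence of rescalings around $\bar x_k$ can see orbifold singularities with local uniformization groups $\Gamma_k$ of unbounded order, so the rescaled metrics collapse near those points and the pointed Gromov--Hausdorff/$C^\infty$-convergence you invoke simply fails. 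Your fix---pass to the local uniformization cover $\widetilde B$ near each relevant singularity, prove uniform bounds there, and take limits of the pointed covers together with the $\Gamma_k$-actions---does not repair this: each $\widetilde B$ is an \emph{incomplete} ball whose size at the blow-up scale is not under your control, so the lifted solutions do not form a sequence to which the compactness theorems apply, and since $|\Gamma_k|$ is unbounded, the $\Gamma_k$-actions need not sub-converge to a finite-group action giving an orbifold with isolated singularities. Nothing in your outline rules out that the relevant singularity sits at bounded rescaled distance from $\bar x_k$ with $|\Gamma_k|\to\infty$, which is exactly the pathological situation.

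The paper's proof avoids the blow-up argument entirely by establishing first, via Theorem~\ref{t3.4} and Corollary~\ref{3.8}, that the orbifold is a \emph{global} quotient: if the curvature operator has a null eigenvector the scalar curvature is constant and the claim is trivial; if $X$ is compact with positive curvature operator it is diffeomorphic to $\mathbb{S}^4/G$ and one pulls the flow back to $\mathbb{S}^4$; if $X$ is noncompact with positive curvature operator a Busemann-function/convexity argument produces a diffeomorphism $\mathbb{R}^4/\Gamma\to X$, and one pulls the flow back to $\mathbb{R}^4$. In each nontrivial case the pulled-back flow is a smooth ancient $\kappa$-solution to which the known manifold estimate applies directly, and the estimate descends to $X$ because geodesics lift. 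This is what the paper means by "find a global uniformization space which is not collapsed"; your local-cover proposal does not supply that global structure, and without it the limit in your contradiction argument cannot be extracted. To make your route rigorous you would effectively have to prove the global uniformization anyway, at which point you may as well invoke the manifold result rather than redo the point-picking.
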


\begin{proof}
This proposition for the case that $X$ is a smooth manifold has been
estalished in \cite{CZ05F} (see Theorem 3.5 and Proposition 3.3 in
\cite{CZ05F}). Thus we may always assume that $X$ has at least one
(orbifold) singularity.

 Case 1: Curvature operator has zero
(eigenvalue) somewhere. Then by section \ref{sec3.2.1}, the scalar
curvature is constant. So the proposition holds trivially in this
case.

Case 2: $X$ is compact with positive curvature operator. By the work
of Hamilton, if we continue to evolve the metric, the metric will
become rounder and rounder. On the other hand, by our
$\kappa-$noncollasing assumption, and the compactness theorem of
\cite{L}, we can extract a convergent subsequence to get a limit
which is compact and round. From this,  we know the orbifold $X$ is
diffeomorphic to a compact orbifold with positive constant sectional
curvature and with at most isolated singularities. By \cite{Th},
there is a finite subgroup $G\subset ISO(\mathbb{S}^4)$ of
isometries of $\mathbb{S}^4$ such that $\mathbb{S}^4/G$ is
diffeomorphic to $X.$ Let $\pi:\mathbb{S}^4\rightarrow X $ be the
naturally defined smooth map, and
$\tilde{g}(\cdot,t)=\pi^{*}g(\cdot,t)$ be the induced $G$ invariant
solution of Ricci flow on smooth manifold $\mathbb{S}^4.$ Now we
check the $\kappa-$noncollapsing of $\tilde{g}.$ Suppose
$\tilde{R}(\cdot,t)\leq r^{-2}$ on $\tilde{B}_{t_0}(\tilde{x},r)$
for all $t\in [t_0-r^2,t_0].$ Let $x=\pi(\tilde{x})\in X,$ $\gamma$
be a geodesic in $X$ of length $\leq r$ with $x=\gamma(0).$ Then
$\gamma$ has a lift
 of geodesic $\tilde{\gamma}$ (which may not be unique) in $\mathbb{S}^4$ with
 $\tilde{\lambda}(0)=\tilde{x}$, and
 $L(\tilde{\gamma})=L(\gamma).$
  This fact implies $\pi:\tilde{B}_{t_0}(\tilde{x},r)\rightarrow
 {B}_{t_0}({x},r)$ is surjective. This implies the curvature
of $X$ is still bounded by $r^{-2}$ on $B_{t_0}(x,r)\times
[t_0-r^{2},t_0],$ and hence $
 vol_{t_0}(\tilde{B}(\tilde{x},r))\geq vol_{t_0}({B}({x},r))\geq \kappa r^{4}$
by the $\kappa-$noncollapsing assumption.  So we have showed that
the solution $\tilde{g}$ is an ancient $\kappa-$solution on smooth
manifold. By \cite{CZ05F} (Theorem 3.5 and Proposition 3.3 in
\cite{CZ05F}), $\tilde{g}(\cdot,t)$ is  $k_0-$noncollapsed for some
universal constant $k_0.$ Furthermore, there is a universal positive
function $\omega$ such that
\begin{equation}\label{{R}}
\tilde{R}(\tilde{x},t)\leq
\tilde{R}(\tilde{y},t)\omega(\tilde{R}(\tilde{y},t)\tilde{d}_{t}(\tilde{x},\tilde{y})^{2})
\end{equation}
for the curvature of induced Ricci flow $\tilde{g}(\cdot,t)$  at
any two points $\tilde{x},\tilde{y}\in \mathbb{R}^4, t\in
(0,\infty].$ For any pair of points $x, y\in X,$  we draw minimal
geodesic $\gamma$ connecting $x,y$ in $X,$  $\gamma$ can be lifted
to a geodesic $\tilde{\gamma}\subset \mathbb{R}^4$ connecting two
points $\tilde{x}\in \tilde{\Phi}^{-1}(x),
\tilde{y}=\tilde{\Phi}^{-1}(y).$ Since
$\tilde{d}(\tilde{x},\tilde{y})\leq L(\tilde{\gamma})=d(x,y)$ and
$R(x,t)=R(\tilde{x},t)$ and $R(y,t)=\tilde{R}(\tilde{y},t),$ by
(\ref{{R}}), we get
$$
R(x,t)\leq {R}({y},t)\omega({R}({y},t){d}_{t}({x},{y})^{2}).
$$

 Case 3: We assume $X$ is noncompact and has positive
curvature operator.  Let $P$ be a fixed singularity of $X.$ We
define a Busemann function $\varphi$ at time $-1$ in the following
way:
$$\varphi(x)=\sup_{\gamma}\lim\limits_{s\rightarrow +\infty}(s-d_{-1}(x,\gamma(s)))
$$
where the sup is taken over all normal geodesic ray $\gamma$
originating from $P.$ It is well-known that $\varphi$ is convex
(with respect to the metric at time $-1$) and of Lipschitz constant
$\leq 1$ and proper. Deforming $\varphi$ by the heat equation
$$
\frac{\partial u}{\partial t}=\triangle_t u
$$
with $u|_{t=-1}=\varphi.$ By a straightforward computation, we have
$$
\frac{\partial}{\partial t} u_{ij}=\triangle
u_{ij}+g^{km}g^{ln}R_{ikjl}u_{mn}-\frac{1}{2}(g^{kl}R_{ik}u_{lj}+g^{kl}R_{jk}u_{lj})
$$
where $u_{ij}=\nabla^{2}_{ij}u$ are the Hessian of $u.$ Noting the
curvature operator is positive, by maximum principle, we have $
\nabla^2 u \geq 0 $ is preserved. Moreover we have $ \nabla^2 u >0 $
at $t=0$ by the following reasons. The kernel of  $ \nabla^2 u$ is a
parallel distribution by strong maximum principle of Hamilton
\cite{Ha86}. If the kernel is nontrivial, then either the space
splits product $\mathbb{R}\times \Sigma$ locally or the space admits
a linear function $(\nabla^2u=0)$. Both cases have contradiction
with the strict positive curvature operator.

 Now we fix the time $t=0.$ Notice that $u$ is still a proper function, so by strict convexity of $u,$ we
know $u$ has a unique critical point, which is the minimal point.
 We claim the minimal point is just the singular point $P$ we
specified in the beginning. Hence there are no other
singularities. The argument is in the following. Let $\pi:
\tilde{U}\rightarrow U,$ $U=\tilde{U}/\Gamma$ be the local
uniformization near $P.$ Then $\tilde{u}=u\circ \pi$ is $\Gamma$
invariant, and we have $d\gamma (\nabla u)(P)=\nabla u(P)$ for any
$\gamma\in \Gamma.$ Since $\Gamma$ has isolated fixed point, we
have $\sum_{\gamma\in \Gamma}d\gamma (\nabla u)(P)=0$ and $\nabla
u(P)=0$ consequently.

Let $\xi=\frac{\nabla u}{|\nabla u|}$ be a vector field which is
singular at $P.$ Now we consider the map
$\Phi:C_pX=Cone(\mathbb{S}^{3}/\Gamma)\rightarrow X$ defined by
$$
\Phi(v,s)=\alpha_{v}(s)
$$
where $\alpha_{v}(s)$ is the integral curve of $\xi$ with
$\alpha_{v}(0)=P$ and $\alpha_{v}'(0)=v.$  By
using$\nabla^{2}u(P)>0,$ the $\frac{\nabla u}{|\nabla u|}$ can
take any value, so the above map is defined.  Clearly, $\Phi$ is a
global orbifold diffeomorphim. We define $\tilde{\Phi}:
\mathbb{R}^4=Cone(\mathbb{S}^3) \rightarrow X$ by
$$
\tilde{\Phi}=\Phi\cdot \pi
$$
where $\pi: Cone(\mathbb{S}^3)\rightarrow
Cone(\mathbb{S}^3/\Gamma)$ is the natural projection. Define
$$\tilde{g}(\cdot,t)=\tilde{\Phi}^{\ast}g(\cdot,t).$$
Then $\tilde{g}(\cdot,t)$ is a smooth complete ancient $\kappa$
solution on smooth manifolds $\mathbb{R}^4$ with positive curvature
operator and restricted isotropic pinching condition. Moreover by
\cite{CZ05F} again,  $\tilde{g}(\cdot,t)$ is  $k_0$ noncollapsed for
some universal constant $k_0,$ and same argument as in Case 2
completes the proof.
\end{proof}

\begin{corollary}\label{3.8}
Let $g_t$ be an ancient $\kappa-$orbifold solution on complete
\textbf{noncompact} $4$-orbifold $X$ with \textbf{positive}
curvature operator and nonempty isolated singularities, then there
is at most one singularity and there is a finite group of isometries
$\Gamma\subset ISO(\mathbb{R}^4)$ of standard $\mathbb{R}^4,$ such
that $O$ is the only fixed point for any element of $\Gamma,$ and
$X$ is diffeomorphic to $\mathbb{R}^4/\Gamma$ as orbifolds.

\end{corollary}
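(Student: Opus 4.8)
The plan is to read off the statement directly from the construction already carried out in Case 3 of the proof of Proposition \ref{elliptic}, specialized and packaged for the noncompact setting. Fix an orbifold singularity $P$ of $X$ and form, at time $-1$, the Busemann function $\varphi$ associated with the normal geodesic rays emanating from $P$; it is convex, $1$-Lipschitz and proper. Deform it by the heat equation $\partial_t u=\triangle_t u$ with $u|_{t=-1}=\varphi$. Using the positivity of the curvature operator in the evolution equation for $\nabla^2 u$, together with Hamilton's strong maximum principle (which forces the kernel of $\nabla^2u$, if nontrivial, to split off an $\mathbb{R}$-factor or produce a linear function, both incompatible with strict positivity of $\Riem$), one gets $\nabla^2 u>0$ everywhere at $t=0$. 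Since $u$ stays proper, strict convexity yields a \emph{unique} critical point of $u$ at time $0$, which is its minimum.

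The first step is to identify this unique critical point with $P$ itself. In a local uniformization $\pi:\tilde U\to U=\tilde U/\Gamma$ near $P$, the pull-back $\tilde u=u\circ\pi$ is $\Gamma$-invariant, so $\nabla u(P)$ is fixed by $d\gamma$ for every $\gamma\in\Gamma$; averaging, $\sum_{\gamma\in\Gamma}d\gamma(\nabla u)(P)=0$, and since $\Gamma$ acts with an isolated fixed point this forces $\nabla u(P)=0$. Uniqueness of the critical point then shows that $P$ is the only orbifold singularity of $X$, which is the first assertion. Here $\Gamma=\Gamma_P$ is (conjugate to) a finite subgroup of $O(4)$, and because its fixed set in $\mathbb{R}^4$ is a linear subspace which is isolated, it reduces to the single point $O$.

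For the diffeomorphism statement, the second step is to run the normalized gradient flow of $\xi=\nabla u/|\nabla u|$ (smooth away from $P$) and build the cone map $\Phi:C_PX=\mathrm{Cone}(\mathbb{S}^3/\Gamma)\to X$, $\Phi(v,s)=\alpha_v(s)$, where $\alpha_v$ is the integral curve of $\xi$ with $\alpha_v(0)=P$, $\alpha_v'(0)=v$. The condition $\nabla^2 u(P)>0$ guarantees that $\nabla u/|\nabla u|$ attains every unit direction at $P$, so $\Phi$ is well defined; properness of the strictly convex exhaustion $u$ gives completeness of these trajectories and surjectivity, while the absence of critical points other than $P$ gives injectivity. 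Hence $\Phi$ is a global orbifold diffeomorphism. Precomposing with the natural projection $\mathrm{Cone}(\mathbb{S}^3)\to\mathrm{Cone}(\mathbb{S}^3/\Gamma)$ identifies $X$ with $\mathbb{R}^4/\Gamma$ as orbifolds, with $\Gamma\subset ISO(\mathbb{R}^4)$ finite and fixing only $O$.

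The only genuinely delicate point — and the one I would flag as the main obstacle — is upgrading $\Phi$ from a local diffeomorphism near $P$ to a \emph{global} one: one must verify that the gradient trajectories of $\xi$ sweep out all of $X$ with no recurrence and no escape to infinity in finite parameter, which is precisely where properness together with strict convexity (no spurious critical points) of $u$ at time $0$ is essential. All the remaining ingredients — convexity and properness of $\varphi$, the Hessian evolution, the strong maximum principle step, and the $\Gamma$-averaging at $P$ — are a direct transcription of the argument in Proposition \ref{elliptic}, so the corollary amounts to repackaging that proof in the noncompact case.
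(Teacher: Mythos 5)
Your reconstruction is correct and coincides exactly with what the paper intends: the paper gives no separate proof for Corollary~\ref{3.8} because it reads off directly from Case~3 of the proof of Proposition~\ref{elliptic} (Busemann function, heat-flow smoothing, $\nabla^2u>0$ via strong maximum principle, unique critical point identified with the singularity by $\Gamma$-averaging, and the cone diffeomorphism $\Phi$). Your added discussion of why $\Phi$ is a \emph{global} diffeomorphism (properness plus absence of spurious critical points) is a reasonable expansion of what the paper asserts with the word ``Clearly.''
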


\begin{theorem}\label{t3.9}
 For every $\varepsilon>0$ one can find positive constants
$C_1=C_1(\varepsilon)$, $C_2=C_2(\varepsilon)$ such that for each
point $(x,t)$ in every complete \textbf{noncompact}
four-dimensional ancient $\kappa$-\textbf{orbifold} solution  with
\textbf{positive} curvature operator, there is a radius $r$,
$\frac{1}{C_1}(R(x,t))^{-\frac{1}{2}}
<r<C_1(R(x,t))^{-\frac{1}{2}}$, so that some open neighborhood
$B_t(x,r)\subset B\subset B_t(x,2r)$ falls into one of the
following two categories:

(a) $B$ is an \textbf{evolving $\varepsilon$-neck} around $(x,t)$,

(b) $B$ is an $\textbf{evolving $\varepsilon $-cap}$ of Type I.

Moreover, the scalar curvature  in $B$ at time $t$ is between
$C^{-1}_2R(x,t)$ and $C_2R(x,t).$

\end{theorem}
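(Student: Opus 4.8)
The plan is to pass to the universal orbifold cover, where a noncompact positive–curvature–operator ancient solution is a smooth simply connected space already understood in \cite{CZ05F}, and then to push its canonical neighborhood structure back down through the quotient map.

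First, if $X$ is a smooth manifold, then being complete, noncompact and of positive sectional curvature it is diffeomorphic to $\mathbb{R}^4$ by the soul theorem, and the assertion is exactly Theorem 3.8 of \cite{CZ05F} recalled above, with the $\varepsilon$-cap equal to $\mathbb{B}^4 = C_{\{1\}}$. So I would assume $X$ has (isolated, nonempty) singularities. By Corollary \ref{3.8} there is a finite subgroup $\Gamma$ of the isometry group of the standard $\mathbb{R}^4$ whose only fixed point is the origin $O$ — hence $\Gamma$ acts freely on the unit $\mathbb{S}^3$ — with $X$ diffeomorphic as an orbifold to $\mathbb{R}^4/\Gamma$. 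I would then take, as in Case 3 of the proof of Proposition \ref{elliptic}, the $\Gamma$-invariant strictly convex proper function $u$ obtained at time $0$ by running the heat equation on a Busemann function based at the singular point, together with the associated map $\tilde\Phi\colon \mathbb{R}^4 = \mathrm{Cone}(\mathbb{S}^3) \to X = \mathrm{Cone}(\mathbb{S}^3/\Gamma)$, $\tilde\Phi(v,s) = \alpha_v(s)$, where $\alpha_v$ is the integral curve of $\nabla u/|\nabla u|$ from $\tilde O := \tilde\Phi^{-1}(O)$ with initial velocity $v$. Setting $\tilde g_t = \tilde\Phi^{*} g_t$, one gets a smooth complete ancient $\kappa_0$-solution on $\mathbb{R}^4$ with positive curvature operator and restricted isotropic pinching, on which $\Gamma$ acts isometrically; and since $\nabla u/|\nabla u|$ is $\Gamma$-invariant, in the coordinates $(v,s) \in \mathbb{S}^3 \times [0,\infty)$ each $\gamma \in \Gamma$ acts by $(v,s) \mapsto (\gamma v, s)$, that is, only on the $\mathbb{S}^3$ factor and freely there.

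I would next apply Theorem 3.8 of \cite{CZ05F} to $(\mathbb{R}^4, \tilde g_t)$: noncompactness rules out case (c), so each point has an evolving $\varepsilon$-neck or $\varepsilon$-cap as canonical neighborhood, and combining this with the elliptic estimate of Proposition \ref{elliptic} and the one-ended structure of $\mathbb{R}^4$ in the usual way, there is a compact core which — since $u$ is $\Gamma$-invariant and strictly increasing along each $\alpha_v$ — may be taken to be a sublevel set $K_t = \{u \le c_t\}$, $\Gamma$-invariant, of diameter $\asymp R(\tilde O,t)^{-1/2}$, with $K_t$ an evolving $\varepsilon$-cap and $\mathbb{R}^4 \setminus K_t \cong \mathbb{S}^3 \times (0,\infty)$ swept out by evolving $\varepsilon$-necks. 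Descending through $\tilde\Phi$, the set $\tilde\Phi(K_t)$ is diffeomorphic to $\mathbb{B}^4/\Gamma = C_\Gamma$, an orbifold cap of Type I, and $X \setminus \tilde\Phi(K_t) \cong (\mathbb{S}^3/\Gamma) \times (0,\infty)$. Given $(x,t) \in X$ with $Q := R(x,t)$: if $d_t(x,O) \le D\,Q^{-1/2}$ for a suitable large $D = D(\varepsilon,\Lambda)$, then Proposition \ref{elliptic} gives $R \asymp R(O,t) \asymp Q$ on $B_t(O, C_2 Q^{-1/2})$, and enlarging $\tilde\Phi(K_t)$ to a metric ball $B$ of radius $r \asymp Q^{-1/2}$ yields an evolving $\varepsilon$-cap of Type I with $B_t(x,r) \subset B \subset B_t(x,2r)$; otherwise $x$ lies in the neck region, and I would take a lift $\tilde x$ together with the evolving $\varepsilon$-neck of $(\mathbb{R}^4, \tilde g_t)$ about $\tilde x$ of curvature-scaled size $\asymp Q^{-1/2}$, which in the coordinates above is a product piece $\mathbb{S}^3 \times (s_0 - \ell, s_0 + \ell)$ and is therefore carried by $\tilde\Phi$ onto the sub-neck $(\mathbb{S}^3/\Gamma) \times (s_0 - \ell, s_0 + \ell)$ of $X$; since $\Gamma$ acts on this near-round region, up to $\varepsilon$, by isometries of the round metric, its quotient is an evolving $\varepsilon$-neck of $X$ about $x$. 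In both cases the scalar curvature comparison $C_2^{-1} R(x,t) \le R \le C_2 R(x,t)$ on $B$ follows — from Proposition \ref{elliptic} in the cap case, from the normalized cylindrical geometry in the neck case — and $r$ is comparable to $Q^{-1/2}$ throughout, which gives the two-sided radius bound (a normalized neck can always be fattened to gain the lower bound).

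The main obstacle is the last descent step: one must show that an evolving $\varepsilon$-neck of $\mathbb{R}^4$, $\varepsilon$-close in $C^{[\varepsilon^{-1}]}$ to a round cylinder, has $\Gamma$-quotient that is $\varepsilon'$-close to $(\mathbb{S}^3/\Gamma) \times \mathbb{R}$ in the same sense — equivalently, that the isometric $\Gamma$-action on the near-round cross-sections is $C^{[\varepsilon^{-1}]}$-close to an honest isometric action of $\Gamma$ on the round $\mathbb{S}^3$, so that one may quotient without spoiling the estimate. I would derive this from the rigidity of finite isometric subgroups of $O(4)$ (a near-isometric action being conjugate to an isometric one) together with the closeness estimate and the fact that the action is already fixed-point free on cross-sections. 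Granting this and the standard core structure on $(\mathbb{R}^4,\tilde g_t)$, the argument runs as sketched.
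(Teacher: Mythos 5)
Your overall strategy — lift to the smooth cover $\mathbb{R}^4 = \mathrm{Cone}(\mathbb{S}^3)$ via the map $\tilde\Phi$ built from the gradient flow of the convexified Busemann function, apply Theorem~3.8 of \cite{CZ05F} upstairs, and push the canonical neighborhoods down through the $\Gamma$-quotient — is the same as the paper's, and you correctly identify the crux: one must know that the $\Gamma$-action, restricted to an evolving $\varepsilon$-neck of $(\mathbb{R}^4,\tilde g_t)$, is (in the neck's own parametrization) $C^{[\varepsilon^{-1}]}$-close to a genuine isometric action of a fixed-point-free subgroup of $O(4)$ on the round cylinder, so that the quotient is again an $\varepsilon$-neck. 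Where you part ways with the paper is in how this is established. You propose an appeal to ``rigidity of finite isometric subgroups of $O(4)$'' to conjugate the near-isometric action to an honest one, but you leave this as an assertion; it is not a standard quotable lemma in the $C^{[\varepsilon^{-1}]}$-topology and would need a genuine proof. The paper instead constructs \emph{Hamilton's canonical parametrization} $\Phi:\mathbb{S}^3\times(A,B)\to\tilde X$ of the entire neck region: the cross-sections are the constant-mean-curvature hypersurfaces, each parametrized by a harmonic diffeomorphism from the round $\mathbb{S}^3$, the residual rotational ambiguity is killed by the orthogonality normalization \eqref{e2}, and the $\mathbb{R}$-coordinate is fixed by the volume condition. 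Every ingredient of this construction is manifestly $\Gamma$-equivariant, so $\hat\Gamma=\Phi^{-1}\Gamma\Phi$ acts \emph{exactly} isometrically on the standard $\mathbb{S}^3\times(A,B)$ with no further argument, and one then just has to observe (as the paper does, by the noncompactness and strict positivity of the curvature operator) that the $\mathbb{R}$-factor of $\hat\Gamma$ contains no translations and no reflections, hence $\hat\Gamma$ acts only on the $\mathbb{S}^3$ factor. This is the concrete substitute for your rigidity claim, and it buys two further things for free that your sketch glosses over: (i) the location of the singular point $O$ — since $\hat\Gamma$ acts freely on each $\mathbb{S}^3$-slice, $O$ cannot lie inside the parametrized region, which pins it into the bounded cap at distance $\lesssim \sqrt{D(\epsilon)}$ from $x_0$; and (ii) the identification of the cap with $C_\Gamma$, which the paper completes by a Morse-theoretic comparison of the Busemann sublevel sets with the sublevel sets of the canonical $\mathbb{R}$-coordinate $f$. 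In short, your proposal is correct in structure and correctly locates the one delicate step, but the step is not carried out; the paper's canonical-parametrization argument is the mechanism that actually closes it.
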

\begin{proof}
 We denote the unique singularity by $O.$ By corollary \ref{3.8}, $X$ is diffeomorphic to
$\tilde{X}/\Gamma,$ where $\tilde{X}$ is diffeomorphic to
$\mathbb{R}^4,$ and $\Gamma\subset ISO(\mathbb{R}^4)$ fixes the
origin, denoted also by $O.$ Let $\tilde{g}$ be the pulled back
solution on $\tilde{X},$ which is a $\Gamma-$ invariant solution on
$\tilde{X}.$ Note that the solution $\tilde{g}$ is also
$\kappa-$noncollapsed and therefore $\kappa_0-$noncollapsed for some
universal $\kappa_0>0$ by theorem 3.5 in \cite{CZ05F}. Fix time
$t=0.$ Now by the proof of Theorem 3.8 in \cite{CZ05F}, there is a
point $x_0\in \tilde{X},$ such that for any given small $\epsilon>0,
$ there is a constant $D(\epsilon)>0$ depending only on $\epsilon$
such that any $(x,0)$ satisfying $R(x_0,0)d_{0}(x,x_0)^{2}\geq
D(\epsilon)$ admits an {evolving $\epsilon$-neck} around it. We
scale the solution so that $R(x_0,0)=1.$ In the following, we
describe the canonical parametrization of necks which was given by
 Hamilton in the section C of \cite{Ha97}.
 We will use Hamilton's canonical parametrization to parametrize all the points outside a ball of radius $D(\epsilon)+1$ centered at $x_0$ by a canonical
 diffeomorphism $\Phi$ from $\mathbb{S}^3\times \mathbb{I},$ where $\mathbb{I}\in \mathbb{R}$ is an interval.

   For any $z\in \tilde{X}$ with $d_{0}(z,x_0)^{2}\geq D(\epsilon),$
 there is a unique constant mean
curvature hypersurface $S_z\in \tilde{X} $ passing through $z.$
Each $(S_z,\tilde{g})$ can be parametrized by a harmonic
diffeomorphism from standard sphere $(\mathbb{S}^3,\bar{g})$ to
it, since the (induced) metrics $\tilde{g}$ and $\bar{g}$ is very
close. The coordinate function of factor $\mathbb{R}\ni s$ can
also be uniquely chosen in the following way: let
$$ area(\mathbb{S}^{3}\times \{s\},\tilde{g})=vol(\mathbb{S}^{3})r(s)^{3},$$
we require function  $s$  satisfies
$$Vol(\mathbb{S}^{3}\times
[s_1,s_2],\tilde{g})=vol(\mathbb{S}^{3})\int_{s_1}^{s_2}r(s)^{3}ds.$$
Notice that the above harmonic diffeomorphisms are unique up to a
rotations of $(\mathbb{S}^3,\bar{g}),$ since the induced metrics
are close to the standard one. We require if $\bar{V}$ is an
infinitesimal rotation on $(\mathbb{S}^3\times \{z\},\bar{g}),$
and $W$ is the unit vector field which is $\tilde{g}$ orthonormal
to the sphere $\mathbb{S}^3\times \{z\},$ then
\begin{equation}\label{e2}\int_{S^3\times
\{z\}}\bar{g}(\bar{V},W)=0.\end{equation}

The above  parameterization $\Phi:\mathbb{S}^3\times
(A,B)\rightarrow \tilde{X}$ can be extended on one end so that it
covers  all points outside a ball of radius $D(\epsilon)+1$
centered at $x_0.$ Without loss of generality, we assume as
$z\rightarrow B,$ the points on the manifold $\tilde{X}$ divergent
to infinity.

 Let $\hat{g}=\Phi^{\ast}\tilde{g}.$ Let $\gamma\in \Gamma,$ and $\hat{\gamma}=\Phi^{-1}\gamma\Phi.$
 Since $\gamma$ is an isometry of
$\tilde{g},$ it sends constant mean curvature spheres to constant
mean curvature spheres. So $\hat{\gamma}$ preserves the foliation
of the horizontal spheres. So the uniqueness of harmonic maps in
this case implies $\hat{\gamma}$ is isometry in $\mathbb{S}^3$
factor.
 The specific choice of coordinate $s\in
\mathbb{R}$ implies the $\mathbb{R}$ component of $\hat{\gamma}$
is an isometry of $\mathbb{R},$ and independent of the factor
$\mathbb{S}^3.$ The (\ref{e2}) straighten out the rotations so
that they align themselves to a global isometry of the standard
$\mathbb{S}^3\times \mathbb{I},$ $\mathbb{I}=(A,B).$ So the group
$\hat{\Gamma}=\Phi^{-1}\Gamma \Phi$ acts isometrically on
$\mathbb{S}^3\times \mathbb{I}$ with the standard metric. We claim
 the $\mathbb{R}$ factors of $\hat{\Gamma}$ have no translations.
 Indeed, suppose there is one $\hat{\gamma}\in \hat{\Gamma}$ such that the $\mathbb{R}$ factor of $\hat{\gamma}$ is a
 translation $s\mapsto s+L$ with $L>0.$ Otherwise we consider $\hat{\gamma}^{-1}$.
 So any point in finite region will be mapped to very far by
 $\hat{\gamma}^{m}$ as $m\rightarrow \infty.$ Since the
 $\hat{\gamma}^{m}$ are isometries, and the manifold at infinity
 splits off a line, we conclude the curvature operator is not
 strictly positive in finite region. This is a contradiction with
 our assumption. The $\mathbb{R}$ factors of $\hat{\Gamma}$ also contain  no reflections,
 otherwise the manifold will contain two ends and splits off a line
 globally. So we conclude that $\hat{\Gamma}$ only acts on the
 factor $\mathbb{S}^3.$ This implies that the above
 parametrization descents to a parametrization $\phi: \mathbb{S}^3/\Gamma\times
 (A,B)\rightarrow X.$

 Since as $\varepsilon\rightarrow0,$ after normalizations,
  metric $\hat{g}$ will converge in $C^{\infty}_{loc}$ topology to the standard one. This implies the following fact: for any given $\varepsilon>0,$
 there is $\tilde{\varepsilon}>0$ such that if  $\epsilon<\tilde{\varepsilon},$  then for any point $P\in \mathbb{S}^3\times
 (A,B),$ the metric $\hat{g}$ on $\mathbb{S}^3\times
 (A,B)$  around $P$ is $\varepsilon-$ close to the standard one after
 scaling with the  factor $\hat{R}(P).$

We can also show the point $O$ has distance $\leq
\sqrt{D(\epsilon)}+1$ with $x_0.$ Indeed, if $d_0(x_0,O)\geq
\sqrt{D(\epsilon)}+1,$ then $O$ is covered by the parameterization
$\Phi:\mathbb{S}^3\times (A,B)\rightarrow \tilde{X}.$ Let
$O=\Phi(\bar{x},\bar{s}),$ $\bar{x}\in \mathbb{S}^3,$ $\bar{s}\in
(A,B).$ Since the group $\hat{\Gamma}$ only acts on the
 factor $\mathbb{S}^3,$ we conclude that $\hat{\Gamma}$ fixes every point on $\{\bar{x}\}\times (A,B).$ This is a contradiction.

Now we are ready to prove the theorem. For the given
$\varepsilon>0,$ there is a $\tilde{\varepsilon}>0$ defined in the
above. For any point $x\in X$ with $d_0(O,x)\geq
2D(\frac{1}{2}\tilde{\varepsilon}),$ a suitable portion
$\mathbb{S}^3/\Gamma\times (A'B')$ of $\mathbb{S}^3/\Gamma\times
(A,B)$ in the above parametrization will give a $\varepsilon-$ neck
neighborhood of $x.$ Let $\tilde{x}\in \tilde{X}$ satisfy
$d_0(\tilde{x},O)=10D(\frac{1}{2}\tilde{\varepsilon}),$ denote the
constant mean curvature hypersurface passing through $\tilde{x}$ by
$\Sigma.$ By Theorem G1.1 in \cite{Ha97}, $\Sigma$ bounds a open set
$\Omega$ which is  differentiable ball $\mathbb{B}^4$ in
$\tilde{X}.$ $\Omega$ is $\Gamma-$ invariant, and  $\Omega/\Gamma$
contains an $\varepsilon-$ neck with its end. The curvature estimate
on $\Omega/\Gamma$ follows from the above Proposition
\ref{elliptic}. Thus we only need to show $\Omega/\Gamma$ is
diffeomorphic to the orbifold cap $C_{\Gamma}$ of type I.

Let $\varphi: X\rightarrow \mathbb{R}$ be the Busemann function at
time $t=0$ on $X$ constructed around the singular point $O.$ Let
$u_{\delta}$ be a family of strictly convex smooth perturbation of
$\varphi$ as in Proposition \ref{elliptic} such that
$u_{0}=\varphi.$ By considering the integral curves of
$u_{\delta}$ as in Proposition \ref{elliptic}, one can show  the
level sets  $u_{\delta}^{-1}(c)$ of $u_{\delta}$ diffeomorphic to
$C_{\Gamma}.$

  Let
$f$ be the function of coordinate $\mathbb{R} $ on the
parametrization $\phi:\mathbb{S}^3/\Gamma\times (A,B)\rightarrow
X.$ By a geometric argument, one can show $\nabla \varphi$ is
almost parallel (with error controlled by $\varepsilon$) to the
$\nabla f,$ and so does $\nabla u_{\delta}$ for small $\delta.$ By
blending the function $u_{\delta}$ and a multiple of $f$ by a bump
function, we get a function $\psi,$ whose gradient curves gave a
diffeomorphism from $f^{-1}(-\infty,c']$ and
$u_{\delta}^{-1}(-\infty,c]$ by Morse theory. This particularly
shows that $\Omega/\Gamma$ is diffeomorphic to $C_{\Gamma}.$
 The
proof of the theorem is completed.
\end{proof}

We summarize the results obtained in this section:
\begin{theorem} \label{t3.10}
 For every $\varepsilon>0$ one can find positive constants
$C_1=C_1(\varepsilon)$, $C_2=C_2(\varepsilon),$ such that for
every four-dimensional ancient $\kappa$-\textbf{orbifold} solution
$(X,g_t),$   for each point $(x,t),$ there is a radius $r$,
$\frac{1}{C_1}(R(x,t))^{-\frac{1}{2}}
<r<C_1(R(x,t))^{-\frac{1}{2}}$, so that some open neighborhood
$B_t(x,r)\subset B\subset B_t(x,2r)$ falls into one of the
following two categories:

(a) $B$ is an \textbf{evolving $\varepsilon$-neck} around $(x,t),$

(b) $B$ is an $\textbf{evolving $\varepsilon $-cap},$

(c) $X$ is diffeomorphic to a closed spherical orbifold
$\mathbb{S}^4/\Gamma$ with at most isolated singularities.

Moreover, the scalar curvature in $B$ at case (a) and (b) at time
$t$ is between $C^{-1}_2R(x,t)$ and $C_2R(x,t).$

\end{theorem}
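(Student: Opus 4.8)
The plan is to read the statement off the case analysis already carried out in Subsection \ref{sec3.2.1} and in the present section, organized by the dichotomy provided by Hamilton's strong maximum principle. First note that the restricted isotropic curvature pinching built into Definition \ref{def1} forces the curvature operator to be positive semi-definite at every space-time point: since $R>0$ gives $a_{3},c_{3}>0$, the pinching $a_{3}\leq\Lambda a_{1}$, $c_{3}\leq\Lambda c_{1}$ yields $a_{1},c_{1}>0$, and then, using $b_{3}^{2}\leq a_{1}c_{1}$, the Schur complement bound $\langle BC^{-1}B^{t}v,v\rangle\leq b_{3}^{2}c_{1}^{-1}|v|^{2}\leq a_{1}|v|^{2}\leq\langle Av,v\rangle$ gives $\mathcal{R}\geq 0$. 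Hence, by Hamilton's strong maximum principle \cite{Ha86}, either $\mathcal{R}$ has a nontrivial null eigenvector at some space-time point, in which case it does so everywhere, or $\mathcal{R}>0$ everywhere.

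Suppose first that $\mathcal{R}$ has a nontrivial null eigenvector. Then, exactly as in Case 1 of the proof of Proposition \ref{elliptic}, $X$ is locally isometric to $\mathbb{S}^{3}\times\mathbb{R}$ and the scalar curvature is constant in space and time, while Theorem \ref{t3.4} identifies $X$ globally as one of $(\mathbb{S}^{3}/\Gamma)\times\mathbb{R}$, a smooth cap $C_{\Gamma'}^{\sigma}$, or the type II orbifold cap $\mathbb{S}^{4}/(x,\pm x')\setminus\bar{\mathbb{B}}$. In the first case every point lies in an evolving $\varepsilon$-neck, indeed a $0$-neck after rescaling; in the other two cases points far out in the single end lie in an evolving $\varepsilon$-neck and the remaining points lie in an evolving $\varepsilon$-cap (a smooth cap, resp.\ one of type II). Since $R$ is constant, the radius constraint holds with $C_{1}$ of order $\varepsilon^{-1}$ and the curvature pinch on $B$ holds with $C_{2}=1$, so $(X,g_{t})$ is of type (a) or (b) at every point.

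Next suppose $\mathcal{R}>0$ everywhere, and split into three subcases. If $X$ is a smooth manifold, Theorem 3.8 of \cite{CZ05F} recalled above produces, at each point, an evolving $\varepsilon$-neck, an evolving $\varepsilon$-cap modeled on $\mathbb{B}^{4}$ or $\mathbb{R}\mathbb{P}^{4}\setminus\bar{\mathbb{B}}$, or the conclusion that $X$ is closed with positive curvature operator, hence diffeomorphic to $\mathbb{S}^{4}$ or $\mathbb{R}\mathbb{P}^{4}$, a closed spherical orbifold with no singularities; so we land in (a), (b) or (c). If $X$ is compact and has genuine orbifold singularities, then by the work of Hamilton together with the $\kappa$-noncollapsing and the compactness theorem of \cite{L}, as in Case 2 of the proof of Proposition \ref{elliptic}, the forward evolution converges to a round limit, so $X$ is diffeomorphic to a compact constant curvature $4$-orbifold with at most isolated singularities, that is, to some $\mathbb{S}^{4}/\Gamma$, which is case (c). Finally, if $X$ is noncompact with orbifold singularities, Corollary \ref{3.8} shows $X$ has exactly one singularity and is diffeomorphic to $\mathbb{R}^{4}/\Gamma$, and then Theorem \ref{t3.9} gives at each point an evolving $\varepsilon$-neck or an evolving $\varepsilon$-cap of type I, with the stated radius and curvature bounds.

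Two routine points remain. The radius estimate in Theorem 3.8 of \cite{CZ05F} is stated only as $r<C_{1}(R(x,t))^{-1/2}$, but the matching lower bound $r>C_{1}^{-1}(R(x,t))^{-1/2}$ is automatic, since in the neck and cap alternatives the canonical construction already yields $r$ comparable to $\varepsilon^{-1}(R(x,t))^{-1/2}$ while in the closed alternative (c) no radius is imposed; and the scalar curvature bound on $B$ in cases (a) and (b) follows from $B$ being, after rescaling, $\varepsilon$-close to a fixed bounded portion of the model together with the elliptic estimate of Proposition \ref{elliptic}. I expect the only point requiring genuine care to be organizational: checking that the three subcases of the positive curvature operator case truly exhaust the possibilities and land in the advertised categories. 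The subtlest of these is the compact-with-singularities subcase, which is covered neither by Theorem \ref{t3.4} (needing a null eigenvector) nor by Theorem \ref{t3.9} (needing noncompactness), and where one must lean on Proposition \ref{elliptic} to rule out hidden collapsing and force the rounding.
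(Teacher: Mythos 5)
Your proposal is correct and follows essentially the same route as the paper: reduce via Hamilton's strong maximum principle to the null-eigenvector dichotomy, dispatch that case with Theorem \ref{t3.4}, cite Theorem 3.8 of \cite{CZ05F} and Theorem \ref{t3.9} for the smooth and noncompact-singular positive-curvature-operator subcases, and handle the remaining compact positive-curvature-operator subcase by the forward Ricci-flow rounding argument (as in Case 2 of Proposition \ref{elliptic}), which is exactly what the paper does. One small slip: in the null-eigenvector case the scalar curvature is constant in space at each fixed time but not in time (the $\mathbb{S}^3$ factor shrinks under the flow), though this does not affect your argument.
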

\begin{proof}
By Theorem \ref{t3.4} and Theorem \ref{t3.9}, we only need to
consider the case when $X$ is compact with positive curvature
operator. In this case, we continue to evolve the metric by Ricci
flow. Since the scalar curvature is strictly positive, the solution
will blow up in finite time.  By using the $\kappa-$noncollapsing in
\cite{P1} and the compactness theorem in \cite{L}, we can scale the
solution in space time around a sequence of points and extract a
convergent subsequence. Moreover, the limit is still an orbifold
with at most isolated singularities by \cite{L}. By the pinching
estimate of Hamilton \cite{Ha86}, the Riemannian metric in the limit
orbifold has constant sectional curvature.  So it is a global
quotient of sphere.
\end{proof}

\section{Surgerical solutions}
\label{4}
\subsection{Surgery at first singular time}\label{4.1}
Since the scalar curvature at initial time is strictly positive, it
follows from the maximum principle and the evolution equation of the
scalar curvature that the curvature must blow up at some finite time
$0<T<\infty$. Note that the canonical structures of ancient
$\kappa-$orbifold solutions have been completely described in the
last section. Combining with a technical geometric lemma
(Proposition \ref{p5.1} in the appendix), we have the similar
singularity structure theorem before time $T$ as in the manifold
case (see Theorem 4.1 in \cite{CZ05F}).

\begin{theorem} \label{t4.1}
Given small $\varepsilon>0$, there is  $r=r(T)>0$ depending on
$\varepsilon, T$ and the initial metric such that for any point
$(x_0,t_0)$ with $Q=R(x_0,t_0)\geq r^{-2}$, the solution in the
parabolic region $\{(x,t)\in X\times [0,T)|
d^2_{t_0}(x,x_0)<\varepsilon^{-2} Q^{-1},t_0-\varepsilon^{-2}
Q^{-1}<t\leq t_0\}$ is, after scaling by the factor $Q$,
$\varepsilon$-close (in $C^{[\varepsilon^{-1}]}$-topology) to the
corresponding subset of some ancient $\kappa$-orbifold solution with
restricted isotropic curvature pinching (\ref{ricp}) and with at
most isolated orbifold singularities.
\end{theorem}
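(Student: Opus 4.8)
The plan is to argue by contradiction, transcribing the proof of Theorem 4.1 of \cite{CZ05F} to the orbifold setting, with the orbifold compactness theorem of \cite{L} in place of Hamilton's compactness theorem, Theorem \ref{t3.10} in place of the manifold canonical neighborhood theorem, and Proposition \ref{p5.1} supplying the geometric control at the orbifold points. Suppose the assertion fails for some fixed $\varepsilon>0$. Then there are a sequence $r_k\to 0$ and points $(x_k,t_k)\in X\times[0,T)$ with $Q_k:=R(x_k,t_k)\ge r_k^{-2}$ such that, after rescaling by $Q_k$, the parabolic region around $(x_k,t_k)$ is not $\varepsilon$-close to the corresponding subset of any ancient $\kappa$-orbifold solution satisfying (\ref{ricp}). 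Running Perelman's point-selection together with an induction on the curvature scale, as in \S12 of \cite{P2} and Theorem 4.1 of \cite{CZ05F}, after replacing $(x_k,t_k)$ by nearby points we may assume that $(x_k,t_k)$ is essentially a \emph{first} violation of the conclusion: every space-time point $(x,t)$ with $t\le t_k$, $R(x,t)\ge 2Q_k$, and lying within distance $\sim A\,Q_k^{-1/2}$ of $x_k$ (for any fixed $A$, later sent to $\infty$) does satisfy the asserted closeness and hence, by Theorem \ref{t3.10}, admits a canonical neighborhood of neck, cap, or closed-spherical-orbifold type. Since the curvature is bounded on $[0,t']$ for every $t'<T$, $Q_k\ge r_k^{-2}\to\infty$ forces $t_k\to T$, so $Q_kt_k\to\infty$; set $\tilde g_k(\cdot,t)=Q_k\,g\big(\cdot,\,t_k+Q_k^{-1}t\big)$, an orbifold Ricci flow on $[-Q_kt_k,0]$.

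The heart of the matter is to establish, for $\tilde g_k$, uniform $C^\infty_{loc}$ curvature bounds on $B(x_k,A)$ for each fixed $A$, on time intervals growing to $(-\infty,0]$, together with uniform $\kappa$-noncollapsing. The mechanism is the familiar one: by the point-selection, every point of sufficiently large curvature in the region under consideration has a canonical neighborhood, and one propagates curvature bounds outward along chains of necks exactly as in the manifold case, using Hamilton's pinching inequality (\ref{pinch}) -- which after rescaling by $Q_k\to\infty$ degenerates to the restricted pinching (\ref{ricp}) and keeps $a_3,b_3,c_3$ comparable to $a_1$ -- together with the elliptic-type estimate of Proposition \ref{elliptic} for the local ancient models, and the $\kappa$-noncollapsing of Lemma \ref{ka}, which applies after rescaling since $Q_k\to\infty$ and which rules out local collapsing and, combined with the distance-distortion and Harnack estimates as in \cite{CZ05F}, lets one extend the bounds backward in time.

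With these bounds and noncollapsing in hand, the orbifold compactness theorem of \cite{L} extracts a subsequence of $\big(X,\tilde g_k(\cdot,t),x_k\big)$ converging in the $C^\infty_{loc}$ sense to a complete ancient solution $\big(X_\infty,g_\infty(\cdot,t),x_\infty\big)$ on $t\in(-\infty,0]$, on a four-orbifold with at most isolated singularities, with $R(x_\infty,0)=1$, bounded curvature on each time slice, $\kappa$-noncollapsed on all scales, positive isotropic curvature, and satisfying (\ref{ricp}). Thus $X_\infty$ is an ancient $\kappa$-orbifold solution in the sense of Definition \ref{def1}. Consequently, for $k$ large the rescaled solution around $(x_k,t_k)$ is $\varepsilon$-close to the corresponding subset of $\big(X_\infty,g_\infty\big)$, contradicting the choice of $(x_k,t_k)$ and proving the theorem.

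The step I expect to be the main obstacle is the curvature estimate near the orbifold points. In the manifold case one works with genuine product necks $\mathbb{S}^3\times\mathbb{I}$ and applies Hamilton's compactness theorem directly; here a chain of canonical neighborhoods may involve necks $\mathbb{S}^3/\Gamma\times\mathbb{I}$ with a priori large uniformization groups $\Gamma$, and the limit may genuinely acquire an isolated singularity. Controlling this requires, as in the proof of Theorem \ref{t3.9}, passing to local uniformizing covers, using Hamilton's canonical parametrization of necks to align the $\Gamma$-actions to standard isometries of $\mathbb{S}^3\times\mathbb{I}$, and invoking $\kappa$-noncollapsing on the cover to bound the orders of the uniformization groups; Proposition \ref{p5.1} is precisely the device that makes the outward propagation of curvature bounds through such orbifold necks uniform. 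Once this geometric control is in place, the remainder is a routine transcription of the manifold argument.
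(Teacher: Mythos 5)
Your proposal follows essentially the same route as the paper: argue by contradiction with Perelman-style point-selection, rescale around an almost-critical violating point, propagate curvature bounds via canonical neighborhoods (Theorem~\ref{t3.10}), invoke $\kappa$-noncollapsing (Lemma~\ref{ka}) and Lu's orbifold compactness theorem~\cite{L} to extract a convergent subsequence, and use Proposition~\ref{p5.1} to ensure the limit has bounded curvature, yielding an ancient $\kappa$-orbifold solution and hence the contradiction. One small nuance: in the paper Proposition~\ref{p5.1} is invoked specifically to conclude that the \emph{limit space} has bounded curvature (ruling out a sequence of thinner and thinner necks escaping to infinity in a noncompact nonnegatively curved limit), whereas you frame it as controlling the outward propagation of bounds through orbifold necks during the rescaling — these amount to the same use, but the paper isolates the boundedness-of-curvature-on-the-limit step as where~\ref{p5.1} enters.
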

\begin{proof} First of all, we may
assume the orbifold is not diffeomorphic to a spherical orbifold
$\mathbb{S}^4/\Gamma,$ otherwise we are in case c) in Theorem
\ref{t3.10}. We argue by contradiction as in manifold case
\cite{CZ05F}.

 We choose a
 point $(x_0,t_0)$ almost critically violating the conclusion of the
theorem.  We scale the solution around $(x_0,t_0)$ with factor
$R(x_0,t_0)$ and shift the time $t_0$ to $0.$ The key point of the
proof is to bound the curvature.   Note that we still have
$\kappa-$noncollapsing condition (Lemma\ref{ka}), and compactness
theorem \cite{L} for $\kappa-$noncollapsed Ricci flow solutions on
orbifolds with isolated singularities.  By the canonical
neighborhood decomposition theorem for ancient $\kappa-$ solutions,
we can show the curvature is bounded in bounded normalized  distance
with $x_0.$ The boundedness of curvature on the limit space follows
from Proposition \ref{p5.1}. We have all the ingredients we need to
mimic the same proof in the manifold case \cite{CZ05F} to show that
we can extract a convergent subsequence which converges to an
ancient $\kappa$-orbifold solution. This is a contradiction.
\end{proof}

We denote by $\Omega$ the open set of points where curvature become
bounded as $t\rightarrow \infty.$ Denote by $\bar{g}$ the limit of
$g_t$ on $\Omega$ as $t\rightarrow T.$

 Fix $0<\delta<<\varepsilon,$ and let $\rho=\rho(T)=\delta r(T),$
    and $\Omega_{\rho}=\{x\in X\mid \bar{R}\leq \rho^{-2}\}$.
If $\Omega_{\rho}$ is empty, then by Theorem \ref{t4.1} and
Theorem \ref{t3.10}, $X$ is either diffeomorphic to a spherical
orbifold $\mathbb{S}^{4}/\Gamma$ with at most isolated
singularities, or $X$ is covered by $\varepsilon-$ necks and
$\varepsilon-$ caps. For the latter case, if there occurs no caps,
$X$ is covered by $\varepsilon-$ necks, hence diffeomorphic to
$\mathbb{S}^3/\Gamma\times \mathbb{S}^1$ or
$\mathbb{S}^3/\Gamma{\times}_{f}\mathbb{S}^1$; if there are caps,
we have four types of caps: $C_{\Gamma}^{\sigma},$ $C_{\Gamma},$
$\mathbb{S}^{4}/(x,\pm x')\setminus \bar{\mathbb{B}}^4,$
$\mathbb{B}^4$ and hence $X$ is diffeomorphic to either smooth
manifolds $\mathbb{S}^4,$ $\mathbb{R}\mathbb{P}^4,$
$C_{\Gamma}^{\sigma}\cup_{f}
  C_{\Gamma'}^{\sigma'},$or one of the orbifolds
  $C_{\Gamma}^{\sigma}\cup_{f}
  C_{\Gamma'},$ $C_{\Gamma}\cup_{f} C_{\Gamma'},$ $\mathbb{S}^4/(x,\pm
  x'),$ $\mathbb{S}^4/(x,\pm x')\#\mathbb{R}\mathbb{P}^4,$ $\mathbb{S}^4/(x,\pm x')\# \mathbb{S}^4/(x,\pm x').$
So we conclude that if $\Omega_{\rho}$ is empty, the $X$ is
diffeomorphic to a spherical orbifold $\mathbb{S}^{4}/\Gamma$ with
at most isolated singularities or a connected sum of
  two spherical
   orbifolds $\mathbb{S}^4/\Gamma_1$ and $\mathbb{S}^4/\Gamma_2$ with at most isolated
   singularities. While if the solution, near the time $T$, has
   positive curvature operator, it follows from the proof of Theorem
   \ref{t3.10} that $X$ is diffeomorphic to a spherical orbifold with
at most isolated singularities. Thus, when $\Omega_{\rho}$ is empty
or the solution becomes to have positive curvature operator
everywhere, we stop the the procedure here and say that the solution
becomes extinct.

We then may assume that $\Omega_{\rho}\neq \phi$ and any point
outside $\Omega_{\rho}$ has a $\varepsilon-$neck or
$\varepsilon-$cap neighborhood. We are interested in those
$\varepsilon-$horns $H$ ( consisting of $\varepsilon-$necks) whose
one end is in $\Omega_{\rho}$ and the curvature becomes unbounded on
other end. We will perform surgeries on these horns. First of all,
we need the existence of finer necks(than $\varepsilon$) in the
$\varepsilon-$horn $H.$ The reason to find a finer neck to perform
surgeries is to quantitatively control the accumulations of the
errors caused by surgeries.

\begin{proposition}\label{p4.2}For the arbitrarily given small $0<\delta<<\varepsilon,$
there is an $0<h< \delta\rho$ depending only on $\delta$ and
$\varepsilon,$ and independent of non-collapsing parameter $\kappa$
such that if a point $x$ on the $\varepsilon-$horn $H$ whose finite
end is in $\Omega_{\rho}$ has curvature $\geq h^{-2},$ then there is
a $\delta-$neck around it.
\end{proposition}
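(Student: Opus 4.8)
The plan is to argue by contradiction, running the scheme of the manifold case \cite{CZ05F}. If the assertion fails for the given $\delta$, then, since $\bar R$ is unbounded along the $\varepsilon$-horn $H$, there is a sequence of points $x_\alpha\in H$ with $Q_\alpha:=\bar R(x_\alpha)\to+\infty$, none of which is the center of a $\delta$-neck. Rescale the solution in space-time about $(x_\alpha,T)$ by the factor $Q_\alpha$ and shift $T$ to $0$, obtaining flows $\tilde g_\alpha(t)$ on $[-Q_\alpha T,0]$ with $\tilde R(x_\alpha,0)=1$. The goal is to show that these subconverge to the standard shrinking round cylinder over $\mathbb S^3/\Gamma$, which for large $\alpha$ would make $x_\alpha$ the center of a $\delta$-neck, a contradiction. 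Since every constant that enters depends only on $\delta$, $\varepsilon$ and the universal constants attached to ancient $\kappa$-solutions, the resulting threshold $h$ is independent of the non-collapsing parameter $\kappa$, as required.

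The first step is a curvature estimate at bounded normalized distance: for each $D<\infty$ there is $K(D)<\infty$, depending only on $D$ and $\varepsilon$, with $\tilde R\le K(D)$ on $\tilde B_0(x_\alpha,D)\times[-D,0]$ for all large $\alpha$. By Theorem~\ref{t4.1} every point of $\tilde g_\alpha$ of sufficiently large curvature lies in a canonical neighborhood on which the flow is $\varepsilon$-close in $C^{[\varepsilon^{-1}]}$ to an ancient $\kappa$-orbifold solution, and on such a solution Proposition~\ref{elliptic} controls the curvature at a point by $\tilde R(x_\alpha,0)=1$ and the squared normalized distance; chaining canonical neighborhoods out to distance $D$, exactly as in \cite{CZ05F}, propagates the bound. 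The point relevant to $\kappa$-independence is that the ancient $\kappa$-orbifold solutions serving as models are $\kappa_0$-noncollapsed for a \emph{universal} $\kappa_0$ (Theorem~3.5 of \cite{CZ05F}); the $C^{[\varepsilon^{-1}]}$-closeness then transfers this noncollapsing, on the relevant scales, to $\tilde g_\alpha$, with $\kappa_0$ not depending on the original solution's $\kappa$.

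With the uniform curvature bound and uniform $\kappa_0$-noncollapsing, the orbifold compactness theorem \cite{L} gives a subsequential pointed smooth limit $(X_\infty,g_\infty(t),x_\infty)$, a complete ancient flow on a $4$-orbifold with isolated singularities; as in \cite{CZ05F} the limit has bounded curvature and inherits the restricted isotropic pinching \eqref{ricp}, hence is an ancient $\kappa_0$-orbifold solution in the sense of Definition~\ref{def1}, and \eqref{ricp} forces its curvature operator to be nonnegative (for $v=(u,w)$ one has $\langle\mathcal R v,v\rangle\ge a_1|u|^2-2b_3|u||w|+c_1|w|^2\ge(\sqrt{a_1}\,|u|-\sqrt{c_1}\,|w|)^2\ge0$). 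Because $x_\alpha$ sits in the interior of a horn whose finite end lies in $\Omega_\rho$ and whose other end has curvature tending to infinity, $X_\infty$ is covered by $\varepsilon$-necks and is noncompact: the normalized distances to both ends of the horn diverge, since along a chain of $\varepsilon$-necks the curvature changes by a factor only $1\pm c\varepsilon$ over normalized length $\sim\varepsilon^{-1}$ while it must run from $0$ at the finite end to $+\infty$ at the degenerate end. In particular $X_\infty$ is a smooth manifold (orbifold points can only lie in cap cores, not on necks). If its curvature operator were everywhere positive, Cheeger--Gromoll would force exactly one end, which is incompatible with being an infinite chain of $\varepsilon$-necks; hence the curvature operator has a null eigenvector somewhere, and Theorem~\ref{t3.4}(i) applies: $X_\infty$ is $(\mathbb S^3/\Gamma)\times\mathbb R$ or $C_{\Gamma'}^{\sigma}$, and since the latter has only one end, $X_\infty=(\mathbb S^3/\Gamma)\times\mathbb R$; as the null direction is parallel, the metric is the standard round product, up to scaling. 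Consequently, for all large $\alpha$ the rescaled flow is $\delta$-close in $C^{[\delta^{-1}]}$ to the evolving round cylinder on a normalized ball of radius $\gg\delta^{-1}$ about $x_\alpha$, so $x_\alpha$ is the center of a $\delta$-neck, the desired contradiction.

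The main obstacle is the first step: extracting the limit while keeping all constants, above all the non-collapsing constant, independent of $\kappa$. The two subtleties there are that the elliptic estimate Proposition~\ref{elliptic} is only available \emph{on} canonical neighborhoods, so it must be propagated by a chaining argument, and that the noncollapsing used in the compactness theorem must be the universal $\kappa_0$ of ancient $\kappa$-solutions rather than the solution's own $\kappa$. Both are handled precisely as in the manifold case of \cite{CZ05F}; the only genuinely new feature is the possible presence of isolated orbifold singularities, which cannot occur on necks and is absorbed by the orbifold compactness theorem \cite{L}.
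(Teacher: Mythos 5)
There is a genuine gap in the place where you try to make the argument $\kappa$-independent, and the paper explicitly flags this as the crux (see the remark immediately preceding Proposition~\ref{p4.2}: ``The argument is a bit different from Lemma 5.2 in \cite{CZ05F}. The reason is that, the canonical neighborhoods in \cite{CZ05F} are universally non-collapsed, but in the present situation we do not know it a priori.''). You assert that ``the ancient $\kappa$-orbifold solutions serving as models are $\kappa_0$-noncollapsed for a \emph{universal} $\kappa_0$ (Theorem~3.5 of \cite{CZ05F}),'' and then transfer that noncollapsing to $\tilde g_\alpha$ via $C^{[\varepsilon^{-1}]}$-closeness so the orbifold compactness theorem \cite{L} can be applied. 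This is false: Theorem~3.5 of \cite{CZ05F} gives universal noncollapsing only for smooth ancient $\kappa$-solutions. An ancient $\kappa$-\emph{orbifold} solution such as the round shrinking cylinder over $\mathbb{S}^3/\Gamma$ has noncollapsing constant on the order of $|\Gamma|^{-1}$, which is not universal; indeed, the proof of Proposition~\ref{elliptic} only obtains the universal $\kappa_0$ after passing to the smooth cover $\mathbb{R}^4$ or $\mathbb{S}^4$, not on the orbifold itself. So your Step~1 does not deliver a $\kappa$-independent noncollapsing constant for the rescaled flows on $H$, and the subsequent extraction of a limit is unjustified at the level of $\kappa$-independence claimed in the statement.

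The paper's fix, which is the genuinely new idea here, is to lift the entire $\varepsilon$-horn to its smooth cover $\mathbb{S}^3\times(A,B)$ by Hamilton's canonical parametrization $\Phi:\mathbb{S}^3\times(A,B)\to H$, and to run the whole blowup argument there. On the cover the rescaled solutions are smooth and are $\varepsilon$-close to the round cylinder over the \emph{full} $\mathbb{S}^3$, hence uniformly noncollapsed by a constant depending only on $\varepsilon$, regardless of $|\Gamma|$ or the initial $\kappa$. One then proves bounded curvature at bounded distance on the cover (ruling out a nonflat nonnegatively curved cone via Hamilton's strong maximum principle), extracts a limit that splits a line by Toponogov, identifies it with standard $\mathbb{S}^3\times\mathbb{R}$ using the restricted pinching \eqref{restrictive}, and finally uses the $\Gamma$-invariance of the whole construction to descend the resulting $\delta$-neck structure back to $H$. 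Your outline is otherwise close in spirit to the paper's and to \cite{CZ05F}, but without the lift to $\mathbb{S}^3\times(A,B)$ the argument does not establish the required $\kappa$-independence.
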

 The argument is a bit different from Lemma 5.2 in \cite{CZ05F}. The reason is that, the canonical
 neighborhoods in \cite{CZ05F}
 are universally non-collapsed, but in the present situation we do not know it a priori.
 \begin{proof}There is a fixed point free
finite group of isometries $\Gamma\in ISO(\mathbb{S}^3)$ so that we
can apply Hamilton's parametrization to parametrize the whole $H$,
$\Phi_{\Gamma}: (\mathbb{S}^3/\Gamma)\times (A,B) \rightarrow H,$
where $\Phi_{\Gamma}$ is a diffeomorphism. Denote by $\Phi:
\mathbb{S}^3\times (A,B)\rightarrow H$ the natural projection.
Without loss of generality, we assume $\Phi(
\mathbb{S}^3\times\{s\})$ has nonempty intersection with
$\Omega_{\rho}$ as $s\rightarrow A,$ and curvature becomes unbounded
as $s\rightarrow B.$ To prove the claim, we argue by contradiction.
Suppose ${x_j}\in H$ is a sequence of points with
$\bar{R}({x_j})\geq h^{-2}\rightarrow \infty$ but $x_j$ has no
$\delta-$neck neighborhood. We pull back the solution to
$\mathbb{S}^3\times (A,B),$ scale with factor $\bar{R}(x_j)$ around
$x_j,$ shift the time $T$ to $0.$  Note the rescaled solution on
$\mathbb{S}^3\times(A,B) $ is smooth (without orbifold
singularities) and uniformly non-collapsed. We apply the same
argument of step 2 in Theorem 4.1 in \cite{CZ05F} to show that the
curvature is bounded in any fixed finite ball around point $x_j$ for
the rescaled solution, otherwise we get a piece of non-flat
nonnegatively curved metric cone as a blow up limit, which
contradicts with Hamilton's strong maximum principle(see
\cite{Ha86}). This implies the two ends of $\mathbb{S}^3\times
(A,B)$ are very far from point $x_j$ (in the normalized distance).
We then extract (around $(\bar{x}_j,T)$) a convergent subsequence so
that the limit splits off a line  by the Toponogov splitting
theorem. By (\ref{restrictive}), the limit is the standard
$\mathbb{S}^3\times \mathbb{R}.$ Since the solution is $\Gamma$
invariant, it descends to $H$ and gives a $\delta-$neck around $x_j$
as $j$ large enough. This is a contradiction.
\end{proof}

Let us describe the Hamilton's surgery along the $\delta-$neck $N$
with scalar curvature $h$ in the center $\bar{x}.$  We assume the
normalization (of $ \mathbb{S}^3\times (A,B)$ with some factor) is
so that the metric $h^{-2}\Phi^{*}g$ on $(\bar{x},s_0)\in
\mathbb{S}^3\times (A,B)$ is $\delta'$ close to standard neck metric
$ds^2=dz^2+ds^2_{\mathbb{S}^3}$ on $ \mathbb{S}^3\times\mathbb{R}$
of scalar curvature 1, where $\delta'=\delta'(\delta)$ satisfies
$\lim\limits_{\delta\rightarrow0}\delta'=0.$   We assume the center
of the $\delta-$neck has $\mathbb{R}$ coordinate $z=0.$ The surgery
is to cut open the neck (in Hamilton's parametrization) and glue
back  caps $(\mathbb{B}^4,\tilde{g})$ by conformal pinching the
metric $\bar{g}$ and bending it with the standard cap metric (see
\cite{CZ05F} and \cite{Ha97}). We describe the construction on the
left hand (of coordinate $\mathbb{R}$)(corresponding to the finite
part connectting to $\Omega_{\rho}$)
$$
   \tilde{g} = \left\{
   \begin{array}{lll}
    \bar{g}, \ \ \ z= 0,
         \\[4mm]
    e^{-2f}\bar{g}, \ \ \ z \in [0,2], \\[4mm]
    \varphi e^{-2f}\bar{g} + (1-\varphi)e^{-2f}h^2g_0, \ \ \ z \in [2,3], \\[4mm]
    h^2e^{-2f}g_0, \ \ \ z\in [3,c'],
\end{array}
\right.
$$
where $f$ is some fixed function and $g_0$ is the standard metric.
We also perform the same surgery procedure on the right hand with
parameters $\tilde{z}\in [0,4]$ ($\tilde{z}=8-z$).

 Since the group $\Gamma$ acts isometrically on the factor
$\mathbb{S}^3$ of $\mathbb{S}^3\times\mathbb{R} ,$ the above surgery
procedure on  Hamilton's parametrization descends to a surgery on
the space $X$ by cutting off a $\delta-$neck and gluing back two
orbifold caps $C_{\Gamma}$ separately.  We call the above procedure
as a $\textbf{$\delta$-cutoff surgery}$.

 Now
at least the proof of justification of pinching estimates of
Hamilton can be carried through without changing a word.
\begin{lemma}\label{jup} (Hamilton \cite{Ha97} D3.1, Justification
of the pinching assumption)

 There are universal positive constants $\delta_0$,  such that for any $\tilde{T}$ there is a constant $h_0>0$
depending on the initial metric and $\tilde{T}$ such that if we
perform above $\delta$-cutoff surgery at a $\delta$-neck of radius
$h$ at time $T\leq\tilde{T}$ with $\delta < \delta_0$ and $h^{-2}
\geq h_0^{-2}$,  such that after the surgery, the pinching
condition (\ref{pinch}) still holds at all points at time $T$.
\end{lemma}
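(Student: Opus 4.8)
The plan is to follow Hamilton's argument for D3.1 of \cite{Ha97} (reproduced in the manifold case in \cite{CZ05F}); the only genuinely new point is that the $\delta$-cutoff surgery was arranged precisely so that none of the steps feels the orbifold structure. First I would observe that the surgery alters the metric only inside the $\delta$-neck $N$ on which it is performed. Outside $N$ the time-$T$ metric is untouched, and there the pinching conditions (\ref{pinch}) hold because they are preserved by the Ricci flow on the orbifold up to time $T$ --- this is the orbifold version of Theorem B1.1 and B2.3 of \cite{Ha97} quoted earlier, whose proof uses only the maximum principle, which is valid on closed orbifolds. Hence it suffices to verify (\ref{pinch}) at time $T$ on the two glued-in orbifold caps $C_\Gamma$.

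Second, I would pass to the smooth cover. As in the paragraphs around Proposition \ref{p4.2}, the $\delta$-cutoff surgery on $X$ is the $\Gamma$-equivariant descent of Hamilton's surgery performed on the smooth, uniformly non-collapsed piece $\mathbb{S}^3\times(A,B)$ obtained from Hamilton's canonical parametrization; the finite group $\Gamma$ acts isometrically on the $\mathbb{S}^3$ factor and commutes with the surgery since the latter is rotationally symmetric on $\mathbb{S}^3$. The curvature operator and the eigenvalues $a_i,b_i,c_i$ are pointwise and local-isometry invariant, so checking (\ref{pinch}) on $C_\Gamma$ is the same as checking it, with the surgery metric $\tilde{g}$, on the corresponding capped subset of $\mathbb{S}^3\times(A,B)$. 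This reduces the statement to exactly the manifold situation of Hamilton's D3.1.

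Third, I would carry out Hamilton's verification on the cap. After scaling by $h^{-2}$, the metric $\tilde{g}$ given by the explicit formula (the conformal factor $e^{-2f}\bar{g}$ on $z\in[0,2]$, the bump-function interpolation with $h^2 g_0$ on $z\in[2,3]$, the standard cap $h^2 e^{-2f}g_0$ on $z\in[3,c']$) is close --- with error tending to $0$ as $\delta\to 0$ --- to a fixed model: a round spherical cap smoothly joined to the round cylinder $\mathbb{S}^3\times\mathbb{R}$. On the cylindrical part of the model the curvature operator has $a_i=b_i=c_i=\mu>0$, so $b_3/\sqrt{(a_1+\rho)(c_1+\rho)}=\mu/(\mu+\rho)<1$ and $\max\{a_3,b_3,c_3\}=\mu<\Lambda(a_1+\rho)$; on the spherical part it is a positive multiple of the identity, so $b_i\equiv 0$, and the standard cap may be chosen with pinching ratio as close to $1$ as one likes (and, after enlarging $\Lambda$ and shrinking $\rho$ at $t=0$ if necessary, bounded by $\Lambda$ with room to spare). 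Thus the model satisfies (\ref{pinch}) with a strictness margin bounded below by quantities built from $\Lambda-1>0$, $\rho>0$, and the positive logarithmic slack $\Lambda e^{Pt}/\max\{\log\sqrt{(a_1+\rho)(c_1+\rho)},2\}$, and this margin does not deteriorate as the scale $\mu\sim h^{-2}$ grows. Choosing $\delta_0$ small enough that the model-closeness of $\tilde{g}$ fits inside this margin, and $h_0$ small enough that $h^{-2}\geq h_0^{-2}$ puts the surgery deep in the asymptotically-round regime, Hamilton's conformal-change computation then shows the curvature of $\tilde{g}$ lies in the pinching cone (\ref{pinch}) at every point of $C_\Gamma$; together with the first step, (\ref{pinch}) holds everywhere at time $T$ after surgery.

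The hard part is this last step, which is exactly Hamilton's: tracking how the curvature operator transforms under the conformal change $\bar{g}\mapsto e^{-2f}\bar{g}$ and through the interpolation with $h^2 g_0$, and confirming that every error term remains inside the pinching cone --- which is wide relative to the scale $h^{-2}$, precisely because the logarithmic slack permits $b_3$ up to essentially $\sqrt{a_1 c_1}$ while the round cylinder sits exactly on that boundary, and because $\Lambda>1$. That computation is insensitive to the orbifold quotient: it takes place on the smooth cover $\mathbb{S}^3\times(A,B)$ and is $\Gamma$-equivariant, and the only orbifold points that appear --- at the tips of the caps $C_\Gamma$ --- carry a metric descended from the standard rotationally symmetric cap, hence of constant curvature and trivially in the pinching cone.
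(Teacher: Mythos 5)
Your proposal is correct and follows the same route the paper takes: the paper's ``proof'' of this lemma is a single sentence asserting that Hamilton's D3.1 argument carries through unchanged, and your reduction---surgery only alters the metric inside the neck, the surgery descends $\Gamma$-equivariantly from a smooth capped cylinder $\mathbb{S}^3\times(A,B)$ where Hamilton's pointwise conformal-change computation applies verbatim, and curvature quantities are local-isometry invariants---is exactly the justification being implicitly invoked. One small imprecision: the margin in the third pinching inequality does deteriorate as $\mu\sim h^{-2}\to\infty$ (both $\mu/(\mu+\rho)$ and the logarithmic upper bound tend to $1$), but it deteriorates only logarithmically while the surgery errors are controlled in absolute size by $\delta$, which is why $\delta_0$ can be chosen uniformly; your phrasing should be that the logarithmic slack shrinks slowly enough to dominate the surgery error, not that it does not deteriorate.
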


\subsection{A priori assumptions} We can define the notion of Ricci
flow with surgeries in the same way as in \cite{CZ05F} by replacing
manifolds with orbifolds with at most isolated singularities. As in
\cite{CZ05F}, the solutions to Ricci flow with surgery in this paper
are obtained by performing concrete surgeries.  We cut open  a neck
in a horn and glue back two caps. This makes the all connected
components after surgeries are also closed orbifolds with at most
isolated singularities. Notice that each neck in the horn is
diffeomorphic to $\mathbb{S}^3/\Gamma.$ If $\Gamma$ is trivial, we
glue the usual caps $B^{4}$; if $\Gamma$ is no trivial, we glue back
orbifold caps $C_{\Gamma},$ this produces new orbifold singularities
(tips of the caps).

To understand the topology, we are interested in the solutions
with good properties. Namely, we would like to construct a long
time solution satisfying the a priori assumptions consisting of
the pinching assumption and the canonical neighborhood assumption.

 $\underline{\mbox{\textbf{Pinching assumption}}}$: \emph{There
exist positive constants $\rho,\Lambda,P<+\infty$ such that there
hold
$$a_1+\rho>0 \mbox{ and } c_1+\rho>0, \eqno (5.1)$$ $$\max\{a_3,b_3,c_3\}
\leq \Lambda(a_1+\rho) \mbox{ and }\max\{a_3,b_3,c_3\}\leq
\Lambda(c_1+\rho), \eqno (5.2)$$ and
$$\frac{b_3}{\sqrt{(a_1+\rho)(c_1+\rho)}}\leq 1+\frac{\Lambda
e^{Pt}}{\max\{\log\sqrt{(a_1+\rho)(c_1+\rho)},2\}}, \eqno (5.3)$$
 everywhere.} \vskip 0.3cm

$\underline{\mbox{\textbf{Canonical neighborhood assumption (with
accuracy $\varepsilon$)}}}$:  \emph{ Let $g_t$ be a solution to
the Ricci flow with surgery staring with (\ref{3.1}). For the
given $\varepsilon>0$, there exist two constants
$C_1(\varepsilon)$, $C_2(\varepsilon)$ and a non-increasing
positive function $r$ on $[0,+\infty)$ with the following
properties. For every point $(x,t)$ where the scalar curvature
$R(x,t)$ is at least $r^{-2}(t)$, there is an open neighborhood
$B$, $B_t(x,r)\subset B\subset B_t(x,2r)$ with
$0<\sigma<C_1(\varepsilon)R(x,t)^{-\frac{1}{2}}$,
 which falls into
one of the following three categories:}

\emph{(a) $B$ is a strong $\varepsilon$-neck,
 }

 \emph{(b) $B$ is an $\varepsilon$-cap,  }

\emph{(c) at time $t$, $X$ is diffeomorphic to a closed spherical
orbifold $\mathbb{S}^4/\Gamma$ with at most isolated
singularities.\\ Moreover, for (a) and (b), the scalar curvature
in $B$ at time
 $t$ is between $C^{-1}_2R(x,t)$ and $C_2R(x,t)$, and satisfies
 the gradient estimate $$|\nabla R|<\eta R^{\frac{3}{2}}\mbox{ and }|
 \frac{\partial R}{\partial t}|<\eta R^2,$$} \emph{where $\eta$ is a
 universal constant and the definitions of $\varepsilon-$cap and strong $\varepsilon-$neck will be given in the next paragraph.}

 We give the precise definitions  of  $\varepsilon-$cap, and strong $\varepsilon-$neck in the following. First, we say an open set $B$ on an orbifold
is an \textbf{$\varepsilon-$neck}
 if there is a diffeomorphism $\varphi:
\mathbb{I}\times (\mathbb{S}^{3}/\Gamma)\rightarrow B$ such that
 the pulled back metric $(\varphi)^{*}g,$
 scaling  with some factor, is  $\varepsilon$-close (in
$C^{[\varepsilon^{-1}]}$ topology) to the standard metric
$\mathbb{I}\times (\mathbb{S}^{3}/\Gamma)$ with scalar curvature 1
and $\mathbb{I}=(-\varepsilon^{-1}, \varepsilon^{-1}).$ An open
set $B$ is  \textbf{$\varepsilon-$cap} if $B$ is diffeomorphic to
smooth cap $\mathbb{B}^4,$ $C_{\Gamma}^{\sigma},$
 orbifold cap of Type I , II,  $C_{\Gamma}$ or $\mathbb{S}^{4}/(x,\pm x')\backslash \bar{\mathbb{B}}^4,$
 and the region around the end is an
 $\varepsilon-$neck. A \textbf{strong $\varepsilon-$neck} $B$ at $(x,t)$
 is the time slice at time $t$ of the parabolic region
$\{(x',t')|x'\in B, t'\in [t- R(x,t)^{-1},t]\}$ where the solution
is well-defined and has the property that there is a diffeomorphism
$\varphi: \mathbb{I}\times (\mathbb{S}^{3}/\Gamma)\rightarrow B$
such that
 , the pulling back solution $(\varphi)^{*}g(\cdot,\cdot)$
 scaling with factor $R(x,t)$ and shifting the time $t$ to $0$,  is  $\varepsilon$-close(in $C^{[\varepsilon^{-1}]}$
topology) to the
 subset $(\mathbb{I}\times \mathbb{S}^{3}/\Gamma)\times [-1,0]$
 of the evolving round cylinder
$\mathbb{R}\times (\mathbb{S}^3/\Gamma)$, having scalar curvature
one and length $2\varepsilon^{-1}$ to $\mathbb{I}$ at time zero.

In order to take limits for surgerical orbifold solutions, we need
the noncollapsed condition.  Let $\kappa$ be a positive constant. We
say the solution is \textbf{$\kappa-$noncollapsed on the scales less
than $\rho$} if it satisfies the following property: if $$
|Rm(\cdot,\cdot)| \leq r^{-2}$$  on $ P(x_0,t_0,r,-r^2)=\{ (x',t') \
|\ x'\in B_{t'}(x_0,r), t' \in [t_0 -r^2,t_0] \}$ and $r < \rho$,
then we have
$$Vol_{t_0}(B_{t_0}(x_0,r)) \geq \kappa r^4.$$  Since we are
dealing with solutions with surgeries, the parabolic neighborhood
$P(x_0,t_0,r,-r^2)$ is a little bizarre, the condition $ |Rm(x,t)|
\leq r^{-2}$ is imposed on the place where the solution is
defined.

We will inductively construct a long time solution $g(t)$ satisfying
the a priori assumptions. In section \ref{4.1},  we actually have
constructed a solution satisfying a priori assumptions for a period
of time. In order to  extend our solution for a longer time
inductively, we need to do surgery repeatly. In particular, we need
that there exist sufficient fine necks in horns of surgical
solutions and the estimate has to be quantitative. The following
statement is similar to Proposition \ref{p4.2}, but the situation is
a bit different, since we are dealing with solutions with surgery.

\begin{proposition}
Suppose we have a solution to the Ricci flow with surgery on $(0,T)$
satisfying the a priori assumptions in the above, and the solution
becomes singular as $t\rightarrow T.$  For the arbitrarily given
small $0<\delta<<\varepsilon,$ there is an $0<h< \delta \rho(T)
=\delta^2r(T)$ depending only on $\delta,$ $\varepsilon,$ and $r(T)$
such that if at the time $T$, a point $x$ on a $\varepsilon-$horn
$H$ whose finite end is in $\Omega_{\rho(T)}$ has curvature $\geq
h^{-2},$ then there is a $\delta-$neck around it.
\end{proposition}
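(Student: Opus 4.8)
The plan is to argue by contradiction along the lines of Proposition~\ref{p4.2}, but to carry every estimate on the \emph{smooth} cover of the horn so that nothing depends on the non-collapsing parameter $\kappa$ (which degrades with each surgery), and to interpose an argument guaranteeing that the blow-up limits used below come from surgery-free space-time regions. If the proposition fails, then for $h=1/j\to 0$ there are points $x_j$ lying on $\varepsilon$-horns $H_j\subset X$ at time $T$, with finite end in $\Omega_{\rho(T)}$, such that $\bar R(x_j)\geq j^2\to\infty$ but no $x_j$ has a $\delta$-neck neighborhood. Using Hamilton's canonical parametrization (section C of \cite{Ha97}) we realize each $H_j$ as the image of a diffeomorphism $\Phi_{\Gamma_j}\colon(\mathbb{S}^3/\Gamma_j)\times(A_j,B_j)\to H_j$ with $\Gamma_j$ a fixed point free finite subgroup of $ISO(\mathbb{S}^3)$, and pass to the natural $\Gamma_j$-cover $\Phi_j\colon\mathbb{S}^3\times(A_j,B_j)\to H_j$. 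The pulled-back metric at time $T$ is smooth (no orbifold singularities), still obeys the pinching (\ref{pinch}), and --- being the cover of a long chain of $\varepsilon$-necks --- is $\kappa_0$-noncollapsed on the relevant scales for a \emph{universal} $\kappa_0$, exactly as in Case~2 of Proposition~\ref{elliptic}. We rescale this metric about a lift $\tilde x_j$ of $x_j$ by the factor $\bar R(x_j)$ and shift $T$ to $0$.

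There are then three steps. \emph{(1) Bounded curvature at bounded distance}: one shows the rescaled curvature is bounded on every fixed metric ball about $\tilde x_j$, which is the argument of step~2 of the proof of Theorem~4.1 in \cite{CZ05F} (also used in Proposition~\ref{p4.2}); otherwise a further point-selection and blow-up yields in the limit a non-flat nonnegatively curved metric cone (the limit curvature operator being nonnegative because $\bar R(x_j)\to\infty$ turns the pinching into (\ref{restrictive})), contradicting Hamilton's strong maximum principle \cite{Ha86}. This is where $h<\delta\rho(T)=\delta^2 r(T)$ enters: since all surgeries are performed at a fixed positive scale $h_0$ (Lemma~\ref{jup}), after rescaling by the huge factor $\bar R(x_j)$ the bounded-size space-time regions entering the point-selection lie, for $j$ large, entirely in the smooth part of the flow, so the cone argument is unaffected by surgery. \emph{(2) Splitting}: the curvature bound forces the ends $A_j,B_j$ to recede to infinity in the rescaled distance; combining the universal noncollapsing, the curvature bound and the pinching we extract a subsequential $C^\infty$ Cheeger--Gromov limit, a complete manifold with nonnegative curvature operator containing a line, hence by the Toponogov splitting theorem splitting as $\mathbb{R}\times\Sigma^3$ with $\Sigma^3$ closed, and (\ref{restrictive}) forces $\Sigma^3$ to be the round $\mathbb{S}^3$, so the limit is the standard cylinder $\mathbb{R}\times\mathbb{S}^3$ of scalar curvature one. \emph{(3) Descent}: for $j$ large the rescaled metric near $\tilde x_j$ is $\delta$-close to the standard neck, and since the construction is $\Gamma_j$-equivariant it descends to $H_j$ and gives a $\delta$-neck around $x_j$, contradicting the choice of $x_j$. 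As only $\delta$, $\varepsilon$, $r(T)$ and the universal constants $\kappa_0,\Lambda,P$ enter, the threshold $h$ has the asserted form.

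The main obstacle, and the essential difference from Proposition~\ref{p4.2}, is the bookkeeping forced by surgeries: unlike the ancient-solution and first-singular-time cases, a backward parabolic neighborhood of $(x_j,T)$ need not be embedded in a smooth Ricci flow, so one must check that the blow-up limits in Step~(1) genuinely arise from surgery-free regions. This is arranged by taking $h$ small relative to the surgery scale $h_0$ and to $r(T)$, together with the fact that a high-curvature point on an $\varepsilon$-horn that persists to time $T$ admits a backward neighborhood of size comparable to its own curvature scale on which no surgery intervenes, so that after rescaling such neighborhoods become arbitrarily large. A secondary point, already present in Proposition~\ref{p4.2} and resolved there in the same way, is to run the whole argument on $\mathbb{S}^3\times(A_j,B_j)$ rather than on $X$, so that the noncollapsing used is the universal $\kappa_0$ of the smooth neck cover and not the a priori $\kappa$ of the surgical solution.
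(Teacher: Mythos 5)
Your overall architecture matches the paper's: argue by contradiction, use Hamilton's canonical parametrization to pass to the $\Gamma_j$-cover $\mathbb{S}^3\times(A_j,B_j)$ so that the universal noncollapsing from Proposition~\ref{elliptic} is available, rescale, extract a cylindrical limit, and descend by $\Gamma_j$-equivariance. That is exactly the skeleton of the paper's proof, which explicitly says ``by the same argument of Proposition~\ref{p4.2}.''

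However, the part you flag as ``the main obstacle'' --- ensuring the blow-up regions are surgery-free --- is handled incorrectly. You assert that ``all surgeries are performed at a fixed positive scale $h_0$'' and that consequently ``after rescaling by the huge factor $\bar R(x_j)$ the bounded-size space-time regions \ldots lie, for $j$ large, entirely in the smooth part of the flow.'' Neither claim survives scrutiny. Lemma~\ref{jup} gives an upper bound $h\leq h_0$ on the surgery radius, not a fixed scale, and the surgery radius $h(t)$ shrinks as $t\to T$ under the $\delta(t)$-cutoff scheme; more importantly, the existence of a lower bound on the surgery scale does not by itself prevent a surgery that occurred at a time very close to $T$, spatially near the horn, from intruding into the blow-up parabolic region. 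You also write that the backward surgery-free neighborhoods ``become arbitrarily large'' after rescaling, but the strong $\varepsilon$-neck structure gives a backward time interval of exactly $R(x_j)^{-1}$, which after scaling by $R(x_j)$ is \emph{precisely one unit}, not arbitrarily large. The paper's mechanism is different and sharper: it first observes that the canonical neighborhoods on the horn are strong $\varepsilon$-necks (hence a unit backward interval is available for free), obtains a limit on $(-1,0]$, and then \emph{bootstraps}: at the rescaled time $-1+\tfrac{1}{100}$ the scalar curvature has dropped to $\leq 1$ and those points again carry strong $\varepsilon$-neck neighborhoods, so a further subsequence yields a limit defined on $[-2,0]$, and so on as far as needed. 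This iterated extension is the step that actually makes the surgical setting tractable, and it is missing from your argument; without it you have not justified that the parabolic blow-up regions used in Step~(1) can be taken as long as the cone and point-selection argument requires.

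Aside from this, a smaller inaccuracy: you parse the hypothesis as ``$h$ small relative to the surgery scale $h_0$,'' but $h$ \emph{is} the surgery scale being constructed, and the relevant smallness is $h<\delta\rho(T)=\delta^2 r(T)$, which compares $h$ to the canonical-neighborhood threshold $r(T)$ so that every point of curvature $\geq h^{-2}$ falls under the canonical neighborhood assumption (and hence under the strong-neck dichotomy), not to any lower bound on surgery radii.
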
\label{4.3}
\begin{proof} We observe that the canonical neighborhoods of the points in the
$\varepsilon-$ horn $H$ (far from the end) are all strong
$\varepsilon-$ necks. The the solution around any point $\bar{x}$ on
$H$ with ${R}(\bar{x},T)\geq h^{-2}$ has existed for a previous time
interval $(T-{R}(\bar{x},T)^{-1},T)$.  Suppose the proposition is
not true.   We use Hamilton's parametrization
$\Phi:\mathbb{S}^3\times (A,B)\rightarrow H $ to pull back the
solution on $\mathbb{S}^3\times (A,B).$ By the same argument of
Proposition \ref{p4.2}, we extract a convergent subsequence from the
parabolic scalings around suitable points $\bar{x}$ with
${R}(\bar{x},T)\geq h^{-2}\rightarrow \infty.$ The limit solution is
just the standard solution on $\mathbb{S}^3\times \mathbb{R}$ which
exists at least on the time interval $(-1,0]$ after shifting the
origin. Moreover, the solution on all points (on the original space)
at normalized time $-1+\frac{1}{100}$ still has strong
$\varepsilon-$neck neighborhoods and the scalar curvature is $\leq
1$ as $h^{-1}\rightarrow \infty.$ So we can actually extract a
subsequence so that the limit solution is defined at least on
$[-2,0].$ Since the solution is $\Gamma-$invariant, this gives a
$\delta-$neck as $h^{-1}$ is very large. This is  a contradiction.
\end{proof}
Now we justify the uniform $\kappa-$noncollapsing under the
assumption of canonical neighborhoods with accuracy $\varepsilon$
for some parameter $\tilde{r}$ which may be very small.  The key
point is that even if we perform $\delta-$cutoff surgeries with
sufficient fine $\delta$ which depends on $\tilde{r},$ the
noncollapsing constant $\kappa$ we obtained is uniform and
independent of $\tilde{r}$. In Lemma 5.5 in \cite{CZ05F}, the same
estimate was deduced when the space is smooth. The fact that the
canonical neighborhoods in \cite{CZ05F} are not collapsed played a
crucial role in the proof there. In the current context, at a
priori, the canonical neighborhoods may be sufficiently collapsed.
We need a different argument. Our idea is the following. When the
scale is not too small comparing with the canonical neighborhood
parameter $\tilde{r},$ we observe that the surgery is performed
far away and the argument of Perelman's Jacobian comparison
theorem can be modified to apply as in the smooth case
\cite{CZ05F}. When the scale is small, we first show the space has
a canonical geometric neck near the point and then extend the
canonical geometric neck to form a long geometric tube so that the
other end of the tube has a neck of big scale. After showing the
neck with big scale is noncollapsing, we will get a control on the
order of the fundamental group of the neck which in turn gives the
control on the noncollapsing of the original neck with small
scale.

\begin{lemma} \label{kappan}Given a compact four-orbifold with positive isotropic curvature and given
small $\varepsilon>0$ and a positive integer $l$. Suppose we have
constructed the sequences $\delta_0>0,$ $\tilde{\delta_j}>0,$
$r_j>0,$ $\kappa_j>0,$ $ 0\leq j\leq l-1,$  such that any solution
to the Ricci flow with
 surgery on $[0,T),$ with $T\in
[l\varepsilon^2,(l+1)\varepsilon^2)$ and with the four-orbifold as
the initial data, obtained by $\delta(t)$-cutoff surgeries with
$\delta(t)\leq \delta_0,$ satisfies the following three properties:

(i)  the pinching assumption holds on $[0,T),$

(ii) if $\delta(t)\leq \tilde{\delta}_j$ on
$[j\varepsilon^2,(j+1)\varepsilon^2],$ for all $0\leq j\leq l-1,$
then the canonical neighborhood assumption (with accuracy
$\varepsilon$) holds with parameter $r_j>0$ on each
$[j\varepsilon^2,(j+1)\varepsilon^2]$ for all $0\leq j\leq l-1;$

(iii) if $\delta(t)\leq \tilde{\delta}_j$ on
$[j\varepsilon^2,(j+1)\varepsilon^2],$ for all $0\leq j\leq l-1,$
then it is $\kappa_{j}>0$ noncollapsed  on
$[j\varepsilon^2,(j+1)\varepsilon^2]$ for all scales less than
$\varepsilon,$ for all $0\leq j\leq l-1.$

Then there exists a
$\kappa_{l}=\kappa_{l}(\kappa_{l-1},r_{l-1},\varepsilon)>0$ and for
any $\tilde{r}>0,$ there exists
$\tilde{\delta}_{l}=\tilde{\delta}_{l} (\kappa_{l-1},
\tilde{r},\varepsilon)>0$ such that any solution to Ricci flow with
$\delta(t)-$cutoff surgeries on $[0,T')$ for some $T'\in
[l\varepsilon^2,(l+1)\varepsilon^2]$ is $\kappa_{l}$-noncollapsed on
$[(l-1)\varepsilon^2,T')$ for all scales less than $\varepsilon,$ if

(a) it satisfies the canonical neighborhood assumption (with
accuracy $\varepsilon$) with parameter $\tilde{r}$ on
$[l\varepsilon^2, T');$

(b) for each $t\in [l\varepsilon^2,T'),$ on each connected
components of the solution, there is a point $x$ on it such that
$R(x,t)\leq \tilde{r}^{-2};$

(c) $\delta(t)\leq \tilde{\delta}_{j} $ on
$[j\varepsilon^2,(j+1)\varepsilon^2],$ for $0 \leq j\leq l-1,$ and
$\delta(t)\leq \tilde{\delta}_{l} $ on $[(l-1)\varepsilon^2,T').$

\end{lemma}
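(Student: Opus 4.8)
The plan is to run Perelman's reduced-volume monotonicity as in Lemma~5.5 of \cite{CZ05F}, but with two additional inputs: one to neutralize the $\delta$-cutoff surgeries performed on $[l\varepsilon^2,T')$, and one --- the genuinely new ingredient --- to deal with the fact that a canonical neighborhood may now be badly collapsed, because a cutoff surgery can glue in an orbifold cap $C_{\Gamma}$ whose local uniformization group $\Gamma$ has arbitrarily large order. First I would dispose of the time interval $[(l-1)\varepsilon^2,l\varepsilon^2]$: there the claim is exactly hypothesis (iii) with $j=l-1$ (its hypotheses are guaranteed by condition (c)), so it suffices to take $\kappa_l\le\kappa_{l-1}$ and henceforth assume the base point $(x_0,t_0)$ has $t_0\in[l\varepsilon^2,T')$. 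I would fix once and for all a threshold scale $\bar r=\bar r(\kappa_{l-1},r_{l-1},\varepsilon)>0$ separating the two regimes below, and choose $\tilde\delta_l$ so small that the surgery radii $h$ on $[l\varepsilon^2,T')$ satisfy $h\ll\bar r$ and $h\ll\tilde r$, and moreover --- exactly as in \cite{CZ05F} --- so small that no backward $\mathcal L$-geodesic of controlled $\mathcal L$-length can reach a surgery cap (such a cap carries curvature of order $h^{-2}$ over a definite parabolic region, so any geodesic entering it acquires enormous $\mathcal L$-length and is irrelevant to the reduced volume).

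\textbf{Regime 1: $r\ge\bar r$.} Since $|Rm|\le r^{-2}$ on $P(x_0,t_0,r,-r^2)$ and $h\ll\bar r\le r$, the ball $B_{t_0}(x_0,r)$ contains no high-curvature cap, hence no orbifold point with large local group. I would run backward $\mathcal L$-geodesics from $(x_0,t_0)$ down to time $(l-1)\varepsilon^2$ --- a time span at most $2\varepsilon^2$, which is a \emph{fixed} number independent of $r$ --- use the surgery-avoidance of the previous paragraph, and where a minimizing geodesic meets one of the finitely many bounded-order orbifold points pass to the local uniformization cover, so that Perelman's Jacobian comparison and the reduced-volume monotonicity go through. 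A hypothetical collapsing $Vol_{t_0}(B_{t_0}(x_0,r))<\kappa r^4$ would force the reduced volume at backward-time $r^2$, hence by monotonicity at backward-time $t_0-(l-1)\varepsilon^2$, to be tiny; but at time $(l-1)\varepsilon^2$ the $\kappa_{l-1}$-noncollapsing together with the canonical neighborhood assumption with parameter $r_{l-1}$ forces it to be bounded below, a contradiction once $\kappa$ is small. This yields $Vol_{t_0}(B_{t_0}(x_0,r))\ge\kappa_l r^4$ with $\kappa_l=\kappa_l(\kappa_{l-1},r_{l-1},\varepsilon)$.

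\textbf{Regime 2: $r<\bar r$.} Suppose $Vol_{t_0}(B_{t_0}(x_0,r))<\kappa r^4$. Since by Regime~1 the flow is $\kappa_l$-noncollapsed at time $t_0$ at every scale $\ge\bar r$, the collapsing of this small ball forces the curvature to blow up near $x_0$: there is a point within controlled normalized distance whose canonical neighborhood is a strong $\varepsilon$-neck $N$ with cross section $\mathbb S^3/\Gamma$, and $N$ is itself collapsed, so $|\Gamma|$ is large. Using the canonical neighborhood assumption and the gradient estimates $|\nabla R|<\eta R^{3/2}$, $|\partial R/\partial t|<\eta R^2$, I would splice the $\varepsilon$-necks adjacent to $N$ into a long geometric tube with topologically constant cross section $\mathbb S^3/\Gamma$ and extend it in the direction of decreasing curvature; by property (b) this tube cannot stay at canonical scale throughout its connected component, so following it one reaches either an $\varepsilon$-cap, at whose mouth sits a neck, or a point of scalar curvature $\sim\tilde r^{-2}$ bordering the part where $R<\tilde r^{-2}$ --- and from that part, which is smooth with $|Rm|<\tilde r^{-2}$ and, again by (b), contains a point of curvature $\sim\tilde r^{-2}$, one continues the tube out to a geometric neck $N'$ of scale comparable to $\bar r$. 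By Regime~1, $N'$ is $\kappa_l$-noncollapsed; but a metric ball whose radius equals the scale of a neck with cross section $\mathbb S^3/\Gamma$ has volume at most $C\,(\mathrm{scale})^4/|\Gamma|$, so $|\Gamma|\le N_0:=N_0(\kappa_l)$. Transporting this bound back along the tube, the original ball $B_{t_0}(x_0,r)$ --- which lies in a neck with the same cross section $\mathbb S^3/\Gamma$ and of scale $\gtrsim r$ --- has $Vol_{t_0}(B_{t_0}(x_0,r))\ge c\,r^4/|\Gamma|\ge (c/N_0)\,r^4$, contradicting the collapsing once $\kappa<c/N_0$. Hence the final noncollapsing constant may be taken to be $\min\{\kappa_{l-1},\kappa_l,c/N_0\}$, still a function of $\kappa_{l-1},r_{l-1},\varepsilon$ alone.

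I expect the crux to be the tube-extension in Regime~2: one must show, with constants that do \emph{not} depend on the a priori canonical neighborhood parameter $\tilde r$, that from a possibly microscopic collapsed neck one can always travel along a bona fide geometric tube to a neck of definite scale $\sim\bar r$. This means controlling how the neck/cap picture propagates across the surgery caps created on $[l\varepsilon^2,T')$ and exploiting property (b) to exclude the possibility that the entire tube, and the whole component, remain at tiny scale. The surgery-avoidance estimate for $\mathcal L$-geodesics in Regime~1 is, by contrast, a routine transcription of \cite{CZ05F}; the only other point needing care there is that minimizing $\mathcal L$-geodesics do not run through orbifold singular points, which once the relevant local groups are bounded (as they are here) is settled by passing to local uniformization covers.
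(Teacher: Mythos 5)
Your high‐level two–regime strategy agrees with the paper's Step~1/Step~2 decomposition, and you correctly identify the essential new idea in the small‐scale regime: propagate from a collapsed neck along a tube of $\varepsilon$-necks until a neck of controlled scale is reached, use large‐scale noncollapsing there to bound $|\Gamma|$, and transport the bound back. However, two points in your sketch do not match what the paper actually does, and the first of them is a genuine gap.

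First, the threshold separating the regimes is wrong. You fix $\bar r=\bar r(\kappa_{l-1},r_{l-1},\varepsilon)$ once and for all, whereas the paper splits at $r_0=\tilde r/C(\varepsilon)$. The point of the paper's choice is that in Step~2, after enlarging $r_0$ so that $\max_{P}R=r_0^{-2}$, the inequality $r_0\le\tilde r/C(\varepsilon)$ guarantees $R\ge\tilde r^{-2}$ everywhere on $B_{t_0}(x_0,r_0)$, so the canonical neighborhood assumption actually applies there and the neck/tube picture is available. With your fixed threshold, consider $\tilde r\ll\bar r$ and a scale $r$ with $\tilde r/C(\varepsilon)<r<\bar r$: such a scale lies in your Regime~2, but the curvature bound $R\le r^{-2}<C(\varepsilon)^2\tilde r^{-2}$ does not force $R\ge\tilde r^{-2}$ on the ball, so the canonical neighborhood assumption gives you nothing and there is no neck to extend. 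Your Regime~1 does not cover this range either, since $r<\bar r$. The paper's reduced-volume argument in Step~1 in fact works all the way down to $r_0\sim\tilde r/C(\varepsilon)$ (the surgery caps have curvature $\gtrsim\delta^{-2}\tilde r^{-2}\gg r_0^{-2}$, so they are still far), which is why that is the correct cutoff.

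Second, you describe the $\mathcal L$-geodesic surgery-avoidance in Regime~1 as ``a routine transcription of \cite{CZ05F}.'' The paper explicitly flags this as the one place in Step~1 that is \emph{not} routine. In the manifold case, the crucial scalar-curvature lower bound along the post-surgery tube (used to show that an $\mathcal L$-geodesic that lingers near a freshly-glued cap pays a large $\int R$ cost) is obtained by blowing up near the surgery and extracting a convergent limit to the standard solution; this extraction uses that the necks are uniformly noncollapsed. In the orbifold setting one does not know this a priori. The paper's fix is to pull the solution back via Hamilton's canonical parametrization $\Phi:\mathbb S^3\times(-L,L)\to H$, perform the standard (smooth, noncollapsed) surgery there, extend $\Phi$ over the surgery cap to a space $Y$, and then invoke the uniqueness theorem \cite{CZ05U} for complete solutions to conclude that, as $\delta\to0$, the pulled-back surgered solutions converge to the standard solution. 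This lifting/uniqueness argument is an essential ingredient of Step~1 that your proposal omits. By contrast, your concern about controlling the tube extension ``across the surgery caps created on $[l\varepsilon^2,T')$'' is not where the paper spends its effort: in Step~2 one simply extends the chain of adjacent $\varepsilon$-necks until one either hits an $\varepsilon$-cap or a point of scalar curvature $\le C(\varepsilon)^2\tilde r^{-2}$, and assumption~(b) guarantees the latter must eventually happen along at least one direction of the tube.
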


\begin{proof}
Suppose $R(\cdot,\cdot)\leq r_0^{-2}$ on $
P(x_0,t_0,r_0,-r_0^2)=\{ (x',t') \ |\ x'\in B_{t'}(x_0,r), t' \in
[t_0 -r^2,t_0] \},$  we will estimate
$vol_{t_0}(B_{t_0}(x_0,r_0))/ r_0^4$ from below.

Step 1: In this step, we deal with the estimates on scales not too
small comparing with $\tilde{r}.$ We assume $r_0\geq
\frac{\tilde{r}}{C(\varepsilon)},$ where $C(\varepsilon)$ is some
fixed constant (to be determined later) depending only on
$\varepsilon.$ In this case, we adapt the proof of Lemma 5.5 in
\cite{CZ05F} as follows.

Since the surgeries occur in place where the curvature is bigger
than $\delta^{-2}\tilde{r}^{-2},$ which is much larger than
$\tilde{r}^{-2},$ we first modify the argument of Lemma 5.5 in
\cite{CZ05F} to show any $\mathcal{L}$ geodesic $\gamma(\tau), \tau
\in[0,\bar{\tau})$ ($\bar{\tau}\leq t_0-(l-1)\varepsilon^2$),
starting from $(x_0,t_0)$ with reduced length $\leq
\varepsilon^{-1}$, stays far away from the place where surgeries
occur. More precisely, we claim that if some $\gamma(\tau_0)$ is not
far from some cap which is glued by surgery procedure at time
$t=t_0-\tau_0,$ then the reduced length of $\gamma$ defined by
$$
\frac{1}{2\sqrt{\bar{\tau}}}\int_{0}^{\bar{\tau}}\sqrt{\tau}(R(\gamma(\tau),\tau)+|\dot{\gamma}(\tau)|^2)d\tau
$$
is $\geq 25\varepsilon^{-1}$.

This estimate for manifold case was established in (5.8) on page 238
of \cite{CZ05F}. Let us recall the proof of this estimate for the
manifold case given in \cite{CZ05F}. Note that the place performed
$\delta-$ cutoff surgery is deeply inside the horn under
normalization and the parabolic region $P(x_0,t_0,r_0,-r_0^2)$ is
far from it by curvature estimates for canonical neighborhoods. Thus
at the time $t=t_0-\tau_0$, the point $\gamma(\tau_0)$ lies deeply
inside a very long tube and the segment $\gamma(\tau), \tau \in
[0,\tau_0]$, tends to escape from the tube. If $\gamma(\tau)$
escapes from the very long tube within short time $\leq
CR(x_1,t_0-\tau_0)^{-1}$ from $\tau_0,$ where $C$ is some universal
constant and $x_1$ is a point in the neck where surgery takes place,
then $ \int_{0}^{\bar{\tau}}|\dot{\gamma}(\tau)|^2d\tau $
contributes a big quantity to the above integral since the tube is
quite long. However if $\gamma(\tau)$ stays a while $\geq
CR(x_1,t_0-\tau_0)^{-1}$ on the long tube, then
$\int_{0}^{\bar{\tau}}R d\tau $ contributes a large quantity to the
above integral,  since for any $1>\zeta>0,$ we have the estimate
$$R(x,t)\geq R(x_1,t_0-\tau_0)
\frac{Const.}{\frac{n-1}{2}-R(x_1,t_0-\tau_0)(t-t_0+\tau_0)}$$ on
$\gamma\mid_{[\tau_0-\frac{n-1}{2}(1-\zeta)R(x_1,t_0), \tau_0]},$
when  $\delta$ is small enough and $\gamma(\tau)$ stays not far from
the cap.

All the above arguments of \cite{CZ05F} still work in our present
orbifold case except the verification of the last statement on the
estimate of the scalar curvature on the tube. In \cite{CZ05F}, the
proof of the above estimate on the scalar curvature on the tube was
given as follows. Recale the solution with factor
$R(x_1,t_0-\tau_0)$ around $(x_0,t_0-\tau_0 )$. Since the necks in
the manifold case of \cite{CZ05F} are not collapsed, we can extract
a convergent limit as $\delta\rightarrow \infty.$ The limit, called
standard solution,  is rotationally symmetric, exists exactly on the
time interval $[0,\frac{n-1}{2})$ and has curvature estimates
$\frac{Const.}{\frac{n-1}{2}-s}$ at time $s.$ But in the current
orbifold case, at a priori, we do not know whether the necks in the
canonical neighborhoods are collapsed or not. Our new argument is to
use Hamilton's canonical parametrization for (the part of) horn:
$\Phi: \mathbb{S}^3\times (-L,L) $ such that the surgery is taken
place on $[0,4),$ and there is finite group $\Gamma$ of global
isometric actions of $\mathbb{S}^3,$ such that $\Phi$ is $\Gamma$
invariant, and $\Phi: \mathbb{S}^3/\Gamma\times (-L,L)$ is
diffeomorphic to its image and each $\mathbb{S}^3/\Gamma$ is mapped
to a constant mean curvature hypersurface.  Moreover the pull back
metric on $\mathbb{S}^3\times (-L,L)$ (after scaling) is very close
to the standard cylinder. We perform a standard surgery on $
\mathbb{S}^3\times (-L,L)$ by cutting open the neck and glue back a
cap, denote the resulting space by $Y$.  Clearly, we can require
 $\Phi$ to
be extended and  defined on $Y$ to the space after surgery, and the
pull back metric is close to the standard capped infinite cylinder.
We pull back the solution also to $Y.$  Note that the gradient
estimate in the canonical neighborhood assumption implies a
curvature bound for the solutions. Then as $\delta\rightarrow 0,$ we
can apply the uniqueness theorem \cite{CZ05U} to show that the
solutions on $Y$ around point near the cap converge to a standard
solution. So the above estimate on the scalar curvature also holds
in our present case.

After proving that any $\mathcal{L}$ geodesic of  reduced length $<
25 \varepsilon^{-1}$ does not touch the surgery region, one can
apply the same argument of Lemma 5.5 in \cite{CZ05F} of using
Perelman's Jacobian comparison to bound
$vol_{t_0}(B_{t_0}(x_0,r_0))/ r_0^4$ from below by constant
depending only on $\varepsilon,\kappa_{l-1},r_{l-1}$ ( see
\cite{CZ05F}, pages 238-241, for the details).

 Step 2: In this step, we deal with the estimates
on scales less than $\frac{\tilde{r}}{C(\varepsilon)}.$ This case is
easier in \cite{CZ05F} because the space has no singularities and
the canonical neighborhoods are not collapsed there. In our present
orbifold case, at a priori, the canonical neighborhoods in our
definitions may be sufficiently collapsed. So we need a new
argument.

Clearly, we may assume $R(x',t')= r_0^{-2}$ for some point on $
P(x_0,t_0,r_0,-r_0^2)=\{ (x',t') \ |\ x'\in B_{t'}(x_0,r), t' \in
[t_0 -r^2,t_0] \},$ otherwise we enlarge $r_0.$ Since $r_0\leq
\frac{\tilde{r}}{C(\varepsilon)},$ by the definition of canonical
neighborhoods, we can choose $C(\varepsilon)$ large enough so that
every point in $B_{t_0}(x_0,r_0)$ has curvature $\geq
\tilde{r}^{-2}$. In particular, the point $x_0$ at the time $t_0$
has a canonical neighborhood, which is a strong $\varepsilon$-neck
or $\varepsilon-$cap. For both cases, the canonical neighborhood
contain an $\varepsilon$-neck $N$ which is close to
$(-\varepsilon^{-1},\varepsilon^{-1})\times (\mathbb{S}^3/\Gamma).$
Clearly, in order to get the $\kappa-$noncollapsing, we only need to
bound the order $|\Gamma|$ of the group $\Gamma$ from above.

Now we consider one of the boundaries $\partial N$ of $ N.$ Since
the curvature is $\geq\tilde{r}^{-2}$ there, there is an
$\varepsilon-$neck or $\varepsilon-$cap adjacent to $N.$ If it is
the $\varepsilon-$ cap adjacent to $N,$ we stop for this end and
consider the other boundary of $N.$ If it is a $\varepsilon-$ neck
 adjacent (denoted by $N'$) to $N,$ and $N'$ contains a point having curvature $\leq C(\varepsilon)^2\tilde{r}^{-2},$
 then we also stop. Otherwise,  $ N\cup N'$ form a longer (topological)
 neck, we consider the boundary of $N'$ and continue the argument.
 We do the same argument for the another boundary of $N.$
 Since there is a point $\bar{x}$ on space such
that $R(\bar{x},t_0)\leq \tilde{r}^{-2}$ by assumption (b), there
must be an extension of one boundary of $N$ such that the final
adjacent neck or cap having a point with curvature $\leq
C(\varepsilon)^2\tilde{r}^{-2}.$ By canonical neighborhood
assumption, the curvature at the final neck or cap are $\leq
C(\varepsilon)^2\tilde{r}^{-2}.$ We conclude that there is a tube
$T$ consisting of $\varepsilon-$ necks such that $T$ contains the
initial neck $N$ and another $\varepsilon-$neck $N_1$ where the
curvatures are $\leq C(\varepsilon)^2 \tilde{r}^{-2}.$ By step 1, we
can bound
\begin{equation} \label{noncollapse}\frac{vol_{t_0}(N_1)}{\varepsilon^3
diam(N_1)^4} \geq \frac{1}{C(\varepsilon,\kappa_{k-1},r_{l-1})}
\end{equation}from below uniformly. By using Hamilton's canonical
parametrization $\Phi: \mathbb{S}^3\times (A,B)$ to parametrize
$T$, $\Gamma$ acts isometrically on the factor $\mathbb{S}^3$ on
the whole $ \mathbb{S}^3\times (A,B).$ This gives
$|\Gamma|vol_{t_0}(N_1) \leq C(\varepsilon) diam(N_1)^4.$ By
combining with (\ref{noncollapse}), we get a uniform upper bound
of $|\Gamma|.$

The proof of the theorem is completed.

\end{proof}

\begin{theorem} Given a compact four-dimensional orbifold $(X,g)$
with positive isotropic curvature and with at most isolated
singularities. Given any fixed small constant $\varepsilon > 0.$ one
can find three non-increasing positive and continuous functions
$\widetilde{\delta}(t)$, $\widetilde{r}(t)$ and
$\widetilde{\kappa}(t)$ defined on whole $[0,+\infty)$ with the
following properties. For arbitrarily given positive continuous
function $\delta(t)\leq \widetilde{\delta}(t)$ on $[0,+\infty)$, the
Ricci flow with $\delta(t)-$cutoff surgery, starting with $g$,
admits a solution satisfying the a priori assumption (with accuracy
$\varepsilon$ with $r=\widetilde{r}(t)$ ) and $\kappa-$
noncollapsing (with $\kappa=\widetilde{\kappa}(t)$) on a maximal
time interval $[0,T)$ with $T<+\infty$ and becoming extinct at $T.$
Moreover, the solution is obtained by performing at most finite
number of $\delta-$cutoff surgeries on $[0,T)$.
\end{theorem}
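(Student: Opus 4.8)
The plan is to follow the inductive construction of Ricci flow with surgery due to Perelman \cite{P2}, in the streamlined form of Chen--Zhu \cite{CZ05F}, replacing throughout ``manifold'' by ``orbifold with at most isolated singularities'' and feeding in the orbifold substitutes assembled above: the singularity structure theorem (Theorem \ref{t4.1}), the canonical neighbourhood theorem for ancient $\kappa$-orbifold solutions (Theorem \ref{t3.10}), the existence of fine $\delta$-necks in the $\varepsilon$-horns of a surgical solution (Proposition \ref{4.3}), the justification of the pinching estimates under $\delta$-cutoff surgery (Lemma \ref{jup}), the compactness theorem for $\kappa$-noncollapsed Ricci flows on such orbifolds \cite{L}, and the surgery-independent noncollapsing estimate (Lemma \ref{kappan}). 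I would proceed by induction on $l\ge 1$ over the intervals $[(l-1)\varepsilon^2,l\varepsilon^2)$: the inductive hypothesis is that $\delta_0>0$ and $\tilde\delta_j,r_j,\kappa_j>0$, $0\le j\le l-1$, have been fixed so that, whenever the surgeries on each $[j\varepsilon^2,(j+1)\varepsilon^2]$ use parameter $\delta(t)\le\tilde\delta_j$, the pinching assumption holds on $[0,l\varepsilon^2)$, the canonical neighbourhood assumption holds with parameter $r_j$ on each $[j\varepsilon^2,(j+1)\varepsilon^2]$, and $\kappa_j$-noncollapsing holds there for scales $<\varepsilon$. The base case $l=1$ is precisely the discussion of Section \ref{4.1}, built on Theorem \ref{t4.1}, Lemma \ref{ka}, Proposition \ref{p4.2} and Lemma \ref{jup}.

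For the inductive step I would, at stage $l$, choose the new parameters in the following order. First, Lemma \ref{kappan} yields a noncollapsing constant $\kappa_l=\kappa_l(\kappa_{l-1},r_{l-1},\varepsilon)$ that does \emph{not} depend on the canonical neighbourhood parameter to be used on the new interval; this is what breaks the apparent circularity between noncollapsing and canonical neighbourhoods. Next, I would establish, by a Perelman-type contradiction argument, a canonical neighbourhood parameter $r_l>0$ valid on $[l\varepsilon^2,(l+1)\varepsilon^2)$: if no such $r_l$ worked, one would obtain surgical solutions with surgery parameters $\delta_k\to 0$ and points $(x_k,t_k)$, $t_k\in[l\varepsilon^2,(l+1)\varepsilon^2)$, with $R(x_k,t_k)\to\infty$ having no canonical neighbourhood; a point-selection step supplies canonical neighbourhoods at all nearby points of comparable or smaller curvature, so Lemma \ref{kappan} makes the parabolic rescalings uniformly $\kappa_l$-noncollapsed; the pinching assumption forces the rescaled curvatures to satisfy the restricted isotropic pinching (\ref{ricp}) and to be nonnegative in the limit; since $\delta_k\to 0$ the surgery regions recede to spatial infinity and backward in time, so together with a curvature bound of the type in Proposition \ref{elliptic} and the orbifold compactness theorem \cite{L} one extracts a limit which is an ancient $\kappa$-orbifold solution with at most isolated singularities, and Theorem \ref{t3.10} then produces a canonical neighbourhood of the limit point, contradicting the choice of $(x_k,t_k)$ for $k$ large. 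Finally I would shrink $\tilde\delta_l$ once more --- taking $\tilde r=r_l$ in Lemma \ref{kappan}, and using Lemma \ref{jup} and Proposition \ref{4.3} --- so that for every $\delta(t)\le\tilde\delta_l$ all three a priori conditions propagate onto $[l\varepsilon^2,(l+1)\varepsilon^2)$. Assembling the step data $r_l,\kappa_l,\tilde\delta_l$ into continuous nonincreasing functions $\tilde r(t),\tilde\kappa(t),\tilde\delta(t)$ is then routine.

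With the parameters in hand the flow is run inside stage $l$: starting from the slice at $(l-1)\varepsilon^2$ the smooth flow --- since $\partial_t R=\Delta R+2|Ric|^2\ge \Delta R+\tfrac12 R^2$ on a four-orbifold and $R>0$ initially --- must develop a scalar curvature singularity at some finite time; there Theorem \ref{t4.1} identifies the geometry, one selects an $\varepsilon$-horn with finite end in $\Omega_\rho$, finds a $\delta(t)$-fine neck in it by Proposition \ref{4.3}, performs the $\delta(t)$-cutoff surgery described after Lemma \ref{jup} (gluing in orbifold caps $C_\Gamma$ or round caps $\mathbb{B}^4$), discards every resulting component that is a spherical orbifold $\mathbb{S}^4/\Gamma$ or a union of $\varepsilon$-necks and $\varepsilon$-caps (the cases enumerated in the $\Omega_\rho=\emptyset$ discussion of Section \ref{4.1}), and then restarts. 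Lemma \ref{jup} keeps the pinching valid through each surgery. Since the surgery scale $h(t)$ is bounded below on $[0,l\varepsilon^2]$, every surgery removes a definite amount of volume, while $\mathrm{Vol}_{g(t)}(X)$ is finite and nonincreasing (strictly decreasing along the smooth flow because $R>0$, and jumping down at surgeries); hence only finitely many surgeries occur on any finite time interval, which closes the induction.

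It remains to see that $T<+\infty$ and that the solution is extinct at $T$. Each surgery is carried out where the curvature is at least $h^{-2}$ with $h<\delta\rho$, hence far above $\rho^{-2}$ and a fortiori above the value $R_{\min}(t^-):=\min_X R(\cdot,t^-)$ attained just before the surgery (which is $\le\rho^{-2}$ since $\Omega_\rho\neq\emptyset$); as the retained part $\Omega_\rho$ and the glued-in caps then all have $R\ge R_{\min}(t^-)$, the function $t\mapsto\min_X R(\cdot,t)$ never decreases, not even across a surgery, and between surgeries it satisfies $\tfrac{d}{dt}R_{\min}\ge\tfrac12 R_{\min}^2$ in the barrier sense. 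Comparison with the ODE solution starting at $R_{\min}(0)$ gives $R_{\min}(t)\ge R_{\min}(0)/(1-R_{\min}(0)t/2)$ as long as the orbifold is nonempty, which is impossible beyond $t=2/R_{\min}(0)$; hence the construction terminates at some finite $T\le 2/R_{\min}(0)$ with the solution extinct, after finitely many surgeries. I expect the principal obstacle to be the orbifold collapsing phenomenon repeatedly flagged in Section \ref{4}: a priori the necks $\mathbb{S}^3/\Gamma$ inside canonical neighbourhoods may be severely collapsed (large $|\Gamma|$), so neither the noncollapsing bound nor the compactness step can be taken over verbatim from \cite{CZ05F}; overcoming this is exactly the role of Lemma \ref{kappan} (Hamilton's canonical parametrization, the tube-extension trick, and the uniqueness theorem \cite{CZ05U}), and keeping the interlocking choices of $r_l$, $\kappa_l$ and $\tilde\delta_l$ mutually consistent down the induction is the delicate bookkeeping point.
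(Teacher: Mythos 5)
Your proposal reconstructs exactly the paper's argument, expanded to full detail: the paper's proof is the same inductive scheme over intervals $[(l-1)\varepsilon^2,l\varepsilon^2)$ using Lemma~\ref{jup} for pinching, a Perelman contradiction argument mimicking Proposition~5.4 of \cite{CZ05F} (with Theorem~\ref{t4.1}, Theorem~\ref{t3.10}, Proposition~\ref{4.3}, the orbifold compactness theorem \cite{L}, and Lemma~\ref{kappan} supplying the needed noncollapsing independent of $\tilde r$) for the canonical neighbourhood parameter, the $R_{\min}$ ODE/maximum-principle comparison (with the observation that surgery never affects the minimum locus of $R$) for finite extinction, and the noncollapsing-plus-volume bound for finiteness of surgeries. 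Your identification of Lemma~\ref{kappan} as the crux that breaks the circularity caused by possible collapsing of necks $\mathbb{S}^3/\Gamma$ with large $|\Gamma|$ is precisely what the paper emphasizes, and your presentation is essentially a fuller exposition of the paper's compressed proof.
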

\begin{proof}
The pinching assumption is justified in Lemma \ref{jup}. To justify
the canonical neighborhood assumption, we can apply the same
argument as in manifold case, because we have all ingredients we
need to mimic the proof of Proposition 5.4 in \cite{CZ05F}.  We note
the surgery does not occur on the place where the scalar curvature
achieves its minimum. Then by applying the maximum principle to the
scalar curvature equation $(\frac{\partial}{\partial
t}-\triangle)R=2|Ric|^2$, we conclude that the surgical solution
must be extinct in finit time. To prove the finiteness of the number
of surgeries, we need to check the $\kappa-$ noncollapsing for the
solution. In fact, the $\kappa-$noncollapsing follows from Lemma
\ref{kappan}. Therefore, the proof of the theorem is completed.

\end{proof}

\subsection{Recovering the topology}
\begin{proof} of Theorem \ref{t2}.

Consider a surgical solution, obtained by the previous theorem, to
the Ricci flow with surgery on a maximal time interval $[0,T)$ with
$T<+\infty$. Now we can recover the topology of the initial orbifold
as follows.

Suppose our surgeries times are $0<t_1<t_2<\cdots, t_k< T.$   For a
surgery time ${t_p}^{+},$ after surgeries, denote by $M_1^p, M_2^p,
\cdots M_{i_p}^{p}$ the all connected components either containing
no points of $\Omega_{\rho(t)}$ or having positive curvature
operator. The rest connected components are denoted by
$N^{p}_{1},\cdots, N_{i_p'}^{p}.$ Recall that our construction for
the surgical solution is to stop the Ricci flow on those $M^p_{l}$
for $l=1,\cdots, i_p$ and to continue the Ricci flow on $N^{p}_{l}$
for $ l=1,\cdots, i_p'.$ Note that all connected components at time
$T^{-}$ either contain no points of $\Omega_{\rho(t)}$ or have
positive curvature operator. We denote them by $M_1^{k+1},
M_2^{k+1}, \cdots, M_{i_{k+1}}^{k+1},$ they are actually
$N^{k}_{1},\cdots, N_{i_k'}^{k}.$ We collect all these $M^{i}_j$'s
in a set $\mathcal{S}=\{M_1^1,\cdots, M_{i_{k+1}}^{k+1}\}.$ For each
$M_{j}^{i},$ we will mark a finite number of points $P^{i}_{j,l},$
$l=1,2,\cdots i^{j},$ in the following inductive way.

At first surgery time $t_1,$ we perform a cut-off surgery along a
$\delta-$horn $H$, i.e. we cut open $\delta-$horn along a neck $N$
and glue back a cap or orbifold cap  to the finite part of the horn
connected to $\Omega_{\rho}.$  Remember we also glue back a cap or
orbifold cap
 to the infinite part of the horn (so called
horn-shape end). We  denote the tips of these two caps by  $P$ and
$\bar{P},$ denote these two caps by $C^{P}$ and $C^{\bar{P}}$
respectively.  Through the neck $N$ we cut open, the surgery
procedure establishes a diffeomorphim $\varphi_{P\bar{P}}'$ from
$\partial C^{P}$ to $\partial C^{\bar{P}}.$ Let $S_P$ and
$S_{\bar{P}}$ be the unit tangent spheres at $P$ and $\bar{P},$
then $\varphi_{P\bar{P}}'$ induces an isotopic class
$\varphi_{P\bar{P}}$ of diffeomorphism from $S_P$ to
$S_{\bar{P}}.$  We assign the pairs $(P,S_P)$ and
$(\bar{P},S_{\bar{P}})$ to the manifolds or orbifolds where they
are located.  Inductively, at surgery time $t_p^{-},$ for each
$N^{p-1}$ with some points already been marked by the previous
steps,  we leave these marked points alone, and add new points
produced by performing surgeries at $t_p$ on $N^{p-1}.$ Note that
the previous marked points may be separated to lie in different
connected components after surgeries. Once a component $M_j^{i}$
is terminated at a surgery time $t_j,$ then there is no more
points assigned to it in any later surgery times. We collect all
these marked  points $(P,S_P),$ $(\bar{P},S_{\bar{P}})$ and
isotopic classes $\varphi_{P\bar{P}}$ together.

Now we investigate the topology of each $M^{i}_j\in \mathcal{S}.$
We know at time $t_i^{+},$  $M^{i}_j$ is either diffeomorphic to a
spherical orbifold $\mathbb{S}^4/\Gamma$  with at most isolated
singularities, or it is  covered by $\varepsilon-$ necks and
$\varepsilon-$ caps. Now we consider the latter case.

If $M^i_j$ contains no caps, then $M^{i}_j$ is diffeomorphic
 to smooth manifold $\mathbb{S}^3/ \Gamma\times \mathbb{S}^1$ or $\mathbb{S}^3/ \Gamma
 {\times}_{f}
 \mathbb{S}^1.$

 If $M^{i}_j$ contains  caps, then $M^{i}_j$ is diffeomorphic to  either  smooth
 manifold $\mathbb{S}^4,$ $\mathbb{R}\mathbb{P}^4,$
 $C_{\Gamma}^{\sigma}\cup_f
  C_{\Gamma'}^{\sigma'},$ or one of the orbifolds
  $C_{\Gamma}^{\sigma}\cup_f
  C_{\Gamma'},$ $C_{\Gamma}\cup_f C_{\Gamma'},$ $\mathbb{S}^4/(x,\pm
  x'),$ $\mathbb{S}^4/(x,\pm x')\#\mathbb{R}\mathbb{P}^4,$ $\mathbb{S}^4/(x,\pm x')\# \mathbb{S}^4/(x,\pm x').$

  So we conclude that  each $M^{i}_j$ is diffeomorphic to a
  connected sum of at most two spherical
  orbifolds
  $\mathbb{S}^4/\Gamma_{j,1}^i$ and $\mathbb{S}^4/\Gamma_{j,2}^i.$
   For each $(P,S_P),$ $(\bar{P},S_{\bar{P}})$ and
 $\varphi_{P\bar{P}},$  we know reversing the
surgery procedure is to do connected sum by removing the pair of
points $P,$ $\bar{P}$ and using $\varphi_{P\bar{P}}$ to identify
the boundaries.
   Therefore, our original orbifold is
diffeomorphic to the connected sum of spherical orbifolds
$\mathbb{S}^4/\Gamma$ with at most isolated singularities. This
completes the proof of Theorem \ref{t2}.
\end{proof}

\section{Proof of Main Theorem} \label{5}

 The main purpose
of this section is to deduce the Main Theorem  from Theorem
\ref{t2}.   We need several lemmas on the group actions on the
sphere $\mathbb{S}^n.$

\begin{lemma}\label{lemma5.1}Let $G\subset SO(2n+1)$($n\geq 2$) be a finite
subgroup such that each nontrivial element in $G$ has exactly one
eigenvalue equal to $1.$  Then there is a common nonzero vector
$0\neq v\in \mathbb{R}^{2n+1}$ such that  for all $g\in G$ we have
$g(v)=v.$
\end{lemma}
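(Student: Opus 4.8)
The plan is to prove, by induction on $|G|$, that the fixed subspace $V^G=\{v\in\mathbb{R}^{2n+1}:g(v)=v\text{ for all }g\in G\}$ is nonzero; any nonzero $v\in V^G$ then works. Since $2n+1$ is odd and $\det g=1$, every $g\in G$ has $1$ as an eigenvalue, so by hypothesis $\mathrm{Fix}(g):=\ker(g-\mathrm{id})$ is a line $\ell_g$ for each $g\neq 1$; in particular $\dim V^H\le 1$ for every nontrivial subgroup $H\le G$. If $G$ is cyclic, generated by $g_0$, then the line $\ell_{g_0}$ is fixed by all of $G$ and we are done, so assume henceforth $G$ is non-cyclic, and suppose for contradiction $V^G=0$.

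The crucial preliminary observation (Claim A) is that $G$ contains no copy of $(\mathbb{Z}/p)^2$. If $p=2$, write it as $\{1,a,b,ab\}$ and split $\mathbb{R}^{2n+1}$ into the common $\pm$-eigenspaces $V_{\varepsilon_1\varepsilon_2}$; from $\dim\mathrm{Fix}(a)=\dim\mathrm{Fix}(b)=\dim\mathrm{Fix}(ab)=1$ one gets $\dim V_{+-}=\dim V_{-+}=\dim V_{--}=1-\dim V_{++}$, hence $2n+1=3-2\dim V_{++}\le 3$, contradicting $n\ge 2$. If $p$ is odd, the nontrivial constituents of the complexified representation occur in conjugate pairs of even real dimension, so the trivial isotypic part $V^{(\mathbb{Z}/p)^2}$ has odd, hence (by the line bound) exactly $1$, dimension; then $(\mathbb{Z}/p)^2$ would act with no nonzero fixed vector on the $2n$-dimensional orthogonal complement of its fixed line, which is impossible since a finite abelian group acting linearly without nonzero fixed vectors is cyclic. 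Consequently every Sylow subgroup of $G$ is cyclic or generalized quaternion; by a standard structural fact (non-abelian simple groups have non-cyclic, non-quaternion Sylow $2$-subgroup) $G$ is not a non-abelian simple group, and it is not cyclic of prime order, so $G$ has a normal subgroup $N$ with $1\neq N\neq G$ — for instance a minimal normal subgroup, which is elementary abelian and hence, by Claim A, is $\mathbb{Z}/p$.

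By induction $V^N\neq 0$, so $V^N=\ell$ is a line; it is $G$-invariant because $N\trianglelefteq G$, and $G$ acts on it through a character $\varepsilon\colon G\to\{\pm1\}$. If $\varepsilon$ is trivial then $V^G\supseteq\ell$, a contradiction; otherwise $K:=\ker\varepsilon$ has index $2$ and contains $N$, so $K\neq 1$, and by induction $V^K\neq 0$; since $V^K\subseteq V^N=\ell$ we get $V^K=\ell$. Thus $K$ fixes $\ell$ pointwise (so $\mathrm{Fix}(k)=\ell$ for every $k\in K\setminus\{1\}$) and hence acts freely, i.e. without nonzero fixed vectors, on the $2n$-dimensional space $\ell^{\perp}$, while every $g\in G\setminus K$ satisfies $g|_\ell=-\mathrm{id}$, so $\mathrm{Fix}(g)\subseteq\ell^{\perp}$ is a line and $\det(g|_{\ell^{\perp}})=-1$. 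For any $k\in K$ the element $(gk)^2=(g^{-1}kg)k$ lies in $K$; if it were nontrivial it would act freely on $\ell^{\perp}$, yet $\det(gk|_{\ell^{\perp}})=-1$ forces $gk|_{\ell^{\perp}}$ to have $-1$ as an eigenvalue, whence $(gk|_{\ell^{\perp}})^2$ has $1$ as an eigenvalue — a contradiction. Hence $g^{-1}kg=k^{-1}$ for all $k\in K$, so $K$ is abelian, and being free on $\ell^{\perp}$ it is cyclic, say $K=C_m$; therefore $G=D_m$ is dihedral, with $m\ge 3$ (since $m\le 2$ would give $K=1$ or $(\mathbb{Z}/2)^2\le G$). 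Finally, decompose $\ell^{\perp}$ into irreducible $D_m$-modules: a $1$-dimensional summand would restrict to $C_m$ with a nontrivial kernel and so violate the freeness of $K$, so every summand is a $2$-dimensional dihedral representation, on each of which the involution $g$ acts as a reflection with a $1$-dimensional fixed line; hence $\dim\mathrm{Fix}(g|_{\ell^{\perp}})$ equals the number $n$ of summands, contradicting $\dim\mathrm{Fix}(g)\le 1$ since $n\ge 2$. This completes the induction.

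The step I expect to be the main obstacle is Claim A together with the passage from it to the existence of a proper nontrivial normal subgroup; this is exactly where the hypothesis $n\ge 2$ is used, and it must be, since for $n=1$ the statement is false ($G=A_5\subset SO(3)$, every nontrivial element a nontrivial rotation, has no common axis). Everything after that is bookkeeping with eigenvalues and the elementary representation theory of cyclic and dihedral groups.
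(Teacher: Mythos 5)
Your proof is correct, but it follows a genuinely different route from the paper's. The paper splits directly by the parity of $|G|$: for $|G|$ even it shows that the eigenvalue hypothesis forces a \emph{unique} involution $\sigma$ (two distinct involutions would have to agree on the intersection of their large $(-1)$-eigenspaces), so $\sigma$ is central, its axis $\ell$ is $G$-invariant, and each $g$ fixes $\ell$ pointwise because $g(v)=-v$ would force $g=\sigma$; for $|G|$ odd it establishes the $p^2$-condition, invokes Burnside's theorem (via Wolf) that a group with all Sylow subgroups cyclic is metacyclic with presentation $\langle A,B\mid A^m=B^n=1, BAB^{-1}=A^r\rangle$, and then runs an eigenvalue argument on the generators. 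You instead set up an induction on $|G|$, prove the $p^2$-condition for \emph{all} $p$ (including $p=2$, via an eigenspace count the paper does not need), reduce to a proper nontrivial normal subgroup $N$, apply the inductive hypothesis to $N$ and then to $K=\ker\varepsilon$, and finish by showing $G$ would be dihedral and deriving a contradiction from the representation theory of $D_m$. Your approach is structurally uniform (no parity split) and the dihedral finish is pretty; the price is that the reduction to a nontrivial normal subgroup invokes substantially heavier group theory — in particular Brauer--Suzuki for the generalized quaternion Sylow $2$ case, and as phrased also Feit--Thompson for the odd-order case (though the latter is avoidable since all Sylows are then cyclic and Burnside's metacyclic theorem already gives solvability). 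The paper's even-order case, by contrast, never goes near simplicity at all. Two small wording issues, neither a real gap: the displayed identity $(gk)^2=(g^{-1}kg)k$ presupposes $g^2=1$, which you only establish a sentence later (by taking $k=1$ in the very argument you are running — it is fine, just stated out of order); and ``a finite abelian group acting linearly without nonzero fixed vectors is cyclic'' should say ``acting \emph{freely}'' (no nontrivial element fixes a nonzero vector), since $V^H=0$ alone does not suffice — the freeness is what you actually established and what the classical fact requires.
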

\begin{proof}
The idea of the proof is similar to the classification of fixed
point free finite subgroups of the isometry group of
${\mathbb{S}}^{2n+1}$ in \cite{Wolf}. We divide our argument into
two cases.

Case i): $|G|$ is even. In this case, there is an element of order 2
by Cauchy theorem. We denote this element by $\sigma.$ We claim that
$\sigma$ is the unique element of order 2 in $G.$ Indeed, suppose
$\sigma'$ is another distinct order 2 element. Note that, by our
assumption, $\sigma$ and $\sigma'$ must have one eigenvalue equal to
1 and $2n$ eigenvalues equal to $-1.$ Let $E_1$ and $E_2$ be the
eigenspaces with eigenvalue $-1$ of $\sigma$ and $\sigma'$
respectively. Clearly $\sigma\sigma'^{-1}=1$ on $E_1\cap E_2.$ Since
$n\geq 2,$ the intersection $E_1\cap E_2$ has dimension $\geq
2n-1\geq 3,$ this implies that $\sigma=\sigma'$ on the whole space.
This is a contradiction.

By the uniqueness of $\sigma,$ we know that $g^{-1}\sigma g=\sigma$
for any $g\in G.$ Suppose $\sigma (v)=v$ for $|v|=1,$ then $\sigma
g(v)=g(v).$ Hence $g(v)=v$ or $g(v)=-v.$ We claim that $g(v)=-v$
cannot happen. The reason is as follows. Let $g(u)=u$ for $|u|=1$,
then $g^2(u)=u.$ By combining with $g^2 (v)=v,$ we know that either
$g^2=1$ or $v=\pm u.$ If $g(v)=-v,$ then $v$ cannot be $\pm u,$ so
$g$ has order 2, and equal to $\sigma$ by the uniqueness of order 2
element, this contradicts with $\sigma (v)=v$. So we have showed
that $g(v)=v$ for any $g\in G.$

Case ii): $|G|$ is odd. First, we show that every subgroup of order
$p^2$ ($p$ is a prime number)
 of $G$ is cyclic. Namely, we will show that $G$ satisfies
the $p^2$ condition.

Indeed, suppose $H$ is a noncyclic subgroup of order $p^2$ for some
prime number $p.$ Since a group of order $p^2$ with $p$ prime must
be abelian, we can apply the same argument as case (i) to conclude
that there is a unit vector $v$ fixed by the whole group. Let $W
\cong\mathbb{R}^{2n}$ be the orthogonal complement of $v$ in
$\mathbb{R}^{2n+1}.$ Then $H$ induces a fixed point free action on
the unit sphere $\mathbb{S}^{2n-1}$ of $W.$ So for any $v'\in
\mathbb{S}^{2n-1},$ we have $0=\sum_{g\in H} g(v'). $ On the other
hand, since $G$ is abelian and noncyclic, we conclude that each
nontrivial element has order exactly $p,$  the intersection of any
two distinct order $p$ groups contains only the identity. Let $H_i,$
$i=1,\cdots, m,$($m\geq 2$) be the subgroups in $H$ of order $p$,
then for any $v'\in \mathbb{S}^{2n-1},$
$$0=\sum_{g\in G} g(v')=\sum_{i}^{m}\sum_{g\in
H_i}g(v')-(m-1)v'=-(m-1)v', $$ where we have used the fact
$\sum_{g\in H_i}gv'=0$ since $H_i$ also acts freely on
$\mathbb{S}^{2n-1}.$ The contradiction shows that $H$ is cyclic.

The fact that $G$ satisfies $p^2$ condition implies that every Sylow
subgroup of $G$ is cyclic (see Theorem 5.3.2 in \cite{Wolf}, note
that since $|G|$ is odd, so must be $p$). By Burnside theorem (see
Theorem 5.4.1 in \cite{Wolf}) , once we know that every Sylow
subgroup of $G$ is cyclic, then $G$ is generated by two elements $A$
and $B$ with defining relations
\begin{equation*}
\begin{split}
& A^m=B^n=1,\ \  BAB^{-1}=A^{r}, \ \ |G|=mn;
\\
& ((r-1)n,m)=1, \ \ \  r^{n}\equiv 1 (mod\ \  m).
\end{split}
\end{equation*}
Let $A(v) = v$ for $|v|=1$. We will show that $B(v)=v.$ Indeed, by
the relation $BAB^{-1}=A^{r},$ we have $AB^{-1}(v)=B^{-1}(v).$ This
implies $B^{-1}(v)= v$ or $B^{-1}(v)= -v.$ $B^{-1}(v)= -v$ will not
happen, because it implies $B^{-2}=1$ by the argument in case i).
This will imply the order of the group is even, which is a
contradiction with our assumption.  So $v$ is fixed by the whole
group $G.$

\end{proof}
\begin{lemma} \label{lemma5.2}Let $G\subset O(2n+1)$($n\geq 2$) be a finite group
of orthogonal matrices such  that each nontrivial element in $G$ has
at most one eigenvalue equal to $1.$  Then there is a finite group
$G'\subset SO(2n)$ acting freely on the sphere $\mathbb{S}^{2n-1}$
and a character $\chi:G'\rightarrow \{\pm1\}$ such that after
conjugation, the group $G= \{ \left(
  \begin{array}{cc}
    \chi(g) & 0\\[1mm]
    0 & g
   \end{array}
\right): g\in G'\} .$
\end{lemma}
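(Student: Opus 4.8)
The plan is to reduce to Lemma~\ref{lemma5.1} by passing to the orientation-preserving subgroup $G^{+}:=G\cap SO(2n+1)$ and then bootstrapping the conclusion back up to all of $G$ by repeated eigenvalue-parity counts. First I would note that for a nontrivial $g\in SO(2n+1)$ the non-real eigenvalues occur in conjugate pairs and the $(-1)$-eigenspace is even-dimensional (because $\det g=1$), so in the odd dimension $2n+1$ the multiplicity of the eigenvalue $1$ is odd, hence exactly $1$ by the hypothesis on $G$. Thus every nontrivial element of $G^{+}$ has a one-dimensional fixed line, i.e. $G^{+}\subset SO(2n+1)$ with $n\ge2$ satisfies the hypothesis of Lemma~\ref{lemma5.1}, which supplies a unit vector $v$ fixed by all of $G^{+}$. (If $G^{+}=\{1\}$ this step is vacuous; that case is handled at the end.)

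Assuming $G^{+}\neq\{1\}$, the common fixed set $F=\{w:g(w)=w\ \text{for all }g\in G^{+}\}$ is contained in the one-dimensional fixed line of any single nontrivial element of $G^{+}$, and it contains $v$, so $F=\mathbb{R}v$. Since $G^{+}\trianglelefteq G$, the subspace $F$ is $G$-invariant, and as each $\tau\in G$ is orthogonal with $|v|=1$ we get $\tau(v)=\pm v$; write $\chi(\tau)=\pm1$ accordingly, a homomorphism $G\to\{\pm1\}$. Each $\tau$ then preserves $W:=v^{\perp}\cong\mathbb{R}^{2n}$, and I claim $\tau|_{W}\in SO(2n)$. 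Since $\det(\tau|_{W})=\chi(\tau)\det(\tau)$, it suffices to check $\chi=\det|_{G}$: for $\tau\in G^{+}$ this is immediate as $\tau(v)=v$, while if $\det\tau=-1$ but $\chi(\tau)=1$ then $\tau|_{W}\in O(2n)$ would have determinant $-1$, forcing by parity in even dimension an eigenvalue $1$ on $W$, so $\tau$ would have two independent $1$-eigenvectors ($v$ and one in $W$), contradicting the hypothesis. Hence $G':=\{\tau|_{W}:\tau\in G\}\subset SO(2n)$.

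Next I would verify freeness and injectivity of $\tau\mapsto\tau|_{W}$. If $\tau\neq I$ and $\tau|_{W}$ fixed some unit vector $w\in W$, then $\tau|_{W}\in SO(2n)$ would have a $1$-eigenspace of even dimension $\ge2$, so $\tau$ would have at least two independent $1$-eigenvectors, a contradiction; hence $G'$ acts freely on $\mathbb{S}^{2n-1}$. Likewise $\tau|_{W}=I$ would give $\tau$ at least $2n\ge4$ eigenvalues equal to $1$, so $\tau=I$; thus $\tau\mapsto\tau|_{W}$ is an isomorphism $G\xrightarrow{\ \sim\ }G'$, and $\chi$ descends through it to a character $\chi':G'\to\{\pm1\}$. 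In an orthonormal basis of $\mathbb{R}^{2n+1}$ beginning with $v$, the matrix of $\tau$ is $\bigl(\begin{smallmatrix}\chi(\tau)&0\\0&\tau|_{W}\end{smallmatrix}\bigr)$, so conjugating $G$ by the corresponding orthogonal change of basis puts it into the asserted form $\bigl\{\left(\begin{smallmatrix}\chi'(g)&0\\0&g\end{smallmatrix}\right):g\in G'\bigr\}$. It remains to dispatch $G^{+}=\{1\}$: then $\det$ embeds $G$ into $\{\pm1\}$, so $|G|\le2$, and a nontrivial element would be a determinant-$-1$ involution whose $1$-eigenspace, being even-dimensional but of size at most $1$, is trivial, i.e. the element is $-I$; for this $G$ one takes $v$ arbitrary, $G'=\{\pm I_{2n}\}\subset SO(2n)$ acting freely, and $\chi'(-I_{2n})=-1$.

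I expect the only genuine work to be the parity bookkeeping for the orientation-reversing elements — the identification $\chi=\det|_{G}$ that forces $\tau|_{W}$ into $SO(2n)$, together with the parallel freeness step. Both rest on two elementary facts about even-dimensional orthogonal maps: one of determinant $-1$ always has a nonzero fixed vector, and one of determinant $+1$ that fixes a nonzero vector in fact fixes a $2$-plane. In either situation the extra fixed vector $v$, or the fixed $2$-plane itself, exhibits a nontrivial element of $G$ with at least two eigenvalues equal to $1$, contradicting the standing hypothesis. Beyond these counts and the clean input of Lemma~\ref{lemma5.1}, nothing deeper is involved.
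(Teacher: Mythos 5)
Your proof is correct, and it follows the same overall scheme as the paper's: pass to $G_0 = G \cap SO(2n+1)$, invoke Lemma~\ref{lemma5.1} for a common fixed unit vector $v$, show every element of $G$ sends $v$ to $\pm v$, restrict to $v^{\perp}$, and check freeness and that the restriction lands in $SO(2n)$. The one place you genuinely diverge is the step establishing $g(v)=\pm v$ for orientation-reversing $g$. The paper works hands-on with $E=\mathrm{span}\{v,g(v)\}$: assuming $\dim E=2$, it exhibits the eigenvalue-$1$ vector $v+g(v)\in E$ and a second one in the odd-dimensional complement $E^{\perp}$ (since $\det(g|_{E^\perp})=1$), contradicting the hypothesis. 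You instead note that the common fixed set of $G_0$ is exactly the line $\mathbb{R}v$ (each nontrivial element of $G_0$ has a one-dimensional fixed line, so the intersection is at most a line, and contains $v$), that $G_0=\ker(\det|_G)$ is normal in $G$, and hence this line is $G$-invariant, giving $\tau(v)=\pm v$ at once. This is a bit cleaner than the span computation; the two arguments then converge at the same determinant-parity count that rules out $\tau(v)=v$ for $\det\tau=-1$ and that gives freeness. You also spell out the degenerate case $G_0=\{1\}$ (forcing $G\subset\{\pm I\}$) and the injectivity of the restriction $G\to G'$, both of which the paper leaves implicit but are needed for the stated conclusion. Overall: same structure, a tidier packaging of one intermediate step, and slightly more thorough bookkeeping.
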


\begin{proof}
Let $G_0=G\cap SO(2n+1).$ If $G_0=G,$ then from Lemma \ref{lemma5.1}
we are done by choosing $G' =$ the restriction of $G$ on the
orthogonal complement of $v$ (the common unit vector fixed by $G$)
and $\chi\equiv 1.$ If $G_0\neq G,$ then $G_0$ is an index 2 normal
subgroup of $G$. Since an element of $G_0$ must has 1 as its
eigenvalue, it has exactly one eigenvalue 1 by our assumption. By
Lemma \ref{lemma5.1}, there is a common unit vector $v$ fixed by
$G_0.$ For any $g\in G\backslash G_0,$ we claim $g(v)=-v.$ The
argument is as follows.

Since $g^2\in G_0,$ we have $g^2(v)=v.$ Let $E=\text{span}\{ v,
g(v)\}.$ We will show $\dim E=1.$  Indeed, suppose $\dim E=2.$ Since
$g(v+g(v))=v+g(v)$ and $g(v-g(v))=-(v-g(v)),$
$E$ is an invariant subspace of $g$, $\dim E^{\perp}$ is odd and
$\det(g \mid_{E^{\perp}})=1$. So $g$ has another fixed nonzero
vector in $E^{\perp}.$ This contradiction shows that $g(v)=v$ or
$g(v)=-v.$ If $g(v)=v,$ then $\det(g\mid_{\{v\}^{\perp}})=-1,$ and
hence $g$ must have another fixed vector in $\{v\}^{\perp}$ since
$\dim \{v\}^{\perp}$ is even, this again contradicts with the
assumption that $g$ has at most one eigenvalue 1. This proves our
claim.

Next, we show that $G$  acts freely on the unit sphere of
$\{v\}^{\perp}$. For this, we only need to check for any $g\in
G\backslash G_0,$ $g$ has no nonzero fixed vector in
$\{v\}^{\perp}.$ But if this is not true, we have $g^2=1,$ this
implies that $g$ has one eigenvalue 1 (by assumption) and $2n$
eigenvalues $-1,$ which contradicts with $\det(g)=-1.$ To finish the
proof, we only have to take $G'=$ the restriction of $G$ on
$\{v\}^{\perp}$ and $\chi$ is the character which takes value 1 on
$G_0$ and -1 otherwise.
\end{proof}
In the following, we prove Main Theorem by using  Theorem \ref{t2}
and Lemmas \ref{lemma5.1}and \ref{lemma5.2}.
\begin{proof} of Main Theorem.
With the help of the above lemmas, we can now describe the structure
of the spherical orbifolds $\mathbb{S}^{4}/\Gamma$ appearing in
Theorem \ref{t2}. Since the resulting quotient space
$\mathbb{S}^{4}/\Gamma$ has at most isolated singularities, each
element of $\Gamma$ has at most a pair of antipodal fixed points, so
the group $\Gamma$ satisfies the assumptions in Lemma
\ref{lemma5.2}. There are three cases for $\mathbb{S}^{4}/\Gamma.$
The first case is $\Gamma$ acts on $\mathbb{S}^{4}$ freely, the
resulting space is a smooth manifold diffeomorphic to $\mathbb{S}^4$
or $\mathbb{R}\mathbb{P}^4.$ The second case is $\Gamma\neq \{1\}$
and $\Gamma\subset SO(5).$ Assume $\mathbb{S}^{4}\subset
\mathbb{R}^5$ has equation $x_1^2+x_2^2+\cdots+x_5^5=1.$ By Lemma
\ref{lemma5.2},we may assume the north pole $P=(0,0,0,0,1)$ and
south pole $-P=(0,0,0,0,-1)$ of $\mathbb{S}^{4}$ are the fixed
points of $\Gamma.$ Let $\mathbb{S}^3=\mathbb{S}^{4}\cap \{x_5=0\},$
the group action fixes $x_5$ coordinate, so
$\mathbb{S}^{4}/\Gamma\backslash \{P,-P\}$ is diffeomorphic to
$\mathbb{S}^{3}/\Gamma\times (0,1).$ The third case is
$\Gamma_0=\Gamma\cap SO(5)\subsetneqq \Gamma.$ By Lemma
\ref{lemma5.2},we may also assume the north pole $P$ and south pole
$-P$ of $\mathbb{S}^{4}$ are the fixed points of $\Gamma_0.$ The
group $\Gamma$ can act on $\mathbb{S}^3\times \mathbb{R}$ in a
natural way, and we can equip a metric on
$\mathbb{S}^{4}/\Gamma\backslash \{P\},$ which is locally isometric
to $\mathbb{S}^3\times \mathbb{R}$ and the manifold has only one end
which is isometric to $\mathbb{S}^3/\Gamma_0\times [0,\infty).$

Let $X_1,\cdots, X_l$ be the orbifolds appearing in Theorem
\ref{t2}.  The orbifold connected sum procedure can be described in
 two steps, the first step is to resolve all singularities of $X_1, \cdots, X_l$
 which appear pairwise in the surgery procedures of the Ricci flow
 by orbifold connected sums,  the resulting spaces consists of
 finite number of smooth closed manifolds, denoted by $Y_1, \cdots
 Y_k.$ The next step is to perform the orbifold connected sums among these \textbf{manifolds}
 $Y_1, \cdots, Y_k$ and a finite number of $\mathbb{S}^4,$ $\mathbb{RP}^4.$
 Now we investigate the topology of each components $Y_i$ in the
 first step mentioned above. We remove all singularities from all $X_j,$
 the orbifolds falling in the second case give us necks $\mathbb{S}^3/\Gamma\times (0,1),$ the orbifolds in the third case
  give us caps of the form $C_{\Gamma_0}^{\sigma}.$  Since in the first
  step, each end of one such neck has to be joined with a cap, or another
  neck, producing a longer cap or neck; the end of each cap has to
  be joined with a neck or another cap.
So each $Y_i$ is
  diffeomorphic to either  $\mathbb{S}^4,$ $\mathbb{R}\mathbb{P}^4$
  or the manifold (denoted by $\mathbb{S}^3/\Gamma\times
[0,1]/f$ temporarily) obtained by gluing the boundaries of
$\mathbb{S}^3/\Gamma\times [0,1]$ by utilizing a diffeomorphism
$f:\mathbb{S}^3/\Gamma\times \{0\}\rightarrow
\mathbb{S}^3/\Gamma\times \{1\},$   or the manifold
$C_{\Gamma}^{\sigma}\cup_{f''} C_{\Gamma'}^{\sigma''}$ obtained by
gluing the boundaries  $\partial C_{\Gamma}^{\sigma}$ and $
\partial C_{\Gamma'}^{\sigma''}$ by a diffeomorphim $f''.$ It is known that two spherical three space forms are isometric if they
are diffeomorphic. So in the manifold $C_{\Gamma}^{\sigma}\cup_{f''}
C_{\Gamma'}^{\sigma''},$ the group $\Gamma$ and $\Gamma'$ are
conjugate (in $O(4)$). After a conjugation, we have
$\Gamma=\Gamma',$ and assume $C_{\Gamma}^{\sigma}\cup_{f''}
C_{\Gamma'}^{\sigma'}\cong C_{\Gamma}^{\sigma}\cup_{f'}
C_{\Gamma}^{\sigma'}.$ Since the diffeomorphism types of
$\mathbb{S}^3/\Gamma\times [0,1]/f$ and
$C_{\Gamma}^{\sigma}\cup_{f'} C_{\Gamma}^{\sigma'}$ remain unchanged
if we deform $f$ and $f'$ isotopically. Moreover, by\cite{Mc}, the
diffeomorphims $f,f'\in Diff(\mathbb{S}^3/\Gamma)$ are isotopic to
 isometries $I_f, I_f'\in ISO(\mathbb{S}^3/\Gamma).$
  So if we equip $\mathbb{S}^3/\Gamma\times [0,1]$ with the standard product metric,
   the induced metric on $\mathbb{S}^3/\Gamma\times
[0,1]/I_f$ is locally isometric to $\mathbb{S}^3\times
\mathbb{R}.$ Similarly, if we equip $C_{\Gamma}^{\sigma}$ and
$C_{\Gamma}^{\sigma'}$ the standard metric which is locally
isometric to $\mathbb{S}^3\times \mathbb{R}$ and the end is
isometric to product $\mathbb{S}^3/\Gamma\times [0,1),$ then the
induced metric on  $C_{\Gamma}^{\sigma}\cup_{I_f'}
C_{\Gamma}^{\sigma'}$ is locally isometric to
$\mathbb{S}^3/\Gamma\times \mathbb{R}.$  In both cases, the
universal covers are $\mathbb{S}^3\times \mathbb{R}.$ Now we have
shown that each $Y_i$ is diffeomorphic to either $\mathbb{S}^4,$
or $\mathbb{R}\mathbb{P}^4$ or
 $\mathbb{S}^3\times
\mathbb{R}/G,$ where $G$ is a fixed point free cocompact discrete
subgroup of the isometries of standard metric on
$\mathbb{S}^3\times \mathbb{R}.$ Therefore, the manifold $M$ is
diffeomorphic to an orbifold connected sum of $\mathbb{S}^4,$ or
$\mathbb{R}\mathbb{P}^4$ or
 $\mathbb{S}^3\times
\mathbb{R}/G.$ Note that doing orbifold connected sum through two
embedded 3-spheres on a connected smooth  manifold is equivalent
to doing the usual connected sum of this manifold with
$\mathbb{S}^3\times \mathbb{S}^1$ or
$\mathbb{S}^3\tilde{\times}\mathbb{S}^1,$ also, doing orbifold
connected sum between two connected smooth manifolds is just doing
the usual connected sum by suitably choosing the orientations of
the embedded 3-spheres. Therefore, we have shown the manifold $M$
is diffeomorphic to the usual connected sum of $\mathbb{S}^4,$ or
$\mathbb{R}\mathbb{P}^4$ or
 $\mathbb{S}^3\times
\mathbb{R}/G.$

\end{proof}

  \begin{corollary} \label{cor5.1}A compact 4-orbifold with at most isolated singularities  with
positive isotropic curvature is diffeomorphic to the connected sum
 $\#_{i}(\mathbb{S}^3\times
\mathbb{R}/G_i)\#_j (\mathbb{S}^4/\Gamma_j),$ where $G_i$ and
$\Gamma_j$ are standard group actions,  the connected sum is in
the usual sense.
  \end{corollary}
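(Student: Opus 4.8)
The plan is to obtain the statement from Theorem \ref{t2} together with the topological analysis already carried out in the proof of the Main Theorem. First, Theorem \ref{t2} gives a diffeomorphism of the given orbifold $M$ with an orbifold connected sum $\#_f(\mathbb{S}^4/\Gamma_1,\dots,\mathbb{S}^4/\Gamma_l)$ of spherical $4$-orbifolds, each with at most isolated singularities. As in the proof of the Main Theorem, since each factor $X_j=\mathbb{S}^4/\Gamma_j$ has at most isolated singularities, every nontrivial element of $\Gamma_j$ has at most one eigenvalue $1$, so Lemma \ref{lemma5.2} puts $\Gamma_j$, after conjugation, into the standard block form. Consequently each $X_j$ is one of: (i) a free quotient, hence $\mathbb{S}^4$ or $\mathbb{R}\mathbb{P}^4$; (ii) a quotient by a nontrivial $\Gamma_j\subset SO(5)$, which has exactly two cone points and with $X_j$ minus them a neck $\mathbb{S}^3/\Gamma_j\times(0,1)$; or (iii) a quotient with $\Gamma_j\cap SO(5)\subsetneq\Gamma_j$, which has exactly one cone point and with $X_j$ minus it a smooth cap $C_{\Gamma_j\cap SO(5)}^{\sigma_j}$.

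Next I would run the two-step description of the orbifold connected sum exactly as in the Main Theorem. In the first step one resolves the pairs of cone points that are glued to one another (this undoes the Ricci-flow surgeries): each such resolution replaces a pair of cone points by a smooth neck $\mathbb{S}^3/\Gamma\times(-1,1)$, and since every self-diffeomorphism of a $3$-dimensional spherical space form is isotopic to an isometry \cite{Mc}, the resulting glued necks and caps carry metrics locally isometric to $\mathbb{S}^3\times\mathbb{R}$. As before, the pieces all of whose cone points get resolved assemble into closed smooth manifolds, each diffeomorphic to $\mathbb{S}^4$, $\mathbb{R}\mathbb{P}^4$, or $\mathbb{S}^3\times\mathbb{R}/G_i$ for a cocompact fixed-point-free discrete subgroup $G_i$ of $ISO(\mathbb{S}^3\times\mathbb{R})$ (the two closed model pieces being the self-gluing of a cylinder $\mathbb{S}^3/\Gamma\times[-1,1]$ and the gluing $C_\Gamma^\sigma\cup_f C_\Gamma^{\sigma'}$ of two smooth caps). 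A factor $X_j$ none of whose cone points is a gluing point survives untouched as a standard spherical summand $\mathbb{S}^4/\Gamma_j$. The only remaining possibility is a type-(ii) factor $X_j$ exactly one of whose two cone points is resolved; then that resolution turns $X_j$ into an orbifold cap $C_{\Gamma_j}$ still carrying its leftover cone point, whose neck end is joined, through a (possibly empty) chain of necks, to another cap, and gluing $C_{\Gamma_j}$ to a second $C_{\Gamma'}$ or to a smooth cap $C_{\Gamma'}^{\sigma'}$ again produces, after absorbing the identifying diffeomorphism into an isometry by \cite{Mc}, a spherical orbifold $\mathbb{S}^4/\Gamma''$ with at most isolated singularities, which is then a summand.

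Finally, the second step consists of the ordinary connected sums, at smooth points, of the closed manifolds just produced, the $\mathbb{R}\mathbb{P}^4$'s and the surviving spherical orbifolds $\mathbb{S}^4/\Gamma_j$, together with the handle operations; as observed in the Main Theorem, an orbifold connected sum along two embedded $3$-spheres of a connected smooth piece equals the usual connected sum with $\mathbb{S}^3\times\mathbb{S}^1$ or $\mathbb{S}^3\widetilde{\times}\mathbb{S}^1$, each of which is of the form $\mathbb{S}^3\times\mathbb{R}/G$, while an orbifold connected sum between two smooth pieces at smooth points is an ordinary connected sum. Writing $\mathbb{S}^4=\mathbb{S}^4/\{1\}$ (which contributes nothing) and $\mathbb{R}\mathbb{P}^4=\mathbb{S}^4/\mathbb{Z}_2$, every summand is of the form $\mathbb{S}^3\times\mathbb{R}/G_i$ or $\mathbb{S}^4/\Gamma_j$ with standard group actions, and the connected sum is in the usual sense, which is the assertion. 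I expect the one delicate point to be the bookkeeping in the second paragraph for factors whose cone points are only partially resolved; it is handled by recognizing each such partially resolved piece, via the isotopy of diffeomorphisms of $3$-dimensional spherical space forms to isometries \cite{Mc}, as one of the standard spherical $4$-orbifolds already on our list.
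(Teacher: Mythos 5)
Your proposal is correct and follows essentially the same route as the paper: rerun the two-step decomposition from the proof of the Main Theorem, observe that the only new possibility is a chain whose end(s) carry an unresolved cone point (yielding a piece of the form $C_{\Gamma}^{\sigma}\cup_f C_{\Gamma}$ or $C_{\Gamma}\cup_f C_{\Gamma}$), and then use \cite{Mc} to isotope the gluing map to an isometry, which extends to a diffeomorphism of $C_\Gamma$ and so identifies the piece with $\mathbb{S}^4/\{\Gamma,\hat\sigma\}$ or $\mathbb{S}^4/\Gamma$. The paper states this reduction in three sentences; you spell out the same casework, with the one small imprecision that ``absorbing the identifying diffeomorphism into an isometry'' should be made explicit as the fact that an isometry of $\mathbb{S}^3/\Gamma$ extends over the cone point to a self-diffeomorphism of $C_\Gamma$.
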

\begin{proof}
By using the same proof of the Main Theorem, we only need to
consider those components which are diffeomorphic to either
$C_{\Gamma}^{\sigma}\cup_f C_{\Gamma},$ or $C_{\Gamma}\cup_f
C_{\Gamma}.$ Note that by \cite{Mc}, $f$ is isotopic to an
isometry $f'$ of $\mathbb{S}^3/\Gamma,$ which can be naturally
extended to a diffeomorphism of $C_{\Gamma}$ to itself.  This
gives a diffeomorphism from $\mathbb{S}^4/\{\Gamma,\hat{\sigma}\}$
or $\mathbb{S}^4/{\Gamma}$ to $C_{\Gamma}^{\sigma}\cup_{f'}
C_{\Gamma},$ or $C_{\Gamma}\cup_{f'} C_{\Gamma}.$
\end{proof}

\section{Appendix} Let $\varepsilon$ be a positive constant.
We call an open subset $N\subset X$ in an metric space \textbf{GH}
\textbf{$\varepsilon$-neck of radius $r$} if $r^{-1}N$ is
homeomorphic and Gromov-Hausdorff  $\varepsilon$-close to a neck
$S \times \mathbb{I}$ where $S$ is some Alexandrov space with
nonnegative curvature and without boundary and $diam(S)\leq
\frac{1}{\sqrt{\varepsilon}}$ and
$\mathbb{I}=(-\varepsilon^{-1},\varepsilon^{-1}).$ \vskip 0.3cm
  \begin{proposition}\label{p5.1}
    There exists a constant $\varepsilon_0=\varepsilon_0(n)>0$ such that for any
complete noncompact  $n-$ dimensional intrinsic  Alexandorv space
$X$ with nonnegative curvature,  there is a positive constant
$r_0>0$ and a compact set $K\subset X$ such that any GH
$\varepsilon$-neck of radius $r\leq r_0$ on $X$ with $\varepsilon
\leq \varepsilon_0$  must be contained in $K$ entirely.
  \end{proposition}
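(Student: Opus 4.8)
The plan is to argue by contradiction using a blow-up together with the Toponogov splitting theorem for Alexandrov spaces. Fix a base point $p\in X$ and choose the dimensional constant $\varepsilon_0=\varepsilon_0(n)$ small enough that, by Toponogov comparison, a metric space which is Gromov--Hausdorff $\varepsilon_0$-close to a product $S\times\mathbb{I}$ must contain a long near-geodesic whose comparison angles at interior points are close to $\pi$. Suppose the assertion fails for $X$. Testing it against the exhaustion $K_j=\bar B(p,j)$ with $r_0=1/j$, one obtains GH $\varepsilon_j$-necks $N_j\subset X$ with $\varepsilon_j\le\varepsilon_0$, radii $r_j\le 1/j\to 0$, and a point of $N_j$ at distance $\ge j$ from $p$; writing $m_j$ for the centre of the core cross-section $S_j\times\{0\}$ one gets $d(p,m_j)\to\infty$. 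Rescaling, $(X_j,m_j):=(r_j^{-1}X,m_j)$ is a sequence of pointed complete Alexandrov spaces of dimension $\le n$ with curvature $\ge 0$, so by Gromov precompactness a subsequence converges in the pointed Gromov--Hausdorff sense to a complete nonnegatively curved Alexandrov space $(X_\infty,m_\infty)$. Since $X$ is noncompact, hence unbounded with respect to every base point, an intermediate-value argument for the distance function produces points of $X_\infty$ at every distance from $m_\infty$, so $X_\infty$ is noncompact; and the images of $r_j^{-1}N_j$ converge to an open set $N_\infty\subset X_\infty$ which is GH $\varepsilon_*$-close to $S_*\times(-\varepsilon_*^{-1},\varepsilon_*^{-1})$ with $\varepsilon_*=\lim\varepsilon_j\le\varepsilon_0$.

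Next I would extract a line in $X_\infty$. The core curves of the $N_j$ are, in $X_j$, $\varepsilon_j$-close to geodesic segments of length $\ge 2\varepsilon_0^{-1}$ with interior comparison angles $>\pi-o(\varepsilon_j)$; moreover, since the necks are not confined to any compact subset of $X$, one side of $N_j$ opens onto an arbitrarily long region along which the near-product structure persists, because Toponogov comparison in a nonnegatively curved space propagates the straightness. Passing to the limit (and, when $\varepsilon_*>0$, using a further diagonal extraction, or equivalently first replacing $X$ by its asymptotic cone, where every scale is available) one obtains a complete geodesic line in $X_\infty$. By the Toponogov splitting theorem for Alexandrov spaces of nonnegative curvature, $X_\infty$ splits isometrically as $X_\infty=\mathbb{R}\times Y$ with $Y$ complete, nonnegatively curved, of dimension $\le n-1$; and a long segment $\varepsilon_*$-close to a straight line must run, up to a $C(n)\varepsilon_*$-error, along an $\mathbb{R}$-fibre, so $N_\infty$ is $C(n)\varepsilon_*$-close to $(\text{a bounded piece of }Y)\times\mathbb{I}$.

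Finally I would derive the contradiction from the structure of $\mathbb{R}\times Y$. The neck $N_\infty$ separates $X_\infty$, and both sides are unbounded --- the outer side because the necks $N_j$ escape to infinity in $X$, and the side toward $m_\infty$ because $p$ is pushed to infinity under the rescaling --- so $X_\infty=\mathbb{R}\times Y$ has at least two ends, which forces $Y$ to be \emph{compact}. But the radii $r_j\to 0$ yield, after rescaling and passing to the limit, GH $\varepsilon$-necks in $\mathbb{R}\times Y$ of arbitrarily small radius $r$, whose cross-sections are rescalings $r^{-1}Y$; combining the constraint $\mathrm{diam}(Y)/r\le\varepsilon^{-1/2}$ with the uniform lower bound $r\ge c(n)\,\mathrm{diam}(Y)\sqrt\varepsilon$ obtained from Toponogov comparison applied to these cross-sections, one is forced to $\mathrm{diam}(Y)=0$, i.e. $X_\infty=\mathbb{R}$. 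This contradicts $\dim X_\infty\ge 2$, which holds because the genuine neck $N_\infty$ (honestly homeomorphic to $S_*\times\mathbb{I}$) makes $X_\infty$ at least two-dimensional. Hence the proposition holds, with $r_0$ any radius below the threshold produced by the compactness argument and $K$ the resulting compact set. The main obstacle is the middle step: upgrading the family of long ``near-necks'' $N_j$ to an honest line in the collapsed limit $X_\infty$ when $\varepsilon_j\not\to 0$, and tracking the two-endedness (hence compactness of the cross-section $Y$) through a Gromov--Hausdorff convergence that may collapse dimension, so that the smooth tools of Cheeger--Gromoll soul theory must be replaced by their Alexandrov counterparts (Perelman's stability and the Alexandrov splitting theorem).
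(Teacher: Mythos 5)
Your approach is genuinely different from the paper's, but it has gaps that I believe are fatal, and the most important one is structural rather than technical.

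The paper works entirely at the \emph{original} scale. It connects a base point $P$ to the centers $P_k$ of the escaping necks by minimizing geodesics, uses Toponogov comparison to extract a single geodesic ray $\gamma$ that threads through all but finitely many $N_k$, builds the Busemann function $\varphi$ of $\gamma$, and then shows via a second Toponogov comparison (with auxiliary rays $\gamma^k$ through near-central points $q_k$) that the diameter of $\varphi^{-1}(\varphi(p_k))\cap N_k$ is non-decreasing in $k$. Since the necks have radius $r_k\to 0$ these cross-sectional diameters must tend to $0$, giving the contradiction. No rescaling, no Gromov--Hausdorff limit, no splitting theorem.

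Your blow-up discards precisely the information the paper exploits. After rescaling by $r_j^{-1}$ every neck is normalized to radius $1$, so the limit $(X_\infty,m_\infty)$ no longer remembers that $r_j\to 0$. A space of the form $\mathbb{R}\times Y$ with $Y$ a nontrivial compact nonnegatively curved Alexandrov space is a perfectly legitimate blow-up limit containing a radius-one neck, and nothing in your construction rules it out: the ``constraint'' $\mathrm{diam}(Y)/r\leq \varepsilon^{-1/2}$ and your ``lower bound'' $r\geq c(n)\,\mathrm{diam}(Y)\sqrt{\varepsilon}$ are (up to the constant) the same inequality, so combining them forces nothing, and there is no way to manufacture necks of arbitrarily small radius inside $X_\infty$ once you have normalized to radius one. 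The contradiction you want lives at the \emph{unrescaled} scale, which is exactly where the paper stays.

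There is also a real gap earlier: the limiting neck $N_\infty$ has bounded length $\approx 2\varepsilon_*^{-1}$, and a nonnegatively curved Alexandrov space containing a near-product region of bounded length need not contain a line --- think of a cone with small aperture, which is exactly the example the paper's own remark cites. Your assertion that ``the near-product structure persists'' beyond $N_j$ because the necks escape to infinity is not supported: escaping necks tell you there are \emph{other} necks far away, not that a given neck extends. You flag this yourself as ``the main obstacle,'' but the diagonal extraction / asymptotic-cone suggestion does not repair it, since the asymptotic cone sees yet another scale and does not inherit the necks. Finally, in a possibly collapsed GH limit you cannot assert that $N_\infty$ is ``honestly homeomorphic'' to $S_*\times\mathbb{I}$ or that $\dim X_\infty\geq 2$; GH closeness does not preserve topology or dimension without a noncollapsing hypothesis you have not established. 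In short, the Toponogov tools you invoke are the right ones, but they should be applied in $X$ itself along a ray through the necks, as the paper does, not in a blow-up limit where the relevant small quantity has been normalized away.
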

\begin{proof} When the space is smooth manifold, and the topology defining the $\varepsilon-$ neck
is in $C^{[\frac{1}{\varepsilon}]},$ the proof is given by
\cite{CZ05F}. Now we modify the arguments to the present
situation, the key observation is that we essentially  used only
triangle comparison in \cite{CZ05F}. Here we include the proof for
completeness.

  We argue by contradiction. Suppose there exists a
sequence of positive constants $\varepsilon^\alpha \rightarrow 0$
and a sequence of $n$-dimensional complete noncompact pointed
Alexandrov space $(X^\alpha,P^{\alpha})$ with nonnegative
curvature such that for each fixed $\alpha$, there exists a
sequence of GH $\varepsilon^\alpha$-necks $N_{k}$ of radius
$r_k\leq 1/k$ on $X^\alpha,$ and $N_{k}\subset X\backslash
B(P^{\alpha},k).$ Recall that by the definition of
Gromov-Hausdorff distance, there is metric space $Z_k$ containing
isometric embedding s of $r_k^{-1}N_{k}$ and $S \times \mathbb{I}$
such that $S \times \mathbb{I}\subset
B_{\varepsilon^{\alpha}}(r_kN_{k})$ and $r_k^{-1}N_{k} \subset
B_{\varepsilon^{\alpha}}(S \times \mathbb{I}).$ Let $P_{k}\in
r_k^{-1}N_k$ be a point having distance $\leq
\varepsilon^{\alpha}$ with $S \times \{0\}$ (in $Z_k$). Then we
have $d(P^{\alpha},P_{k})\rightarrow \infty$ as $k\rightarrow
\infty.$

 Let $\alpha$ to be fixed and
sufficiently large. Connecting each $P_k$ to $P^{\alpha}$ by a
minimizing geodesic $\gamma_k,$ passing to subsequence, we may
assume the angle $\theta_{kl}$ between geodesic $\gamma_k$ and
$\gamma_l$ at $P^{\alpha}$ is very small and tends to zero as $k,
l\rightarrow+\infty$, and the length of $\gamma_{k+1}$ is much
bigger than the length of $\gamma_k$.  Let us connect $P_k$ to
$P_l$ by a minimizing geodesic $\eta_{kl}$.

For any three points $A,B,C\in X,$ we use $\bar{\Delta}
_{\bar{A}\bar{B}\bar{C}}$ to denote corresponding triangle in
plane $\mathbb{P}$ with
$d(A,B)=|\bar{A}\bar{B}|,d(A,C)=|\bar{A}\bar{C}|,d(B,C)=|\bar{B}\bar{C}|,$
and we also use $\bar{\angle}\bar{A}\bar{B}\bar{C}$ to denote the
angle of $\bar{\Delta} _{\bar{A}\bar{B}\bar{C}}$ at $\bar{B}.$

Clearly,$\bar{\angle}\bar{P^{\alpha}}\bar{P_{k}}\bar{P_l}$ is
close to $\pi$ by comparison. Let $P_{k}'\in \gamma_k \cap
\partial N_k$ and $P_{k}''\in \eta_{kl} \cap
\partial N_k$ then it is clear for any point $x\in \partial
N_k,$ we have either $\bar{\angle}\bar{P_k'}\bar{P_k}\bar{x}$ is
small and $\bar{\angle}\bar{P_k''}\bar{P_k}\bar{x}$ is close to
$\pi,$ or $\bar{\angle}\bar{P_k'}\bar{P_k}\bar{x}$ is close to
$\pi$ and $\bar{\angle}\bar{P_k''}\bar{P_k}\bar{x}$ is small. This
depends on $\bar{x}$ lies which  connected component of $\partial
N_k.$

 By using the above facts and triangle comparison (see \cite{CZ05F}),  we can show that as $k$ large
enough, each minimizing geodesic $\gamma_l$ with $l>k$, connecting
$P^{\alpha}$ to $P_l$, must go through the whole $N_k$.

Hence by taking a limit, we get a geodesic ray $\gamma$ emanating
from $P$ which passes through all the necks $N_k$, $k = 1, 2,
\cdots,$  except a finite number of them. Throwing these finite
number of necks, we may assume $\gamma$ passes through all necks
$N_k$, $k=1,2,\cdots.$ Denote the center sphere of $N_k$ by $S_k$,
and their intersection points with $\gamma$ by  $p_{k}\in S_k\cap
\gamma$, for $k=1,2,\cdots.$

Take a sequence points  $\gamma(m)$ with $m=1,2,\cdots.$ For each
fixed neck $N_k$, arbitrarily choose a point $q_{k}\in N_k$ near
the center sphere $S_k$, draw a geodesic segment $\gamma^{km}$
from $q_{k}$ to $\gamma(m)$. Now we can show by triangle
comparison that for any fixed neck $N_l$ with $l>k$, $\gamma^{km}$
will pass through $N_l$ for all sufficiently large $m$.

 For any $s>0$, choose
two points ${\tilde{p}_{k}}$ on $\overline{p_{k}\gamma(m)}\subset
\gamma $ and ${\tilde{q}_{k}}$ on
$\overline{q_{k}\gamma(m)}\subset \gamma^{km}$ with
$d(p_{k},{\tilde{p}_{k}})=d(q_{k},{\tilde{q}_{k}})=s$. By
Toponogov comparison theorem, we have

$$
\lim\limits_{m\rightarrow
\infty}\frac{d({\tilde{p}_{k}},{\tilde{q}_{k}})}{d(p_{k},q_{k})}\geq
1.
$$

Letting $m\rightarrow \infty$, we see that $\gamma^{km}$ has a
convergent subsequence whose limit $\gamma^{k}$ is a geodesic ray
passing through all $N_l$ with $l > k$. Denote by
$p_{j}=\gamma(t_j), j=1, 2, \cdots$. From the above computation,
we deduce that
$$d(p_{k},q_{k})\leq d(\gamma(t_k+s),\gamma^{k}(s)).
$$
for all $s>0$.

Let $\varphi(x)=\lim_{t\rightarrow+\infty}(t-d(x,\gamma(t)))$ be
the Busemann function constructed from the ray $\gamma$.  By the
definition of Busemann function $\varphi$ associated to the ray
$\gamma$, we see that
$\varphi(\gamma^{k}(s_1))-\varphi(\gamma^{k}(s_2))=s_1-s_2$ for
any $s_1$, $s_2\geq0$.   Consequently, by investigating the value
of $\varphi$ on $\partial N_l$ and linearality of
$\varphi\mid_{\gamma^k},$ we know for each $l>k$, we have
$\gamma^{k}(t_l-t_k)\in \varphi^{-1}(\varphi(p_{l}))\cap N_{l}.$
This implies that the diameter of
$\varphi^{-1}(\varphi(p_{k}))\cap N_k$ is not greater the diameter
of $\varphi^{-1}(\varphi(p_{l}))\cap N_l$ for any $l>k$, which is
a contradiction as $l$ much larger than $k$. The proposition is
proved.

\end{proof}
\begin{remark} Without introducing a compact set $K,$ the
conclusion of Proposition \ref{p5.1} may not be true. The counter
examples can be given by cones with small aperture.
\end{remark}


\end{document}